\newtheorem{theorem}{Theorem}[chapter]
\newtheorem{proposition}[theorem]{ Proposition} 
\newtheorem{lemma}[theorem]{ Lemma}
\newtheorem{corollary}[theorem]{Corollary}
\newtheorem{definition}[theorem]{Definition}
\theoremstyle{remark}
\newtheorem{example}[theorem]{\it Example}
 \def \1{\mathbb {1}}
\def \R{\mathbb {R}}
\def \RM{\mathbb {R}}
\def \NM{\mathbb{N}}
\def \ZM{\mathbb{Z}}
\def \CM{\mathbb{C}}
\def \QM{\mathbb{Q}}
\def \Ker {{\rm Ker\,}}
\def \Spec {{\rm Spec\,}}
\def \Der {{\rm Der\,}}
\def \Ham {{\rm Ham\,}}
\def \Symp {{\rm Symp\,}}
\def \p {{\rm exp\,}}
\def \Id {{\rm Id\,}}
\def \d{\partial}
\def\dt{\delta} 
\def\a{\alpha}
\def\b{\beta}
\def\e{\varepsilon}  
\def\g{\gamma}
\def\l{\lambda}
\def\L{\Lambda}
\def\p{\varphi}
\def \s{\sigma}
\def \t{\tilde}
\def \to{\longrightarrow} 
\def \w{\wedge}
\def \alg{\mathfrak{g}}
\def\del{\nabla}
\def \< {{\langle }}
\def \> {{\rangle }}
\def \( {\left( }
\def \) {\right) }
\newcommand{\Ct}{{\mathcal C}}
\newcommand{\Jt}{{\mathcal J}}
\newcommand{\Lt}{{\mathcal L}}
\newcommand{\Pt}{{\mathcal P}}
\newcommand{\lra}{\longrightarrow}
\renewcommand{\mod}{{\rm  mod\,}}
\title[Th\'eorie KAM]{{\sc  Th\'eorie KAM}}
\author{ Mauricio  Garay }
\begin{document}
 \thispagestyle{empty}
 \begin{center}{\LARGE  \bf KAM THEORY\\} 
  \vskip0.8cm { \large \sc M. Garay and D. van Straten}\\
  \vskip0.5cm
  \rule{0.3\textwidth}{0.5pt}\\
    \vskip1cm
    
 { \large \sc  \textbf{I \\ \vskip0.5cm GROUP ACTIONS AND THE KAM PROBLEM}}
 
 \vskip1cm
  \begin{figure}[ht]
  \begin{minipage}[b]{0.6\linewidth}   
  \includegraphics[height=0.6\linewidth,width=0.75\linewidth]{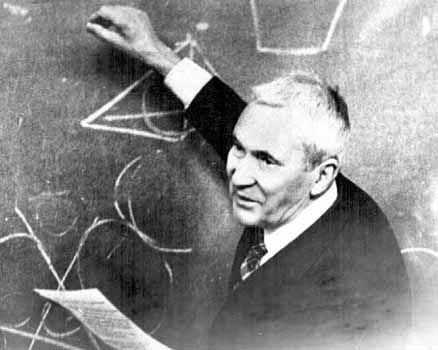}
 
  \end{minipage}
   \begin{minipage}[b]{0.60\linewidth}
     \includegraphics[height=0.42\linewidth,width=0.75\linewidth]{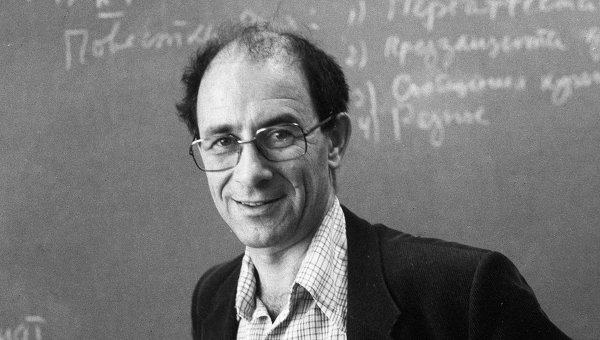}

             \end{minipage}\hfill
         \begin{minipage}[b]{0.40\linewidth}
           \includegraphics[height=0.9\linewidth,width=0.6\linewidth]{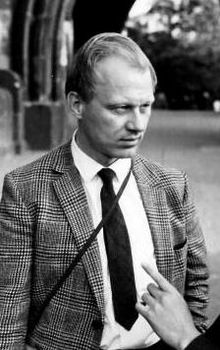}
     \end{minipage}
  \end{figure}

  \end{center}

\newpage
\ \\
\thispagestyle{empty}

\clearpage
\thispagestyle{empty}
\ \\
\vskip5cm
 {\em This book originated from courses taught by the first author 
 in 2011 at the Dijon University (France) and in 2013 at the University of Ouargla (Algeria) on the Herman invariant tori conjecture. 
 
 The first author thanks M. Bahayou and A. Zeglaoui for the invitation in Ouargla and R. Uribe and A. Dubouloz
 for the invitation in Dijon. Thanks also to the audience and in particular to   N. A'Campo, P. Cartier, Z. Fernane, A. Kessi, F. Laudenbach,
  D. Smai and N. Yousfi.}
 \newpage
 \cleardoublepage
\thispagestyle{empty}
\ \\
\vskip5cm
\begin{flushright}
{\em  To the memory of V.I. Arnold (1937-2010).}
\end{flushright}
\include{introduction}
\tableofcontents

\chapter{ Darboux theorems and action-angle variables}
We review classical mechanics in its symplectic formulation and sketch proofs of the basic results 
like the existence of action-angle coordinates. Some familiarity with the language of global analysis 
is assumed. 

\section{Hamiltonian vector fields}
A real $C^{\infty}$ manifold $M$ is called {\em symplectic\index{symplectic manifold}} if it is endowed with a closed $2$-form $\omega$ which induces
an  isomorphism between the tangent and cotangent bundles:
   $$TM \to T^*M,\ X \mapsto i_X \omega. $$
Here $i_X$ denotes the interior product of a differential form with a vector field $X$, that is:
   $$i_X\omega(Y)=\omega(X,Y). $$
This isomorphism associates to each differential $1$-form a unique vector field, which we call 
{\em symplectically associated} to it. Given a function
\[H: M \to \RM, \]
the vector field $X_H$ associated to the $1$-form $dH$ is called the {\em Hamiltonian vector field of $H$}.
  
 The space $\RM^{2n}$, equipped with the $2$-form 
 $$\omega:=\sum_{i=1}^n dq_i \w dp_i,$$
gives a basic example of a symplectic manifold.
The isomorphism between tangent and cotangent bundles is  given by
    $$\d_{q_i} \mapsto dp_i,\ \d_{p_i} \mapsto -dq_i. $$
In this way we recover the classical definition of the Hamiltonian vector field
  $$X_H:=\sum_{i=1}^n(\d_{p_i} H \d_{q_i}-\d_{q_i} H \d_{p_i}),$$
as it is symplectically associated to $dH$. Its integral curves are the solutions to  {\em Hamilton's canonical 
equations of motion}:
  $$\left\{ \begin{matrix}\dot q_i&=& \d_{p_i} H \\
  \dot p_i&=& -\d_{q_i} H\;\;. \end{matrix} \right. $$

A smooth map $\p:M \to M'$ between two symplectic manifolds $(M,\omega)$ and
$(M',\omega')$ is called {\em symplectic}\index{symplectic map}, if it preserves the symplectic forms: 
\[\p^*(\omega')=\omega .\]
By a {\em symplectomorphism}\index{symplectomorphism} we will mean a symplectic
diffeomorphism. A symplectomorphism 
  $$\p:M \to M $$
maps the Hamiltonian vector field of $H \circ \p$ to that of $H$.  Therefore the qualitative behaviour of a dynamical system only depends on the orbit of Hamiltonian functions under the group of symplectomorphism. 
 
If one can integrate the Hamiltonian vector field $X_H$ up to time $t$, we obtain a {\em flow}\index{flow of a vector field} 
\[\p_t:=\p^t_{X_H}:M \to M,\;\;\;\p_0=Id_M,\;\;\;\p_t \circ \p_s=\p_{t+s},\]
which is a one parameter family of symplectomorphism depending on $t$. To show this, recall the formula
$$\frac{d}{dt}(\p_t^* \omega)=\p_t^*(\Lt_{X_H}\omega) $$
where $\Lt_\cdot$ stands for the Lie derivative. Applying Cartan's formula\index{Cartan's formula} 
\[ \Lt_X=di_X+i_X d\] 
we find:
$$ \Lt_{X_H}\omega=di_{X_H}\omega+i_{X_H}d\omega=ddH+0=0.$$
Hence $\p_t^* \omega$ is constant in $t$, and as $\p_0=Id_M$ we find $\p_t^* \omega =\omega$.

So any function $H$ gives rise to a differential $dH$ and therefore to a vector field which preserves the symplectic form, but the converse is not always true. Vector fields preserving the symplectic form 
will be called {\em symplectic vector fields}\index{symplectic vector field}. 
These are usually called {\em Hamiltonian vector fields},
 but we wish to distinguish them from the vector fields defined by Hamiltonian functions 
(which are usually called {\em exact Hamiltonian vector fields}). So we use the name Hamiltonian vector fields\index{Hamiltonian vector field} for the vector fields defined by a Hamiltonian function.

\begin{proposition} In a symplectic manifold the closed one-forms are symplectically associated to symplectic vector fields.
\end{proposition}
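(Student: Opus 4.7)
The plan is to characterize when a vector field $X$ is symplectic directly in terms of the one-form $i_X\omega$ to which it is associated. By the definition of the symplectic pairing $TM \to T^*M$, $X \mapsto i_X\omega$, every one-form arises from a unique vector field, so the statement amounts to showing the equivalence: $X$ preserves $\omega$ if and only if $i_X\omega$ is closed.

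First I would compute the Lie derivative of $\omega$ along an arbitrary vector field $X$ using Cartan's formula, exactly as was done above for exact Hamiltonian vector fields:
\[ \Lt_X \omega = d\, i_X \omega + i_X \, d\omega. \]
Since $\omega$ is closed by the definition of a symplectic manifold, the second term vanishes identically, leaving $\Lt_X \omega = d(i_X\omega)$.

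From this identity both directions of the equivalence are immediate. If $X$ is symplectic, then $\Lt_X\omega = 0$, so $d(i_X\omega) = 0$, meaning $i_X\omega$ is closed. Conversely, if $\alpha$ is a closed one-form and $X$ is the unique vector field with $i_X\omega = \alpha$, then $\Lt_X\omega = d\alpha = 0$, so $X$ preserves $\omega$. Since the correspondence $X \mapsto i_X\omega$ is a bijection, this gives the desired statement.

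There is essentially no obstacle here: the content of the proposition is already contained in Cartan's formula combined with the closedness of $\omega$, and the only subtlety is to make explicit that the map $X \mapsto i_X\omega$ restricts to a bijection between symplectic vector fields and closed one-forms (rather than merely sending one set into the other).
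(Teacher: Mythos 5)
Your proof is correct and is essentially identical to the paper's: both apply Cartan's formula $\Lt_X\omega = d\,i_X\omega + i_X\,d\omega$, use closedness of $\omega$ to kill the second term, and read off the equivalence between $X$ being symplectic and $i_X\omega$ being closed. Your extra remark that the correspondence is a bijection is a harmless (and accurate) amplification of what the paper leaves implicit.
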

\begin{proof}
 A vector field $X$ is symplectic if and only if
 $$\Lt_X\omega=0. $$
 By Cartan's formula
 $$\Lt_X\omega=di_X\omega+i_Xd\omega=di_X\omega. $$
 Therefore $i_X \omega$ is a closed 1-form if and only if $X$ is symplectic. 
\end{proof}

We denote by $\Omega^1(M)$ the vector space of $C^{\infty}$ real valued $1$-forms on $M$ and let \index{$1$-forms, closed and exact}
\[ \Omega^1_{closed}(M):=\{ \eta \;|\;d\eta=0\} \supset  \Omega^1_{exact}(M):=\{dH\;|\;H \in C^{\infty}(M)\}\]
be the sub-spaces of closed and exact $1$-forms. 
The exterior derivative is a map
$$ C^{\infty}(M) \to \Omega_{closed}^1(M)$$
which has the space of locally constant functions $H^0(M)$ as kernel, and the
de Rham cohomology space \index{De Rham cohomology}  $H^1(M)$ as cokernel, so
we obtain an exact sequence of vector spaces
\[ 0 \to \Omega^1_{exact}(M) \to \Omega^1_{closed}(M) \to H^1(M) \to 0 .\]
Looking at the symplectically associated spaces of vector fields, the above
exact sequence is converted into the exact sequence
\[0 \to \Ham(M) \to \Symp(M) \to H^1(M) \to 0,\]
where $\Ham(M)$ denotes the vector space of Hamiltonian vector fields and 
$\Symp(M)$ the space of vector fields which preserve the symplectic form.
So we see that the difference between these two types of vector fields has a 
simple cohomological interpretation.

\begin{example}\label{E::torus}
Consider the $n$-dimensional torus $(\RM/\ZM)^n$; its points are described by
$n$ angular coordinates $\theta_1,\dots,\theta_n$. These $\theta_i$ are multivalued functions, but their differentials $\a_i:=d\theta_i$ are well-defined closed
$1$-forms on the torus.
We consider on the space $M:=(\RM/\ZM)^n \times \RM^n$ the symplectic form
$$\omega=\sum_{i=1}^n d\theta_i \w dp_i. $$
where the $p_i$ are coordinates on the $\RM^n$-factor.
The cohomology classes $[\a_i], i=1,2,\ldots,n$
define a basis of the de Rham cohomology space
$$H^1(M) =\oplus_{i=1}^n \RM[\a_i] \approx \RM^n.$$
These forms are associated to the symplectic vector fields:
$$X_1=\d_{p_1},\dots,X_n=\d_{p_n}. $$
Therefore any symplectic vector field on $M$ is of the form
$$\sum_{i=1}^n a_i\d_{p_i}+X_H $$
where 
$$H: M \to \RM $$
is a $C^\infty$ function and $a_1,\dots,a_n \in \RM$.  
\end{example}

\section{Cotangent spaces}
The cotangent space $M=T^*L$ of a manifold $L$ naturally inherits a symplectic form.
 The points of this manifold are pairs $(q,p)$, where $p \in T^*_qL$ is a covector 
at a point $q \in L$. The derivative of the bundle projection
$$\pi: M \to L $$
is a map
$$d \pi:TM \to TL .$$
The {\em action one-form}\index{action form on cotangent bundles} $\a \in \Omega^1(M)$ is defined by the formula
$$\a_{q,p}(\xi)=p(d \pi_{q,p} (\xi)),\ \ \xi \in T_{q,p}M. $$
Then the exact two form:
\[ \omega:=d\a\]
is non-degenerate and is called the {\em canonical symplectic form}\index{canonical symplectic form on cotangent bundles} on $T^*L$.\\ 

 \vskip0.3cm  \begin{figure}[h!]
 \includegraphics[width=12cm]{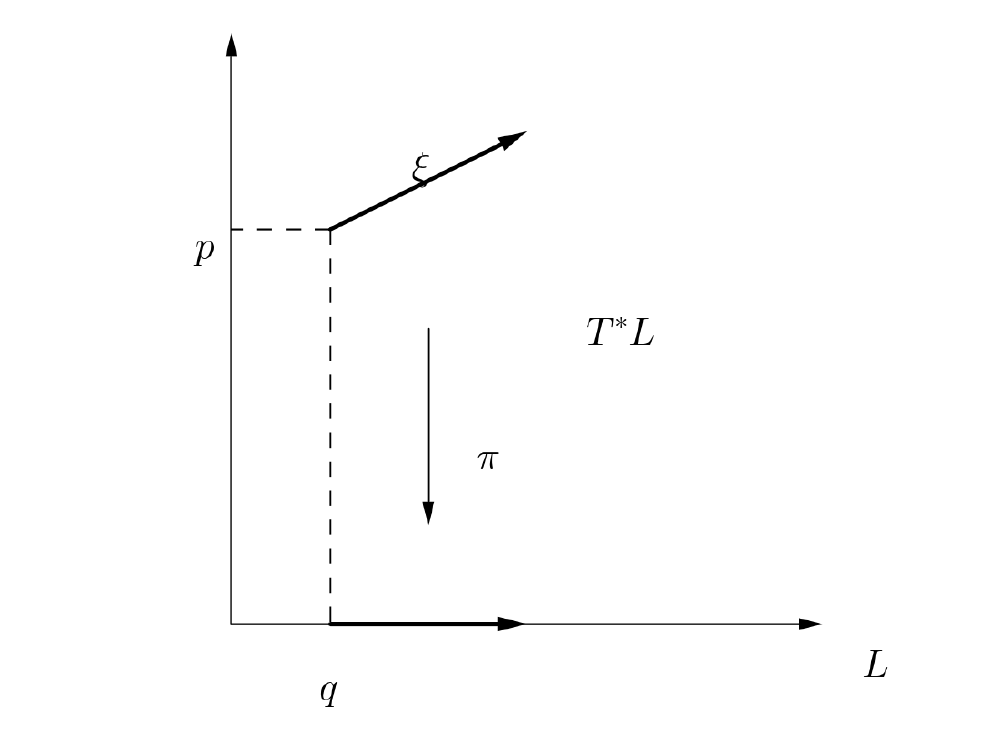}
\end{figure} \vskip0.3cm  

Let us analyse this construction for $L=\RM$. In this case, the cotangent bundle can be 
identified with the projection
$$\RM^{2} \to \RM,(q,p)\mapsto q.$$
If $\xi=(\xi_1,\xi_2) \in T_{(q,p)}\RM^2$ is a tangent vector, then
$$d\pi_{q,p} (\xi)=\xi_1 $$
so the action form is defined by
$$\a_{q,p}(\xi)=p\,\xi_1. $$
This means that 
$$\a=pdq$$ 
and therefore $\omega=dp  \wedge dq$.
Similarly, for the space $\RM^{2n}$, viewed as $T^*\RM^n$, we recover the 
symplectic form defined previously, up to a sign and in Example~\ref{E::torus} 
we were in fact dealing with the cotangent space to the $n$-torus $(\RM/\ZM)^n$.

Note that the canonical symplectic form vanishes on the zero-section of $T^*L$. More generally,
an $n$-dimensional submanifold $L$ of a $2n$ dimensional symplectic manifold $(M,\omega)$ 
is called {\em Lagrangian\index{Lagrangian manifold}} if $\omega_{|L}=0$. In such a case
the class of $\omega$ vanishes inside the de Rham cohomology group $H^2(L)$. In any neighbourhood
of $L$ which retracts onto $L$, the symplectic form is exact. A form $\a$ such that 
$$d\a=\omega $$ is called an
{\em action form \index{action form}}.
 
\begin{proposition}
\label{P::generating}
 Consider the symplectic space $\RM^{2n}$ with coordinates $q,p$ and symplectic form
$$\omega=\sum_{i=1}^n dq_i \w dp_i. $$ Let $L \subset \RM^{2n}$ be a Lagrangian manifold given as the graph
of a map 
$$f=(f_1,\dots,f_n):\RM^n \to \RM^n$$ over the $q$-space. Then there exists a function 
$$S:\RM^n \to \RM$$ 
such that
$$f=\del S=(\d_{q_1}S,\d_{q_2}S,\dots,\d_{q_n} S). $$
Conversely, any such graph is Lagrangian.
\end{proposition}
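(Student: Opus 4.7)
The plan is to parametrize $L$ by the map $\p:\RM^n \to \RM^{2n}$, $q \mapsto (q,f(q))$, and translate the Lagrangian condition $\p^*\omega=0$ into a simple PDE on $f$. Since $\p^*dq_i=dq_i$ and $\p^*dp_i=df_i=\sum_j \d_{q_j}f_i\, dq_j$, we compute
\[
\p^*\omega \;=\; \sum_{i=1}^n dq_i\w df_i \;=\; \sum_{i<j}\bigl(\d_{q_i}f_j-\d_{q_j}f_i\bigr)\, dq_i\w dq_j .
\]
Hence the Lagrangian condition is equivalent to the symmetry relations $\d_{q_i}f_j=\d_{q_j}f_i$ for all $i,j$, which is precisely the statement that the $1$-form
\[
\eta \;:=\; \sum_{i=1}^n f_i\, dq_i \;\in\; \Omega^1(\RM^n)
\]
is closed.

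The next step is to produce $S$ as a primitive of $\eta$. Since $\RM^n$ is contractible, Poincar\'e's lemma provides a smooth function $S:\RM^n \to \RM$ with $dS=\eta$, which written in coordinates reads $\d_{q_i}S=f_i$, i.e.\ $f=\del S$. Concretely one can take the explicit formula
\[
S(q) \;=\; \int_0^1 \sum_{i=1}^n f_i(tq)\, q_i\, dt ,
\]
and verify $\d_{q_i}S=f_i$ by differentiation under the integral sign together with the symmetry relations.

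For the converse, assume $f=\del S$. Then $\d_{q_j}f_i=\d_{q_j}\d_{q_i}S=\d_{q_i}\d_{q_j}S=\d_{q_i}f_j$ by equality of mixed partials, so the computation of $\p^*\omega$ above shows $\p^*\omega=0$, i.e.\ $L$ is Lagrangian.

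There is no real obstacle here: the only substantive input is Poincar\'e's lemma, which is immediate on $\RM^n$. The bulk of the proof is the coordinate computation of $\p^*\omega$ and the observation that the Lagrangian condition on a graph is exactly the symmetry of the Jacobian of $f$, which by the symmetry of second partials is equivalent to $f$ being a gradient.
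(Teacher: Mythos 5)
Your proof is correct and follows essentially the same route as the paper's: both reduce the Lagrangian condition to the closedness of the action form $\sum_i p_i\,dq_i$ restricted to the graph (your $\eta=\sum_i f_i\,dq_i$) and then invoke the Poincar\'e lemma on $\RM^n$ to produce the generating function $S$. You are merely more explicit in the pullback computation (note the coefficient of $dq_i\w dq_j$ is actually $\d_{q_j}f_i-\d_{q_i}f_j$, an immaterial sign) and in verifying the converse via equality of mixed partials, which the paper states without proof.
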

\begin{proof}
The action form
$$\sum_{i=1}^n  p_i dq_i $$
is closed on $L$, thus the Poincar\'e lemma implies that it is the differential of a function $S$:
\[ dS=\sum_{i=1}^n p_i dq_i,\]
hence $p_i=\partial_iS$.  
\end{proof}
A function $S$ as in the proposition is called a {\em generating function}\index{generating function} of $L$. It is unique up to an additive constant.
\section{The Darboux theorem}
Recall that two mappings define the same {\em germ} along a subset, if their restriction  to a common neighbourhood of the subset agree. 
We denote by
  $$f:(M,X) \to (N,Y),\ f(X)=Y $$
the germ of $f:M \to N$ along $X$.
The following fundamental result is called the {\em Darboux theorem\index{Darboux theorem}}.
  \begin{theorem} The germ of a $2n$-dimensional symplectic manifold
$(M,\omega)$ at an arbitrary point is isomorphic to the germ of $(\RM^{2n},\sum_i dq_i \w dp_i)$ at the origin.
  \end{theorem}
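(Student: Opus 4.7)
The plan is to apply Moser's homotopy trick, reducing the problem to solving a cohomological equation via Poincaré's lemma.

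First I would normalize at the point. Working in local coordinates centered at the chosen point, I can assume $M$ is a neighborhood of $0 \in \RM^{2n}$. The value $\omega(0)$ is a nondegenerate alternating bilinear form on $T_0M \cong \RM^{2n}$, so a purely linear-algebraic argument (construction of a symplectic basis) gives a linear change of coordinates after which $\omega(0) = \omega_0(0)$, where $\omega_0 := \sum_i dq_i \wedge dp_i$. Call the transformed form $\omega_1$. Both forms are closed and agree at the origin.

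Next I would set up the homotopy. Define $\omega_t := (1-t)\omega_0 + t\omega_1$ for $t \in [0,1]$. Since $\omega_t(0) = \omega_0(0)$ is nondegenerate for every $t$, and nondegeneracy is an open condition, $\omega_t$ is nondegenerate on some common neighbourhood $U$ of $0$ for all $t \in [0,1]$. The goal is to find a time-dependent vector field $X_t$ on $U$ whose flow $\p_t$ satisfies $\p_t^*\omega_t = \omega_0$; differentiating and using $\p_0 = \Id$, this is equivalent to
\[
\Lt_{X_t}\omega_t + (\omega_1-\omega_0) = 0.
\]
By Cartan's formula and $d\omega_t = 0$, this reduces to $d\, i_{X_t}\omega_t = -(\omega_1-\omega_0)$.

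Now I would solve this cohomological equation. The form $\omega_1-\omega_0$ is closed and vanishes at the origin, so by the Poincaré lemma on a star-shaped neighbourhood of $0$ I can write $\omega_1 - \omega_0 = d\a$ with $\a$ a smooth $1$-form; using the standard homotopy operator (integration along radial rays) I can moreover arrange $\a(0) = 0$. Then it suffices to solve $i_{X_t}\omega_t = -\a$, which has a unique solution $X_t$ because $\omega_t$ is nondegenerate on $U$, and $X_t(0) = 0$ because $\a(0) = 0$.

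Finally I would integrate. Since $X_t$ vanishes at the origin, standard ODE theory guarantees that the flow $\p_t$ exists for all $t \in [0,1]$ on a possibly smaller neighbourhood $V \subset U$ of $0$, fixing the origin. By construction $\p_1^*\omega_1 = \omega_0$, so $\p_1$ provides the required germ isomorphism. The main technical point to watch is the simultaneous existence of the flow up to time $1$ on a fixed neighbourhood of $0$, which is precisely what is ensured by choosing the primitive $\a$ to vanish at the origin.
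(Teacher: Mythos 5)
Your proposal is correct and follows essentially the same route as the paper: Moser's path method applied to the linear homotopy $\omega_t=(1-t)\omega_0+t\omega_1$ after a linear normalization making the two forms agree at the origin. The only (cosmetic) difference is that you take a primitive of $\dot\omega_t=\omega_1-\omega_0$ directly rather than of $\omega_t$ itself, and your explicit use of the radial homotopy operator to force $\a(0)=0$ is in fact a slightly more careful justification of the vanishing of $X_t$ at the origin than the paper's appeal to the equality of the forms there.
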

  \begin{proof}
Take a local chart
  $$\p:(M,p) \to (\RM^{2n},0)$$
at a point $p \in M$.
 We get two symplectic forms on a neighbourhood of the origin in $\RM^{2n}$:
 $$\omega_0=(\p^{-1})^* \omega,\;\;\textup{and}\;\;\; \omega_1=\sum_{i=1}^n dq_i \w dp_i.$$
The value of these forms at the origin are anti-symmetric bilinear forms which are conjugate by a linear transformation.
Thus, up to a linear change of coordinates, we may and will assume that they are equal. It follows that
 $$\omega_t =(1-t)\omega_0+t\omega_1,\;\;t \in [0,1] $$
 defines a $1$-parameter family of symplectic forms in a sufficiently small contractible neighbourhood of the origin. 
 
We now search for a $1$-parameter family $\p_t$ of symplectomorphisms such that:
 $$\p_t^* \omega_t=\omega_0 .$$
We differentiate this equation with respect to $t$ and as the right hand side is $t$-independent we obtain:
\[0=\frac{d}{dt}\p_t^* \omega_t= \p_t ^* (\Lt_{X_t} \omega_t+ \dot \omega_t), \;\;\dot \omega_t:= \frac{d}{dt}\omega_t ,\]  
where $X_t$ is the time-dependent Hamiltonian vector field associated to $v_t$.
Composing with the inverse of $v_t^*$, we get the equation:
$$\Lt_{X_t} \omega_t=-\dot \omega_t$$
Cartan's formula shows that:
$$\Lt_{X_t} \omega_t=d i_{X_t} \omega_t+i_{X_t} d \omega_t= d i_{X_t} \omega_t.$$
As the neighbourhood was assumed to be contractible, the closed form $\omega_t$ is  in fact exact by the Poincar\'e lemma, so we have 
$$\omega_t=d \a_t .$$
It is therefore sufficient to solve the equation
$$i_{X_t} \omega_t=-\dot \a_t$$
for the vector field $X_t$, which is possible, since the interior product with a symplectic form is an isomorphism. 
As the forms $\omega_1$ and $\omega_2$ are equal at the origin, the vector field $X_t$ vanishes at $0$. Thus, the time 
$1$ flow of the vector field $X_t$ exists in a small neighbourhood of the origin and yields the sought for symplectomorphism. This proves the theorem.
\end{proof}

Coordinates such as in the above theorem are called {\em Darboux coordinates\index{Darboux coordinates}}, sometimes {\em canonical coordinates}\index{canonical coordinates}. The existence of Darboux coordinates shows that there are no local symplectic invariants. 

 \begin{corollary} Let $L \subset M$ be a Lagrangian submanifold of a symplectic manifold $(M,\omega)$.
 The germ of $L$ at a point is symplectomorphic to the germ at the origin of the zero section in $T^*\RM^n$ with its canonical
 symplectic structure.
 \end{corollary}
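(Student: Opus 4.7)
The plan is to deduce this relative statement from the (absolute) Darboux theorem combined with Proposition~\ref{P::generating}, essentially using a symplectomorphism of generating-function type to straighten $L$.

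First I would reduce to the linear model by the Darboux theorem: choose a point $x\in L$, apply the theorem just proved to get a symplectomorphism
\[\psi:(M,x)\to(\RM^{2n},0),\quad \psi^*\Bigl(\sum_{i=1}^n dq_i\wedge dp_i\Bigr)=\omega.\]
Under $\psi$, the Lagrangian germ $L$ is sent to a germ of Lagrangian submanifold $L'\subset(\RM^{2n},0)$ through the origin. The tangent space $T_0L'$ is a Lagrangian subspace of $(\RM^{2n},\omega_0)$, and since the linear symplectic group acts transitively on the Lagrangian Grassmannian, a linear symplectic change of coordinates can be chosen so that $T_0L'$ equals the $q$-space $\{p_1=\cdots=p_n=0\}$. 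After this preliminary adjustment, $L'$ is transverse to the $p$-fibers at the origin, hence locally the graph of a smooth map $f:(\RM^n,0)\to(\RM^n,0)$ over the $q$-space with $f(0)=0$ and $df(0)=0$.

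Next I would invoke Proposition~\ref{P::generating}: since $L'$ is Lagrangian and presented as such a graph, there exists a generating function $S:(\RM^n,0)\to\RM$ with $f=\nabla S$. I would then introduce the fiber translation
\[\chi:\RM^{2n}\to\RM^{2n},\qquad \chi(q,p):=(q,p-\nabla S(q)).\]
A direct computation gives
\[\chi^*\Bigl(\sum_i dq_i\wedge dp_i\Bigr)=\sum_i dq_i\wedge dp_i-\sum_{i,j}\partial_i\partial_j S\,dq_i\wedge dq_j=\sum_i dq_i\wedge dp_i,\]
the double sum vanishing by symmetry of the Hessian. Thus $\chi$ is a symplectomorphism, and by construction it sends the zero section $\{p=0\}$ onto the graph $L'$. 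Composing, $\chi^{-1}\circ\psi$ is the desired symplectomorphism from the germ $(M,L)$ to the germ of the zero section in $T^*\RM^n$, whose canonical symplectic form agrees with $\sum dq_i\wedge dp_i$ up to sign under the standard identification.

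The only nontrivial point, and hence the step on which I would focus, is the reduction to the graph situation — i.e.\ showing that after a linear symplectic change of coordinates $T_0L'$ can be taken to be the $q$-plane. This rests on the transitivity of the symplectic group on Lagrangian subspaces, which is a standard linear-algebra fact. Once this is in hand, the remainder is a straightforward application of the generating-function description of Lagrangian graphs and the elementary verification that fiberwise translations by a closed $1$-form $dS$ preserve the canonical symplectic form.
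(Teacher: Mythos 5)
Your proof is correct and follows essentially the same route as the paper: reduce to $(\RM^{2n},\sum dq_i\wedge dp_i)$ by Darboux, arrange that $L$ is a graph over the $q$-space, invoke Proposition~\ref{P::generating} to get a generating function $S$, and straighten by the fiber translation $(q,p)\mapsto(q,p-\del S)$. The only (immaterial) variation is in the graph-reduction step, where you use transitivity of the linear symplectic group on Lagrangian subspaces while the paper applies $\pi/2$-rotations in selected coordinate planes; also note that your $\chi$ as written sends the graph of $\del S$ \emph{to} the zero section, so the final composition should be $\chi\circ\psi$ rather than $\chi^{-1}\circ\psi$ — a harmless bookkeeping slip.
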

 \begin{proof}
Choosing Darboux coordinates at the point considered, we can reduce to the case
$M=T^*\RM^{n}$ equipped with its canonical symplectic form. The $\pi/2$-rotations in the planes $(q_i,p_i)$ 
$$(q_i,p_i) \mapsto (-p_i,q_i) $$
are symplectomorphisms and by applying these, we may always assume that $L$ is the graph of a map. By Proposition \ref{P::generating},  this map is itself
the gradient of a function $$S:\RM^n \to \RM.$$
The map
$$ (q,p) \mapsto (q,p-\del S)$$
is a symplectomorphism which maps $L$ to the zero section.
 \end{proof}

\section{The classical Darboux-Weinstein theorem}
The theorem of Darboux implies the fundamental fact that a Lagrangian manifold has no local symplectic invariants.
In some situations there are global versions of this result.

Our proof of the Darboux theorem consisted of two parts: starting with two different symplectic
forms in a neighbourhood of the origin in $\RM^{2n}$, we first chose linear coordinates
so that both symplectic forms agree at the origin. Then knowing that the linear path between the symplectic forms remains
inside the space of symplectic forms, we applied the path homotopy method.

 \begin{theorem}
 \label{T::Darboux_Global} Let $L$ be compact submanifold of a symplectic manifold $M$ and $\omega_t, t \in [0,1]$ a one parameter family of
 symplectic forms on $M$ with $C^\infty$ dependence on $t$. Assume that the restrictions of de Rham classes $[\omega_t]$ to  $ H^2(L,\RM)$ are independent of $t$. Then there exists a neighbourhood $T \subset M $ of $L$ such that the $t$-dependent symplectic manifolds $(T,\omega_t)$ are
 all symplectomorphic.
 \end{theorem}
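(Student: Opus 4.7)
The plan is to adapt Moser's path homotopy method from the proof of the local Darboux theorem to the setting of a compact submanifold. I seek a smooth one-parameter family of diffeomorphisms $\varphi_t$, defined on a neighbourhood of $L$ in $M$, with $\varphi_0=\Id$ and $\varphi_t^*\omega_t=\omega_0$. Differentiating in $t$ and applying Cartan's formula exactly as in the proof of the Darboux theorem, this reduces to the equation
\[ i_{X_t}\omega_t=-\alpha_t, \]
where $X_t$ is the time-dependent vector field generating $\varphi_t$ and $\alpha_t$ is a $1$-form with $d\alpha_t=\dot\omega_t:=\tfrac{d}{dt}\omega_t$. Once such a primitive $\alpha_t$ is constructed with smooth dependence on $t$, the non-degeneracy of $\omega_t$ determines $X_t$ uniquely.

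The new ingredient, compared with the local Darboux theorem, is the construction of $\alpha_t$ on a full neighbourhood of $L$. I will choose a tubular neighbourhood $T_0$ of $L$ together with a smooth deformation retraction $r:T_0 \to L$. The inclusion $L \hookrightarrow T_0$ is a homotopy equivalence, so $r^{*}:H^{\bullet}(L,\RM) \to H^{\bullet}(T_0,\RM)$ is an isomorphism. By hypothesis $[\omega_t|_L]$ is independent of $t$, hence so is $[\omega_t|_{T_0}]$, and differentiation shows that $\dot\omega_t$ is exact on $T_0$. A chain-level homotopy operator associated with $r$ (integration along the retracting homotopy) is linear and therefore produces a primitive $\alpha_t$ on $T_0$ depending smoothly on $t \in [0,1]$.

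Given $\alpha_t$, it remains to integrate the vector field $X_t$ defined by $i_{X_t}\omega_t=-\alpha_t$. Here compactness of $L$ is essential: on any relatively compact open neighbourhood $U \Subset T_0$ of $L$, the forms $\alpha_t$ are uniformly bounded and $\omega_t$ remains uniformly non-degenerate for $t\in [0,1]$, so $X_t$ is uniformly bounded on $U$. Standard ODE theory then yields a neighbourhood $T \subset U$ of $L$ on which the time-$t$ flow $\varphi_t$ is defined for every $t \in [0,1]$ and remains inside $U$. By construction $\varphi_t^*\omega_t=\omega_0$, so $\varphi_t$ is a symplectomorphism from $(T,\omega_0)$ onto its image $\varphi_t(T) \subset M$ equipped with $\omega_t$, and composing with $\varphi_s^{-1}$ for different $s$ yields the pairwise symplectomorphisms between neighbourhoods of $L$ asserted by the theorem.

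The main obstacle is the smoothly $t$-dependent construction of $\alpha_t$: the cohomological hypothesis on $[\omega_t|_L]$ enters only at this step, and without it the Moser argument would be obstructed by a non-trivial class in $H^{2}(T_0,\RM) \cong H^{2}(L,\RM)$. Once $\alpha_t$ is in hand, the remainder is a formal repetition of the local Darboux computation, with compactness of $L$ being used only to extract a uniform time interval on which the Moser flow exists.
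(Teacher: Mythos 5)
Your overall strategy is the same as the paper's: Moser's path method on a tubular neighbourhood, with the cohomological hypothesis used to produce a smoothly $t$-dependent primitive $\alpha_t$ of $\dot\omega_t$, and then integration of the vector field defined by $i_{X_t}\omega_t=-\alpha_t$. That part is sound, and your use of the homotopy operator to get smooth $t$-dependence of $\alpha_t$ is more explicit than the paper. (A small technical point there: the homotopy operator $K$ attached to the retraction gives $dK\dot\omega_t=\dot\omega_t-r^*i^*\dot\omega_t$, so you still need a smoothly varying primitive of the exact form $i^*\dot\omega_t$ on $L$; this is fixable by the same device on $L$ itself.)

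The genuine gap is in the last step, the existence of the Moser flow up to time $1$. A uniform bound $\|X_t\|\le M$ on $\overline{U}$ only guarantees that the integral curve through $p$ stays in $U$ for time at least $d(p,\partial U)/M$; it does not yield a neighbourhood $T\subset U$ of $L$ on which the flow is defined for all $t\in[0,1]$. The field $X_t$ has no reason to vanish along $L$: that would force $i^*\alpha_t=0$ and hence $i^*\dot\omega_t=0$, whereas the hypothesis only makes $i^*\dot\omega_t$ exact. So points of $L$ itself may leave $U$ (indeed may leave the tubular neighbourhood on which $\alpha_t$ is defined) before time $1$, and no shrinking of $T\supset L$ can prevent this. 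This is precisely why the paper opens with the reduction ``by compactness of the interval $[0,1]$ it suffices to prove the theorem for sufficiently small values of $t$'': on a short $t$-interval the flow starting in $U_1\Subset U_0$ stays in $U_0$, and the full statement follows by composing the finitely many resulting symplectomorphisms. Inserting that reduction (or otherwise arranging $X_t$ to vanish along $L$, which requires strengthening the hypothesis) repairs your argument; as written, the time-$1$ existence claim is unjustified.
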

 \begin{proof}
 By compactness of the interval $[0,1]$ it suffices to prove the theorem for 
sufficiently small values of $t$. Choose a tubular neighbourhood 
$T$ of $L$. As $T$ retracts to $L$, the assumptions imply that the class of $\d_t \omega_t$ vanishes in $H^2(T)$. This means that $\d_t \omega_t$
is exact in $T$, that is, we find a family of $1$-forms $\a_t$ 
such that 
$$ \d_t \omega_t=d\a_t .$$ 
As before one finds a time dependent vector field $X_t$ such that
$$i_{X_t} \omega_t=-\a_t$$
By compactness of $L$, the vector field can be integrated for sufficiently small times and
its time $t$-flow sends  $\omega_0$ to $\omega_t$. This concludes the proof of the theorem. 
\end{proof}

From this theorem we will deduce the following celebrated result:
\begin{theorem}
\label{T::Weinstein}
Any compact Lagrangian submanifold  $L$ of a symplectic manifold $(M,\omega)$ admits a neighbourhood symplectomorphic to neighbourhood of $L$ in $T^*L$ with its standard symplectic structure.
\end{theorem}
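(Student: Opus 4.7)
The plan is to reduce the statement to an application of Theorem~\ref{T::Darboux_Global}. First I would construct a diffeomorphism $\Phi:U \to V$ from a neighborhood $U$ of the zero section in $T^*L$ onto a neighborhood $V$ of $L$ in $M$, with $\Phi_{|L}=\Id$, engineered so that the pulled-back form $\omega_1:=\Phi^*\omega$ coincides at every point of $L$ with the canonical symplectic form $\omega_0$ on $T^*L$. Once this is achieved, both $\omega_0$ and $\omega_1$ vanish on $L$ (since $L$ is Lagrangian for each), so the cohomological assumption of Theorem~\ref{T::Darboux_Global} holds trivially, and the convex interpolation is automatically nondegenerate on $L$.

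To construct $\Phi$, the idea is to exploit the Lagrangian condition to identify the normal bundle $\nu L:=TM_{|L}/TL$ with $T^*L$. Since $T_xL$ is its own $\omega$-orthogonal, the symplectic pairing descends to a nondegenerate bilinear map
$$T_xL \times \nu_xL \to \RM,$$
which gives a canonical bundle isomorphism $\nu L \cong T^*L$. Next I would choose an $\omega$-compatible almost complex structure $J$ on $M$; the subbundle $J(TL_{|L})$ is then a smooth Lagrangian complement to $TL$ inside $TM_{|L}$. Using the exponential map of the Riemannian metric $g(\cdot,\cdot):=\omega(\cdot,J\cdot)$ applied along $J(TL)$, I obtain the desired tubular neighborhood diffeomorphism $\Phi$. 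A direct computation in linear algebra shows that, precisely because the chosen complement is Lagrangian, the induced identification $\nu L \cong T^*L$ makes $d\Phi_{|L}$ a symplectic isomorphism, so that $\omega_{1|L}=\omega_{0|L}$.

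Set $\omega_t:=(1-t)\omega_0+t\omega_1$ on $U$. Since $\omega_{0|L}=\omega_{1|L}$ is nondegenerate, $\omega_t$ is nondegenerate on $L$, hence by openness on some smaller tubular neighborhood $T \subset U$, giving a smooth path of symplectic forms. As $L$ is Lagrangian for both endpoints, the restriction of $[\omega_t]$ to $H^2(L,\RM)$ is identically zero. Theorem~\ref{T::Darboux_Global} then provides a symplectomorphism $\psi:(T,\omega_0)\to(T,\omega_1)$, and $\Phi \circ \psi$ is the sought-for symplectomorphism between a neighborhood of the zero section in $T^*L$ and a neighborhood of $L$ in $M$.

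The main obstacle is the very first step: arranging $\Phi$ so that $\Phi^*\omega$ matches $\omega_0$ along $L$. An arbitrary tubular neighborhood diffeomorphism will not have this property, and without it the interpolation $\omega_t$ may fail to be nondegenerate already on $L$, whereupon Moser's homotopy argument breaks down. The use of a Lagrangian complement (via an $\omega$-compatible almost complex structure) is the essential ingredient that makes the reduction to Theorem~\ref{T::Darboux_Global} work.
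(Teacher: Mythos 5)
Your proof is correct, and its overall architecture (adapted almost complex structure $\to$ Lagrangian complement $J(TL)$ $\to$ exponential map $\to$ Moser's method via Theorem~\ref{T::Darboux_Global}) matches the paper's. The key step, however, is handled genuinely differently. You normalize the fibre direction of the tubular neighbourhood by the symplectic pairing itself, sending $\alpha\in T_q^*L$ to the unique $v\in JT_qL$ with $i_v\omega_{|T_qL}=\alpha$, so that $\Phi^*\omega$ and the canonical form \emph{coincide at every point of} $L$: in the linear-algebra check the cross terms reproduce the canonical pairing of $T_qL$ with $T_q^*L$, and the term $\omega(v_1,v_2)$ vanishes precisely because $JT_qL$ is Lagrangian. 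Nondegeneracy of the convex interpolation $\omega_t$ then follows from openness of nondegeneracy together with compactness of $L\times[0,1]$. The paper instead normalizes the fibre direction by the bundle map $JJ_{std}^{-1}$, which only guarantees that $f^*\omega$ is \emph{tamed} by $J_{std}$ along $L$, and it therefore needs Lemma~\ref{L::cone} (convexity of the cone of forms tamed by a fixed $J$) to see that the segment joining $\omega_{std}$ to $f^*\omega$ stays inside the symplectic forms. Your version buys exact pointwise agreement and dispenses with the cone lemma; the paper's buys a weaker requirement on the tubular neighbourhood map at the cost of one extra soft lemma. Both routes are complete modulo the same routine verifications (that $d\Phi$ is the identity on $TL$ along the zero section, and that the cohomological hypothesis of Theorem~\ref{T::Darboux_Global} holds because both endpoints restrict to zero on $L$).
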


In the proof we will make use of an important idea of Gromov, namely the
existence of an {\em almost complex structure adapted to a given symplectic form}\index{adapted almost complex structure}.
Recall that that a {\em complex structure} on a real vector space $E$ is a
linear map $J \in GL(E)$ with the property that $J^2=-\Id$. 
It is said to be {\em adapted}\index{adapted complex structure } to a 
linear symplectic form $\omega$ on $E$ if 
\[ \omega(-,J-) \]
is an Euclidean scalar product on $E$. 

\begin{lemma}
 Let $(E,\omega)$ be a symplectic vector space, $L \subset E$ a linear Lagrangian subspace.
 A complex structure $J \in GL( E) $ is adapted if and only if the following three conditions are satisfied: 
\begin{enumerate}[{\rm (i)}]
\item The quadratic form
 $L \to \RM, x \mapsto \omega(x,Jx) $
 is positive definite,
 \item $L$ and $JL$ are complementary,
 \item $\omega(x,y)=\omega(Jx,Jy)$ for any $x,y \in E$.
\end{enumerate}
\end{lemma}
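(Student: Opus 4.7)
Write $g(x,y):=\omega(x,Jy)$; the condition that $J$ is adapted means exactly that $g$ is symmetric, bilinear and positive definite. Bilinearity is automatic, so there are really two things to move between: the symmetry/positivity of $g$ on all of $E$, and the three conditions (i)--(iii) referring only to $L$, to $JL$, and to the pairing $\omega$.

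\textbf{Forward direction.} Assuming $J$ is adapted, (i) is just the restriction of positivity of $g$ to $L$. For (iii), symmetry of $g$ gives $\omega(x,Jy)=\omega(y,Jx)$; replacing $y$ by $Jy$ and using $J^{2}=-\Id$ yields $-\omega(x,y)=\omega(y,Jx)\cdot(\text{sign})$, and rearranging gives $\omega(Jx,Jy)=\omega(x,y)$. For (ii), since $L$ is Lagrangian one has $\dim L=n$ and $\dim JL=n$; hence it is enough to show $L\cap JL=0$. If $v=Jw\in L$ with $w\in L$, then $g(w,w)=\omega(w,Jw)=\omega(w,v)=0$ because $L$ is isotropic, so positivity forces $w=0$.

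\textbf{Backward direction.} Assume (i), (ii), (iii). Symmetry of $g$ follows by the same manipulation as above, run backwards: from (iii), replacing $x$ by $Jx$, we get $-\omega(x,Jy)=\omega(Jx,y)=-\omega(y,Jx)$, i.e.\ $\omega(x,Jy)=\omega(y,Jx)$. For positive definiteness, use (ii) to decompose an arbitrary $x\in E$ as $x=\ell_1+J\ell_2$ with $\ell_1,\ell_2\in L$; then expand
\[
\omega(x,Jx)=\omega(\ell_1+J\ell_2,\,J\ell_1-\ell_2).
\]
The cross terms $\omega(\ell_1,\ell_2)$ and $\omega(J\ell_2,J\ell_1)$ are to be handled using that $L$ is Lagrangian together with (iii): the first vanishes directly, and (iii) identifies the second with $\omega(\ell_2,\ell_1)=0$. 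What is left is $\omega(\ell_1,J\ell_1)+\omega(\ell_2,J\ell_2)$, which by (i) is a sum of two non-negative numbers vanishing only when $\ell_1=\ell_2=0$, i.e.\ $x=0$.

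\textbf{Main obstacle.} Everything is a short bilinear computation; the only step that requires care is the positivity part of the backward direction, where one must verify that the cross terms in the expansion of $\omega(x,Jx)$ really cancel. This is where all three hypotheses (the Lagrangian property of $L$, the decomposition (ii), and the $J$-invariance (iii) of $\omega$) are used simultaneously, and it is worth writing the expansion out carefully to be sure no sign is lost.
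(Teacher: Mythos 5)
Your proposal is correct and follows essentially the same route as the paper: restriction of positivity for (i), the $L\cap JL=0$ argument via $\omega(w,Jw)=0$ for (ii), the symmetry-plus-antisymmetry manipulation for (iii), and for the converse the decomposition $x=\ell_1+J\ell_2$ with the cross terms killed by the Lagrangian condition and (iii). If anything you are slightly more explicit than the paper in noting that the cross term $\omega(J\ell_2,J\ell_1)$ needs (iii) (not just the Lagrangian property) to vanish.
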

\begin{proof}
We first check that the conditions are necessary. The restriction of a
positive definite quadratic form to a vector sub-space remains positive 
definite, so clearly we have $(i)$. If $x \in L \cap JL$, then $Jx$ lies also 
in $L \cap JL$. As $L$ is Lagrangian, we deduce that
$$\omega(x,Jx)=0 $$
and this implies that $x=0$ since $\omega(-,J-)$ is positive definite, so
we have $(ii)$. To verify check $(iii)$, let $z=Jy$, using that $\omega(-,J-)$ and $\omega  $ are respectively symmetric and antisymmetric,
we get that:
$$\omega(x,y)=\omega(x,-Jz)=\omega(z,-Jx)=\omega(Jy,-Jx)=\omega(Jx,Jy) .$$

Let us now prove that the conditions are sufficient. By $(iii)$ the form
$\omega(-,J-)$ is symmetric and by $(ii)$  any $x \in E$ can be written as
$$x=x_1+Jx_2,\ x_1, x_2 \in L .$$
Then 
$$Jx=Jx_1- x_2, $$
and so
$$\omega(x,Jx)=\omega(x_1,Jx_1)+\omega(x_1,-x_2)+\omega(Jx_2,Jx_1)+\omega(Jx_2,-x_2).$$
The two terms in the middle are zero since $L$ is Lagrangian, whereas by $(i)$
the first and last terms are $\ge 0$ and vanish only if $x_1$ and $x_2$ are both
zero. 
\end{proof}

\begin{example}
Take $E=\RM^2$ with the standard symplectic structure
$$\omega:((x,y),(x',y')) \mapsto xy'-x'y .$$ 
A matrix defines an almost complex structure if its eigenvalues are $\pm i$. Such a matrix is of the form
$$J=\begin{pmatrix} a & b \\
 c & -a \end{pmatrix} $$
 with $a^2+bc=-1$. Therefore the set of complex structures is a two-sheeted hyperboloid. One sheet corresponds to the
 complex structure for which $\omega(-,J-)$ is positive definite and the other one for which it is negative definite.\\

 \vskip0.3cm  \begin{figure}[h!]
 \includegraphics[scale=0.6]{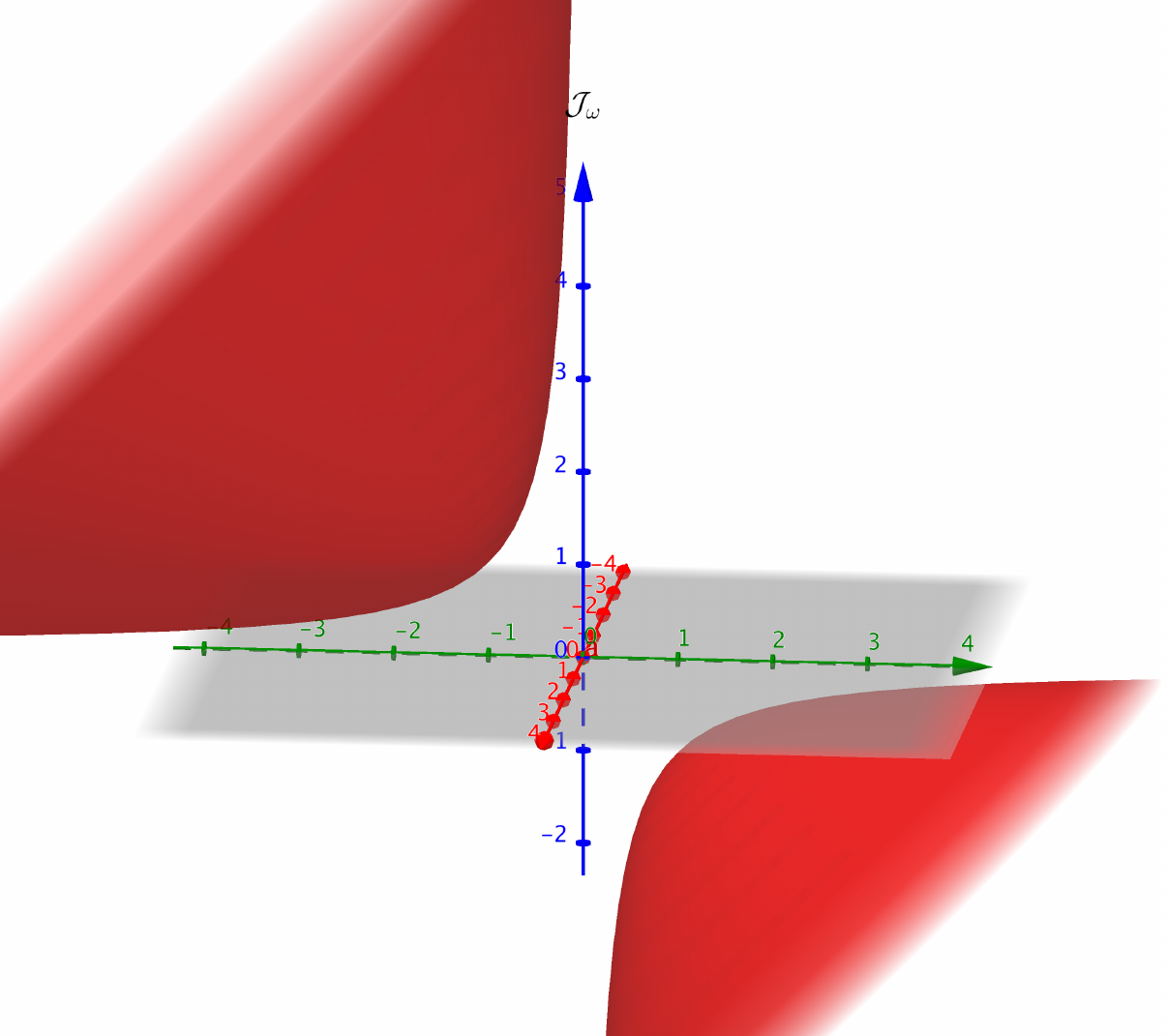}
\end{figure} \vskip0.3cm  
\end{example}

\begin{theorem} Let $(E,\omega)$ be a linear symplectic space. The subset $\Jt_\omega $ of $GL(E)$ consisting of
complex structures adapted to $\omega$ is a non-empty contractible manifold.
\end{theorem}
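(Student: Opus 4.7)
The plan is to establish the three properties in succession: non-emptiness by explicit construction, the manifold structure by exhibiting $\Jt_\omega$ as a homogeneous space, and contractibility by a polar-decomposition retraction from the convex space of inner products.

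For non-emptiness I will use a symplectic basis $(e_1,\dots,e_n,f_1,\dots,f_n)$ of $E$ (the linear version of Darboux) and set $J_0 e_i:=f_i$, $J_0 f_i:=-e_i$. Then $J_0^2=-\Id$, and in this basis $\omega(x,J_0 x)$ is nothing but the standard Euclidean quadratic form, so the three conditions of the preceding lemma are satisfied. This produces one element $J_0\in\Jt_\omega$.

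For the manifold structure I consider the smooth action of $Sp(E,\omega)\subset GL(E)$ on $\Jt_\omega$ by conjugation $J\mapsto gJg^{-1}$, which clearly preserves adaptedness. I will show it is transitive: given $J\in\Jt_\omega$, the inner product $g_J=\omega(-,J-)$ admits a $g_J$-orthonormal basis $(e_1,\dots,e_n)$ of a Lagrangian subspace $L$; by the lemma $L\oplus JL=E$ and $(e_1,\dots,e_n,Je_1,\dots,Je_n)$ is a symplectic basis in which $J$ takes the standard form, providing a symplectic transformation conjugating $J_0$ to $J$. The stabilizer of $J_0$ is $Sp(E,\omega)\cap GL(E,J_0)$, which coincides with the unitary group $U(n)$ of the Hermitian form determined by $\omega$ and $J_0$. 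Hence $\Jt_\omega\cong Sp(E,\omega)/U(n)$, a smooth homogeneous space.

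For contractibility I will exhibit $\Jt_\omega$ as a retract of the (convex, hence contractible) open cone $\Mt$ of Euclidean inner products on $E$. The inclusion is $\iota:\Jt_\omega\hookrightarrow\Mt$, $J\mapsto g_J:=\omega(-,J-)$. The retraction $r:\Mt\to\Jt_\omega$ is built by polar decomposition: given $g\in\Mt$, define $A\in GL(E)$ by $g(Ax,y)=\omega(x,y)$; antisymmetry of $\omega$ forces $A$ to be $g$-antisymmetric, so $-A^2=A^{\!*}A$ is $g$-symmetric and positive-definite. Let $P=\sqrt{-A^2}$ (the unique $g$-positive square root, which commutes with $A$), and set $r(g):=P^{-1}A$. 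A direct check shows $(P^{-1}A)^2=-\Id$, that $\omega(J\cdot,J\cdot)=\omega$, and that $\omega(x,Jx)=g(Px,x)>0$, so $r(g)\in\Jt_\omega$; one also verifies $r(\iota(J))=J$ because for $g=g_J$ the operator $A$ equals $-J$ and $P=\Id$. Thus $\Jt_\omega$ is a retract of $\Mt$, hence contractible.

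The main obstacle is the last paragraph: the linear-algebra verification that the polar construction $J=P^{-1}A$ actually satisfies $J^2=-\Id$ together with conditions (i)--(iii), and that it reproduces $J$ when applied to $g_J$. This rests on the fact that $P$ commutes with $A$ (since $P$ is a polynomial in $A^2$), which is what makes the formal manipulation $(P^{-1}A)^2=P^{-2}A^2=-\Id$ rigorous, the remaining identities then following from the $g$-antisymmetry of $A$.
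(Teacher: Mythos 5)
Your proof is correct, but it follows a genuinely different route from the one in the paper. The paper's argument is a single global parametrisation: fixing two transverse Lagrangians $L_1,L_2$, it identifies an adapted $J$ with the pair consisting of the transverse Lagrangian $L'=JL_1$ (an affine space, coordinatised by quadratic forms $S$ on $L_2$) and the positive definite form $q=\omega(-,J-)|_{L_1}$ (a convex cone); the product of these two convex sets is contractible, and the same chart settles non-emptiness and the manifold structure in one stroke, using only the preceding lemma and no square roots. You instead give the classical McDuff--Salamon argument: non-emptiness from a symplectic basis, the manifold structure from the transitive conjugation action of $Sp(E,\omega)$ with stabiliser $U(n)$, and contractibility by retracting the convex cone of inner products onto $\Jt_\omega$ via polar decomposition. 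Your route costs more linear algebra (existence, uniqueness and smooth dependence of the positive square root $P=\sqrt{-A^2}$, and the commutation of $P$ with $A$, which you correctly isolate as the crux), but it buys the identification $\Jt_\omega\cong Sp(E,\omega)/U(n)$ and exhibits $\Jt_\omega$ as a retract of the space of all metrics, both of which are useful beyond the statement itself; the paper's route is shorter and entirely elementary. One small slip in your last step: with your conventions $g_J=\omega(-,J-)$ and $g(Ax,y)=\omega(x,y)$, applying the construction to $g=g_J$ gives $A=J$ (since $\omega(Jx,Jy)=\omega(x,y)$), not $A=-J$; with $A=-J$ you would get $r(g_J)=-J$. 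The conclusion $P=\Id$ and $r(\iota(J))=J$ is unaffected once the sign is fixed.
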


\begin{proof}
We fix two transverse linear Lagrangian subspace $L_1,L_2 \subset E$. As $J$ is adapted to 
$\omega$, the quadratic form
\[ q(\eta):=\omega(\eta,J\eta),\;\;\;\eta \in L_1\]
is positive definite and $L_1$ and $JL_1$ 
provide a direct sum decomposition of $E$:
\[ E = L_1 \oplus JL_1 .\]

If conversely $L'$ is any linear Lagrangian subspace transverse to $L_1$ and 
$$q:L_1 \to \RM$$
is a positive definite quadratic form, we can recover an adapted 
complex structure $J$ on $E$ as follows: the $q$-orthogonal space to $x \in L_1$ is a hyperplane $H \subset L_1$.
Therefore there is a unique $y \in L'$ which vanishes on $H$ such that
$$ \omega(x,y)=q(x)$$
Now define a complex structure $J$ by mapping $x$ to $y$ and $y$ to $-x$. 

As the Lagrangian subspace $L_2$ is transverse to $L_1$, all other transverse Lagrangian 
subspaces $L'$ are obtained as graphs of the derivative of a quadratic function 
$$S:L_2 \to \RM.$$
Thus the space of all adapted complex structures on our symplectic vector space is in one-to-one 
correspondence  with the contractible set of pairs of quadratic forms $(S,q)$ such that $q$ is 
positive definite.
\end{proof}

Given a manifold $M$, we have a principal bundle
$$GL(TM) \to M, $$  whose fibre above $x \in M$ is the linear group $GL(T_xM)$. 
A section of this bundle
 $$J:M \to GL(TM) $$
is called an {\em almost complex structure} if for any $x \in M$, we have
 $$J(x)^2=-\Id.$$
If now $M$ is equipped with a symplectic form $\omega$, the almost complex structure $J$ is called {\em adapted} 
(or also {\em tamed}) if $\omega(-,J-)$ is a Riemannian metric. 
From the above lemma one deduces the existence of adapted almost complex structures on symplectic manifolds. 
Indeed we have a bundle
$$\Jt_\omega(M) \to M $$
whose fibre above $x \in M$ is the set of $\omega_x$-adapted structures on $T_xM$. This space is contractible and therefore the bundle admits sections.

 \begin{lemma}
 \label{L::cone}
Let $\omega_1$ and $\omega_2$ be two symplectic forms on a manifold $M$. 
Assume that there exists a complex structure $J$ adapted both to $\omega_1$ and $\omega_2$. 
Then all two-forms lying on the positive cone
 $$\a_1\omega_1+\a_2\omega_2,\ \a_i \ge 0 $$
 are symplectic.
 \end{lemma}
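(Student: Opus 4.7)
Closedness of $\a_1 \omega_1 + \a_2 \omega_2$ is immediate since both summands are closed, so the content of the lemma is pointwise non-degeneracy. My plan is therefore to fix a point $x \in M$ and reduce to linear symplectic algebra on $T_xM$, using the fact that adaptedness of $J$ means $\omega_i(-,J-)$ is a Euclidean scalar product.

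First I would introduce $g_i := \omega_i(-,J-)$, which by hypothesis is positive definite on $T_xM$ for $i=1,2$. Then
\[ g := \a_1 g_1 + \a_2 g_2 = (\a_1 \omega_1 + \a_2 \omega_2)(-,J-)\]
is a positive combination of positive definite bilinear forms; as soon as one of the $\a_i$ is strictly positive, $g$ is itself positive definite, since that summand alone already is. (The degenerate case $\a_1=\a_2=0$ must of course be excluded from the statement; otherwise one gets the zero form.) Hence $g$ is non-degenerate on $T_xM$.

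Next I would deduce non-degeneracy of $\omega := \a_1\omega_1 + \a_2\omega_2$ from that of $g$. Suppose $X \in T_xM$ satisfies $\omega(X,Y)=0$ for every $Y \in T_xM$. Since $J^2 = -\Id$, the endomorphism $J$ is invertible on $T_xM$, so every $Y$ is of the form $JZ$ for a unique $Z$; substituting yields $g(X,Z) = \omega(X,JZ) = 0$ for all $Z$, whence $X = 0$ by positive definiteness of $g$. Therefore $\omega_x$ is non-degenerate at each $x \in M$, and combined with $d\omega = 0$ this shows that $\omega$ is symplectic.

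There is no real obstacle here: the argument is a one-step linear-algebra observation that positive combinations of positive definite forms are positive definite, together with the remark that invertibility of $J$ converts this into non-degeneracy of the associated $2$-form. The only point requiring care is to exclude the trivial origin of the cone from the statement.
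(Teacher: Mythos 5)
Your proof is correct and follows essentially the same route as the paper's: assume $\omega(\xi,-)=0$, evaluate against $J\xi$ (equivalently, use invertibility of $J$), and conclude $\xi=0$ from the fact that a nonnegative combination of the positive quantities $\omega_i(\xi,J\xi)$ can only vanish if $\xi=0$. Your observation that the vertex $\a_1=\a_2=0$ of the cone must be excluded is a fair (if minor) correction to the statement, which the paper's own proof also glosses over.
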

 \begin{proof}
 Put
 $$\omega=\a_1\omega_1+\a_2\omega_2,\ \a_i \ge 0 .$$
 Assume that $\omega(\xi,-)$ vanishes then
 $$\omega(\xi,J\xi)=\a_1\omega_1(\xi,J\xi)+\a_2\omega_2(\xi,J\xi)=0$$
 But the $\omega_i(-,J-)$'s are euclidean scalar product thus 
 $$\omega_1(\xi,J\xi)\geq 0,\ \omega_2(\xi,J\xi) \geq 0$$
 therefore the sum cannot be zero unless both terms vanish. Consequently $\xi=0$. This proves the lemma.
 \end{proof}
 
We can now prove the Darboux-Weinstein theorem.
 
\begin{proof}[Proof of theorem]
The abstract normal bundle $NL$ to $L$ is the quotient of the restriction of the tangent bundle of $M$ to $L$, $TM_{|L}$, by the tangent bundle $TL$ to $L$.
By the tubular neighbourhood theorem, there is a diffeomorphism $\phi$
from a neighbourhood of $L$ in $M$ to a neighbourhood of $L$ in the normal bundle $NL$. The interior product of vectors $v$ based at a point $p$ of $L$ with the symplectic form induces an isomorphism of $NL$ with the cotangent bundle $T^*L$ of $L$:
 $$ \nu: N_pL \to T^*_pL,\;\; v \mapsto i_v \omega .$$ 
Therefore we reduced the theorem to the case $(M,\omega)=(T^*L,\omega)$ 
where $\omega$ is a priori not the standard symplectic form $\omega_{std}$ on $T^*L$.

We now choose almost complex structures $J$ and $J_{std}$ on $M=T^*L$ adapted to 
the symplectic forms $\omega$ and $\omega_{std}$. 

First observe that the tangent space $T_{q,0}M$ is a direct sum
$$T_{q,0}M=T_{q}L \oplus JT_q L .$$
The inner product with the symplectic form gives an identification of $JT_q L$ with $T_q^*L$.

Using these data, we will now define a map
$$f: M \to M $$
in the following way. The inclusion of bundles over $L$ 
$$i: T^*L \to TM_{|L} ,$$
is given pointwise by the inclusion: 
$$T^*_qL \hookrightarrow T_{q,0}M .$$
Its composition with the linear bundle map  
$$JJ_{std}^{-1}: TM_{|L} \to TM_{|L}$$ defines a map
$$ JJ_{std}^{-1} \circ i: M \to TM_{|L}. $$ 
The map  $f$ is obtained by composing this map with the exponential map
\[ exp: TM_{|L} \to M,\] defined pointwise by the exponential maps
$$exp_q:T_{(q,0)}M \to M$$ for the Riemannian metric 
$\omega(-,J_{std}-)$. So
$$f:=exp \circ JJ_{std}^{-1} \circ i:M=T^*L \stackrel{i}{\hookrightarrow} TM_{|L} \stackrel{JJ_{std}^{-1}}{\to} TM_{|L} \stackrel{exp}{\to} M. $$
Clearly $f$ restricted to $L$ is the identity.\\

\vskip0.3cm  \begin{figure}[htb!]
\includegraphics[width=12cm]{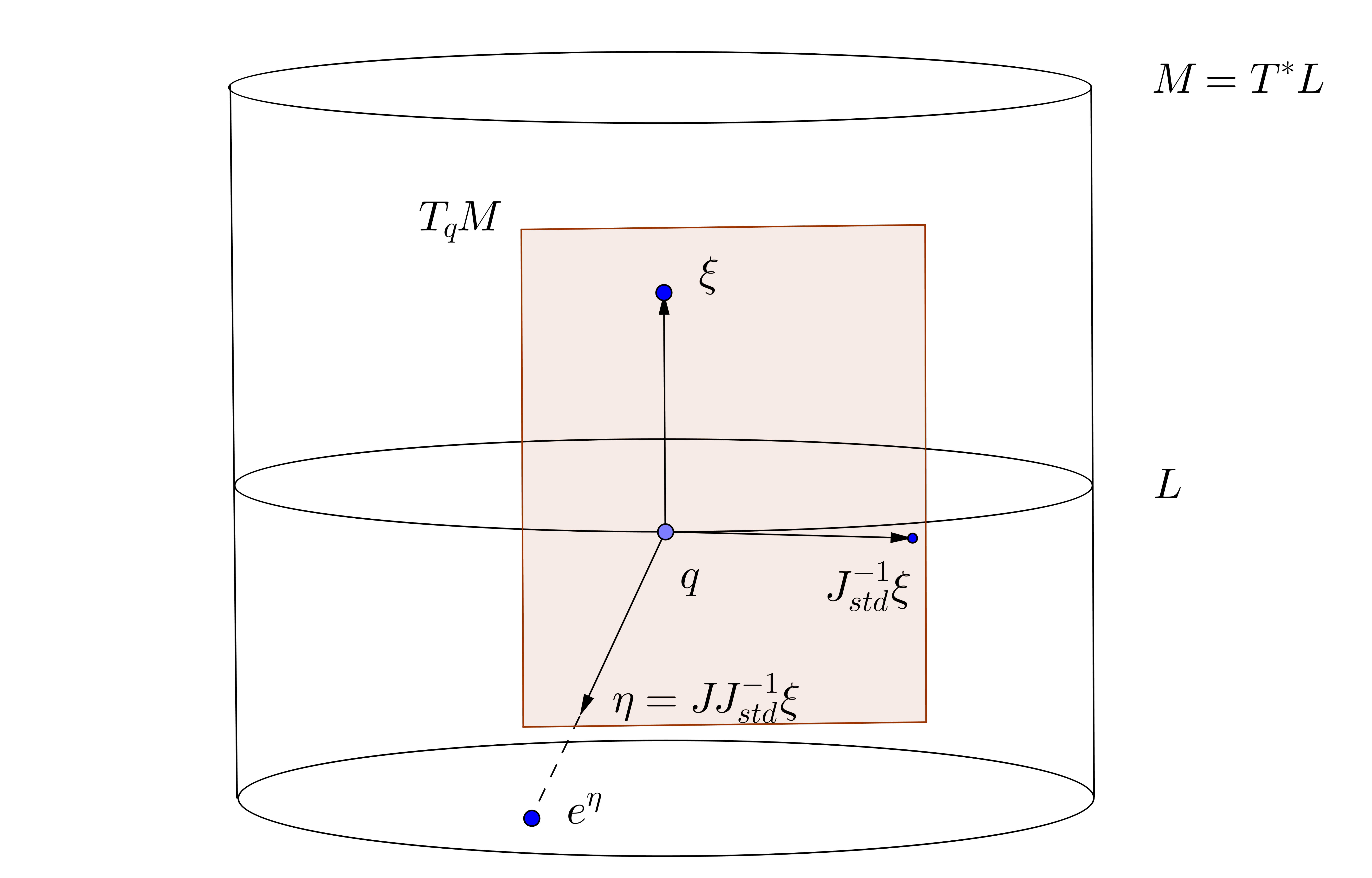}
\end{figure} \vskip0.3cm  
 
The derivative of this map is easily computed at points of $L$:
\begin{align*}
 df_{\mid TL}&=\Id\\
 df_{\mid J_{std}TL}&=JJ_{std}^{-1} 
\end{align*}
We assert that the form $f^*\omega$ is $J_{std}$-adapted in any sufficiently 
small neighbourhood of $L$. Indeed, for any point $q \in L$ and any $\xi \in  T_q L$ , we have 
 \begin{align*}
  f^*\omega_{(q,0)}(\xi,J_{std} \xi)&=\omega_{(q,0)}(df(x)\xi,df(x)J_{std}\xi) \\
  &=\omega_{(q,0)}(\xi,J J_{std}^{-1}J_{std}\xi)\\
  &=\omega_{(q,0)}(\xi, J\xi) >0 
 \end{align*}
As the condition of being adapted is open, this proves the assertion. Lemma~\ref{L::cone} shows that the path 
 $$\omega_t:=t\omega_{std}+(1-t)f^*\omega$$ remains inside the space of symplectic 
forms. Therefore, using Theorem~\ref{T::Darboux_Global} with $M=T^*L$, we get 
that $(T^*L,\omega_{std})$ and $(T^*L,f^*\omega)$ are symplectomorphic, in a sufficiently small neighbourhood of $L$.
This concludes the proof of the theorem.  \end{proof}
\section{The relative Darboux-Weinstein theorem}
As is customary in algebraic geometry, it is useful to look for a {\em relative version} of the Darboux-Weinstein
theorem, where one considers families of manifolds over a base $S$ and which specialise to
it in the case that $S$ reduces to a point. 

A fibration of $C^{\infty}$-manifolds
 $$\pi:M \to S $$
 is called {\em symplectic}, if it carries a two-form which restricts to a symplectic form on all the fibres $M_s=\pi^{-1}(s)$. In the spirit of Grothendieck, $M$ is viewed as a symplectic manifold {\em over the base $S$}. Similarly, a {\em Lagrangian manifold} $L$ over $S$ is a 
fibration $\rho: L \hookrightarrow S$, sitting in a diagram

$$ \xymatrix{L \ar@{^{(}->}[rr] \ar[rd]_{\rho} & & M \ar[ld]^{\pi} \\
   &S&
  }
$$

Thus the fibre $L_s$ at $s$ is a Lagrangian submanifold of $M_s$. Recall that for such a relative manifold $\rho: L \to S$ there is 
a natural  surjective map of vector bundles on $L$:
\[  TL \to \rho^*(TS), \;\;v \mapsto d\rho(v) . \]
The kernel of this map is $T_SL$, called the {\em relative tangent space}. It consists of those
tangent vectors of $L$ that are tangent to the fibres of $\rho$. In particular, if $\rho$ is a trivial fibration with fibre $L_0$
then $T_SL$ is simply $TL_0 \times S$.

The dual of the relative tangent bundle we
denote by $T^*_SL$: it sits in an exact sequence
\[ 0 \to \rho^*(T^*S) \to T^*L \to T^*_SL \to 0\]
of vector bundles on $L$. The restriction of $T^*_SL$ to the fibre $L_s$ is just the
cotangent bundle of $L_s$:
\[ T^*_SL_{|L_s}=T^*L_s\]

The following parametric version of the Darboux-Weinstein theorem expresses the fact 
that there are no local invariants for {\em families} of symplectic manifolds.\\

\begin{theorem} Let $L \to S $ be a proper Lagrangian submanifold of a symplectic manifold $M \to S$
over some base $S$. The germ of $M \to S$ along $L \to S$
is, locally on the base $S$, symplectomorphic to a the germ of $T_S^*L$ along the zero section.
 \end{theorem}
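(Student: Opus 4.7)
The plan is to adapt the Gromov-style proof of Theorem~\ref{T::Weinstein} to families, running every step fibrewise but with smooth dependence on $s \in S$. Since the conclusion is local on the base, I may restrict to a contractible open neighbourhood of the given point of $S$; this makes the parametric Poincar\'e lemma available. The properness of $\rho : L \to S$ guarantees that the fibres $L_s$ are compact, which is what will allow me to find neighbourhoods of $L$ in $M$ of uniform ``size'' over the chosen piece of $S$.

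A fibrewise tubular neighbourhood theorem identifies a neighbourhood of $L$ in $M$ with a neighbourhood of the zero section in the relative normal bundle $N_SL = T_SM_{|L}/T_SL$. The fibrewise symplectic form then induces, exactly as in the absolute case, a bundle isomorphism $N_SL \cong T^*_SL$ via $v \mapsto i_v\omega_{|T_SL}$, because each $L_s$ is Lagrangian in $M_s$. This reduces the problem to the case $M = T^*_SL$ equipped with two fibrewise symplectic forms that agree on $L$: the canonical form $\omega_{std}$ and the transported form $\omega$.

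Next I choose fibrewise adapted almost complex structures $J$ and $J_{std}$ tamed respectively by $\omega$ and $\omega_{std}$. These exist over our contractible piece of $S$ because the associated bundle of adapted complex structures has contractible fibres. Using them, I mimic the construction of the map $f$ in the proof of Theorem~\ref{T::Weinstein}: compose the inclusion $T^*_SL \hookrightarrow T_SM_{|L}$, the bundle endomorphism $JJ_{std}^{-1}$, and the fibrewise exponential of the Riemannian metric $\omega(-,J_{std}-)$. As in the classical case, $f$ restricts to the identity on $L$ and $f^*\omega$ is $J_{std}$-adapted in a neighbourhood of $L$, so Lemma~\ref{L::cone} applied fibrewise shows that the path
\[ \omega_t := t\,\omega_{std} + (1-t)\,f^*\omega,\quad t \in [0,1] \]
consists of fibrewise symplectic forms.

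It remains to run Moser's homotopy in the family. Since both $f^*\omega$ and $\omega_{std}$ vanish on $L$, the fibrewise de Rham classes of $\omega_t$ restricted to $L_s$ are trivial, so a parametric Poincar\'e lemma on the tubular neighbourhood (which retracts onto $L$) produces a smooth family of fibrewise primitives $\a_t$ of $\d_t\omega_t$. Solving $i_{X_t}\omega_t = -\a_t$ for a vertical (i.e.\ fibre-tangent) vector field $X_t$ and integrating from $t=0$ to $1$ yields the desired symplectomorphism over $S$. The main obstacle is ensuring the parametric smoothness of all these constructions, and in particular securing uniform control on the domain of existence of the flow of $X_t$; this is exactly where compactness of the fibres $L_s$, i.e.\ properness of $\rho$, is essential, since without it the time-$1$ flow might fail to exist on a neighbourhood of the whole of $L$.
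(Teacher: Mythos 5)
Your proposal is correct and follows essentially the same route as the paper: the paper's proof consists precisely of re-running the Gromov-style argument for Theorem~\ref{T::Weinstein} with differential forms replaced by relative (fibrewise) differential forms, using the local triviality of relative de Rham cohomology over the base to get the parametric Poincar\'e lemma and solving $i_{X_t}\omega_t=-\dot\a_t$ for a vertical vector field. The only cosmetic difference is that the paper packages your ``fibrewise forms with smooth dependence on $s$'' as the quotient complex $\Omega^\bullet_\pi=\Omega^\bullet_M/(\pi^*\Omega^1_S\wedge\Omega^{\bullet-1}_M)$, which is the same thing.
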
 
\begin{proof}
 To adapt the proof the Darboux-Weinstein theorem to this parametric case, we use the relative variant of differential forms. The {\em relative de Rham complex} is defined by
 $$\Omega^\bullet_{\pi}:=\Omega^\bullet_M/(\pi^*\Omega^1_S \w \Omega^{\bullet-1}_M), $$
where $\Omega_N^p$ denotes the space of $C^{\infty}$ $p$-forms on a manifold $N$. 
 If we take local coordinates $s_1,\dots,s_n$ which trivialise the fibration, two differential forms on $M$ are equal as elements of $\Omega^\bullet_{\pi} $ if they are equal modulo a form of the type 
 $$\sum \a_i \w ds_i,\ \a_i \in \Omega^\bullet_M. $$
The cohomology of this relative de Rham complex is, locally on the base, the cohomology of the fibre tensored with
the $C^{\infty}$-functions on the base. In particular, the cohomology class of a symplectic form vanishes in a neighbourhood of a
Lagrangian fibre. The deformation argument remains the same with the only difference that we now consider the forms $\omega_t$ entering 
the equation:
$$i_{X_t} \omega_t=-\dot \a_t$$
as relative differential forms with vanishing de Rham class.
 \end{proof}

 \section{Liouville integrability}

The classical language of {\em Poisson brackets}\index{Poisson-bracket} 
is a very useful way to express the main features of symplectic geometry. 

For two smooth functions
  $$f,g:M \to \RM, $$
the Poisson-bracket of $f$ and $g$ is defined by
  $$\{ f,g \}=\omega(X_f,X_g)=-\{g,f\}. $$
It is readily seen that
\[ \{f,g\}=\Lt_{X_f} (g),\;\;\; [X_f,X_g]=X_{\{f,g\}} .\]

The Poisson-bracket is a {\em bi-derivation}\index{bi-derivation} 
\[ \{f,gh\}=\{f,g\}h+\{f,h\}g,\;\;\{fg,h\}=f\{g,h\}+g\{f,h\}\]
which satisfies the {\em Jacobi identity}\index{Jacobi identity}:
 $$\{ f,\{g,h\} \}+\{ g,\{h,f\} \}+\{ h,\{f,g \}\}=0 $$
  
The Darboux coordinates provide a local model for the Poisson bracket:
\[ \{ f,g \}=\sum_{i=1}^n\d_{p_i} f \d_{q_i}g-\d_{q_i} f \d_{p_i}g.  \]
More generally, a {\em Poisson structure}\index{Poisson structure} on a manifold is 
such an anti-symmetric bi-derivation satisfying the Jacobi identity. 
The dynamics of a Hamiltonian $H: M \to \RM$ can be expressed concisely by the statement
\[ \dot G=\{H,G\}, \]
which indeed reduce to Hamilton's equations of motion for Darboux coordinates.  
From this we see that a quantity
$$G:M \to \RM$$
 is preserved by the flow of $H$ if and only if it {\em Poisson-commutes} with $H$:
$$\{ H,G \}=0.$$
 Such a quantity is called a {\em first integral}.  Any function $\phi$ of $H$ is a first integral: 
 $$ \Lt_{X_H}\phi(H)=\{ H,\phi(H) \}=0$$
In some cases the converse is true as well. For instance, endow $M=\RM^2$ with its canonical symplectic structure
 $\omega=dq \w dp$ and take $H=p$. Then any first integral is a function $G$ with the property that
 $$\{ H,G \}=0 \iff \d_q G=0. $$
so depends only on the variable $p$.
 A Hamiltonian $H$ is called {\em Liouville integrable} or simply {\em integrable} if there exists Poisson commuting functions $H=f_1,\dots,f_n$ with Hamiltonian vector
 fields that are linearly independent 
 $$df_1 \w df_2 \w \dots \w df_n \neq 0$$
 on an open dense subset of $M$. 
The map $f=(f_1,f_2,\dots,f_n)$ is called a {\em moment mapping}\index{moment mapping}. The smooth fibres of this map 
are automatically Lagrangian. Indeed, the Hamiltonian vector fields $X_i$ of the $f_i$'s generate the tangent bundle and
 $$\omega(X_i,X_j)=\{ f_i,f_j\}=0. $$
 
In particular if $f$ is smooth and proper, then it defines what is called a {\em Lagrangian fibration}\index{Lagrangian fibration}. The Hamiltonian flows then
induce a cocompact $\RM^n$-action, and therefore, by general properties of $\RM^n$-actions, the moment mapping is 
a fibration whose fibres are tori.
    
A moment mapping defined over some $n$-dimensional base 
$$f=(f_1,\dots,f_n):X \to S $$
gives rise to  a Lagrangian subspace in a symplectic manifold in the following way: the projection on the 
second factor gives a symplectic manifold
$$M:=X \times S \to S $$
over the base $S$. We denote the graph of $f$  by 
$$L_f \subset M=X \times S.$$ 
It is a Lagrangian submanifold of $M$ over $S$.
By the relative Darboux-Weinstein theorem, locally on $S$, a neighbourhood of $L_f$ in $M$ is 
symplectomorphic to a neighbourhood of the zero section its cotangent bundle.

 \section{Action-angle variables}
If a moment mapping $f$ is proper, then its fibres  have a co-compact $\RM^n$-action coming from 
the commuting flows of $f_1,\dots,f_n$. The Lagrangian manifold $L_f$ is therefore a fibration by tori 
over its base $S$. Action-angle variables\index{action-angle variable} provide Darboux coordinates 
adapted to this situation. The action variables $I_1,I_2,\ldots,I_n$ are functions of the $f_i$, so they are constant on the tori of the fibration and have the further property that the integral curves of the 
corresponding Hamiltonian fields $X_i$ all have a {\em fixed period}. The angle variables $\theta_1,\theta_2,\ldots,\theta_n$
are the corresponding 'time' variables, so have the property that
\[\omega=d\theta_1 \wedge d I_1+d\theta_2\wedge d I_2+\ldots+d\theta_n\wedge d I_n . \]
Before we explain this in detail, let us start with some examples.

\begin{example}
The restriction of the map
$$\RM^{2n} \to \RM^n,\ (q,p) \mapsto  (\frac{1}{2}(q_1^2+p_1^2),\dots,\frac{1}{2}(q_n^2+p_n^2))  $$
to the preimage $X$ of any open subset $S \subset \RM_{>0}^n$ produces 
a Lagrangian fibration
  \[f:X \to S . \]
The Hamiltonian vector field of $f_i=\frac{1}{2}(q_i^2+p_i^2)$ is
\[X_i=p_i\d_{q_i}-q_i\d_{p_i},\ i=1,\dots,n .\]
These vector fields $X_i$ span the tangent spaces to the fibres. 
We get action-angle coordinates by putting
$$I_i=f_i,\;\;\;\; \theta_i=\arctan \frac{p_i}{q_i} $$
or equivalently
  $$p_i:=\sqrt{I_i}\cos \theta_i,\ q_i:=\sqrt{I_i}\sin \theta_i .$$

  \vskip0.3cm  \begin{figure}[htb!]
  \includegraphics[width=12cm]{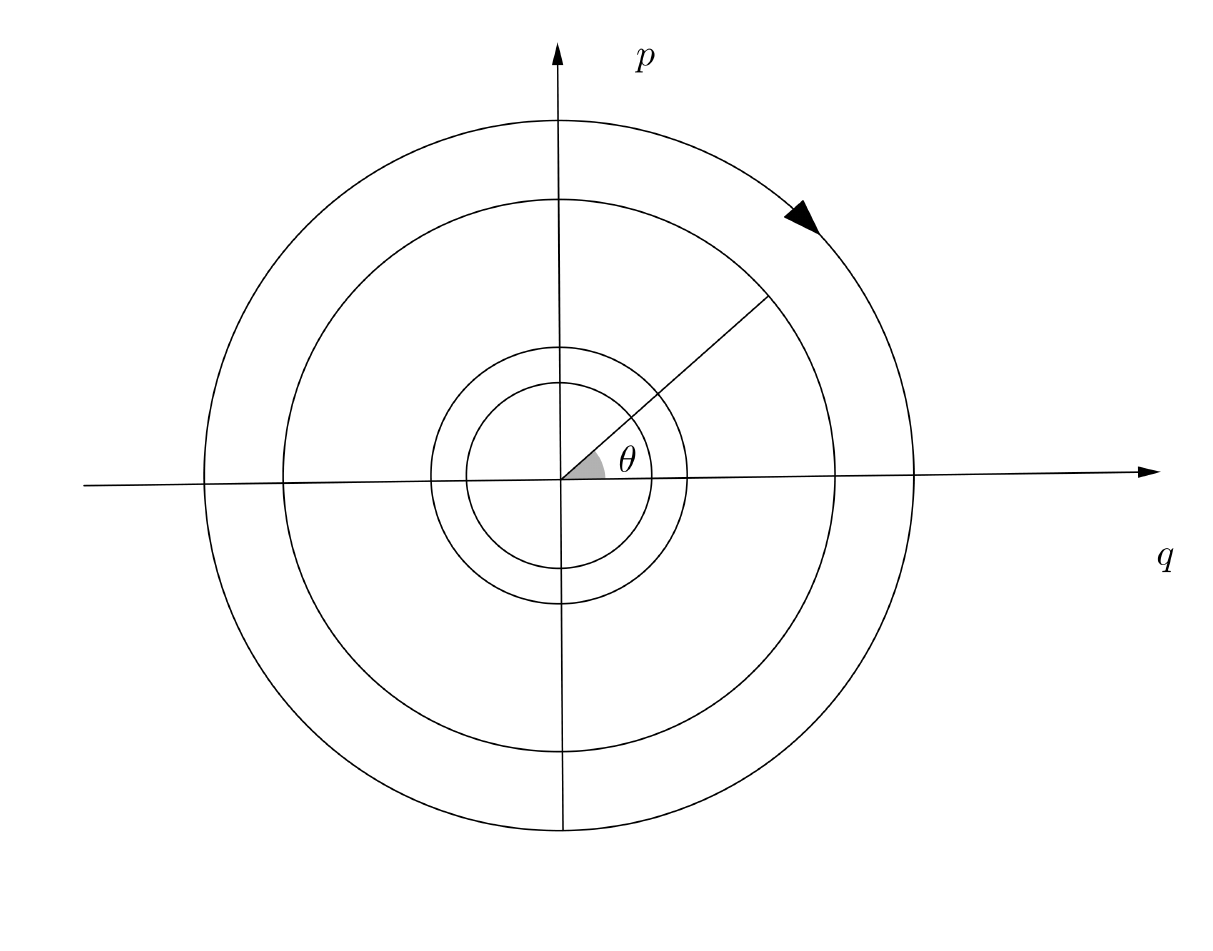}
\end{figure} \vskip0.3cm  

Note that the function $\theta_i$ is multi-valued, but its 
differential  $d\theta_i$ is equal to the closed one-form
\[ \alpha_i:= \frac{-q_i}{p_i^2+q_i^2} dp_i +\frac{p_i}{p_i^2+q_i^2} dq_i\]
These closed one-forms $\alpha_i$ form a dual basis to commuting
vector fields $X_i$:
\[ \alpha_i(X_j)= \delta_{ij}\]

\end{example}

\begin{example}
The above example is archetypical for any integrable system, but untypical
in the sense that the action variables were very simple. In general the 
determination of the action-angle variables lead to integrals 
defining transcendental functions. Consider for example the case $n=1$ and:
 $$f(q,p)=p^2+\frac{1}{2}q^2-\frac{1}{3}q^3.$$
 Let $S$ be a pointed disc  of radius $r<1/6$ and $X$ be the preimage of $S$ under the polynomial
 map $f$. This defines a locally trivial $S^1$-fibration, that we denote in the same way:
 $$f:X  \to S .$$
 
\begin{center}
  \includegraphics[width=11cm]{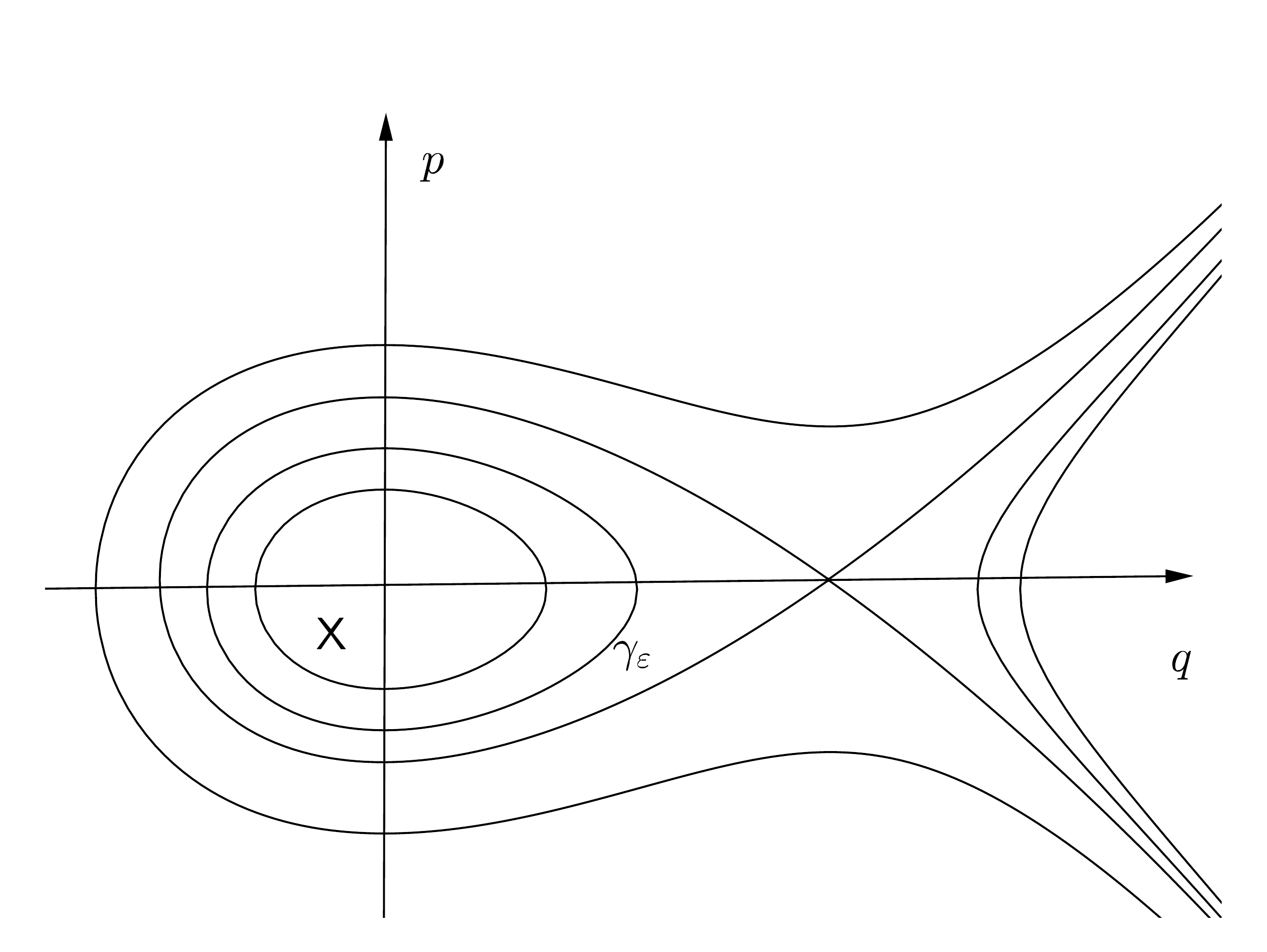}
\end{center}
In that case, the action is the function
$$I=\int_{\g_\e} pdq$$
 where $\g_\e$ is the small loop inside the level set 
 $$\{(q,p) \in X:|\;f(q,p)=\e\} \approx S^1.$$
 By Stokes' formula, this is just the area of the disc bounded by $\g_{\e}$
and is thus given as an elliptic integral of the second kind.
If we regard $I$ as a function of $q,p$, then the {\em angle $\theta$} is the time of this vector field. It is given by the indefinite integral
$$\theta=\int_{*}^{(q,p)}\frac{dp \w dq}{dI} $$
and is defined only modulo a period. This is a generic feature: computing action-angle coordinates involves abelian integrals and theta functions.
\end{example}

Let us now consider the general case. We consider a proper moment map 
$$f=(f_1,\dots,f_n) :X \to S,$$
pick a reference point $0 \in S$ and let $L_0:=f^{-1}(0)$ be the corresponding
reference torus. By shrinking $S$, we may assume that $S$ is contractible and
the fibration is trivial. The total space $X$ retracts to the torus $L_0$ and 
because the symplectic form $\omega$ vanishes on $L_0$, the form $\omega$ is
exact on $X$, so we can find an action form $\a$ for  $\omega$:
$$\omega=d\a.$$ 

For each cycle $\gamma(0) \in H_1(L_0,\ZM)$, we obtain, by parallel transport 
to neighbouring tori, a family of cycles $\gamma(s) \in H_1(L_s,\ZM)$.
This defines an {\em action integral}\index{action integral}:
\[ I_{\gamma}:S \to \R,\;\;s \mapsto I_{\gamma}(s):= \int_{\gamma(s)} \alpha .\]
If we pick a basis $\g_1(s),\g_2(s),\ldots,\g_n(s)$ for $H_1(L_s,\ZM)$, we
obtain the {\em action functions}, or {\em action variables}, $I_j$\index{action variables} 
by composing the action integrals 
$$\int_{\g_j(s)}\a. $$ with the map $f$:
$$I_j(q,p)=I_{\g_j}(f(p,q)).$$
Note that a change of the homology basis induces an $GL(n,\ZM)$-action
on the possible choices for the action variables. 

The classical Arnold-Liouville-Mineur theorem can be stated as follows:
\begin{theorem}
 Let $f:X \to S$ be a proper integrable system on a symplectic manifold $(M,\omega)$.
 There exists functions
 $$\theta_j:X \to S^1 ,$$
 defined locally near a fibre, called the angles, such that the action-angle functions induce a local symplectomorphism
 $$X \to (S^1)^n \times \RM^n ,$$
 where $(S^1)^n \times \RM^n=T^*(S^1)^n$ is equipped with the standard symplectic form.
\end{theorem}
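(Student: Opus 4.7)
The plan is to prove the theorem in three stages: (i) exploit the toric structure of the fibres, already noted in the previous section; (ii) show that the Hamiltonian vector fields $X_{I_1},\dots,X_{I_n}$ associated to the action variables have period exactly $1$ on each fibre, producing a free $\TM^n$-action near a reference torus; and (iii) define the angles $\theta_j$ as time-variables of this action along a smooth local section and verify the canonical form $\omega=\sum d\theta_j\w dI_j$.

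The commuting flows of $f_1,\dots,f_n$ give a locally free $\R^n$-action on each fibre $L_s=f^{-1}(s)$; by properness the action quotients to a torus $L_s\cong\R^n/\Lambda_s$ with smoothly varying period lattice $\Lambda_s$. After shrinking $S$ to a contractible neighbourhood of a reference point $s_0$, parallel transport produces a smooth basis $\gamma_1(s),\dots,\gamma_n(s)$ of $H_1(L_s,\ZM)$; the form $\omega$ is exact on $X$ (since $X$ retracts onto the Lagrangian torus $L_{s_0}$ on which $\omega_{|L_{s_0}}=0$), so we may pick $\a$ with $d\a=\omega$ and set $I_j(s):=\int_{\gamma_j(s)}\a$, already introduced as the action functions. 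These Poisson-commute because they depend only on $f_1,\dots,f_n$, and their Hamiltonian vector fields commute.

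The main obstacle is the period normalization. A fibred Stokes argument applied to the cylinder swept by $\gamma_j(s)$ under an infinitesimal deformation $\dot s\in T_sS$ yields
\[ dI_j(s)(\dot s)=\int_{\gamma_j(s)}\iota_{\tilde v}\omega, \]
for any lift $\tilde v$ of $\dot s$ to a vector field along $L_s$. Since $X_{I_j}$ is tangent to the fibres, it decomposes as $X_{I_j}=\sum_k(\d_{f_k}I_j)\,X_{f_k}$ on each $L_s$. Choosing lifts $\tilde v_k$ with $\tilde v_k(f_l)=\delta_{kl}$ and using $\iota_{X_{f_l}}\omega=df_l$, a short computation along $\gamma_j$ parametrised by the $\R^n$-orbit gives $\d_{f_k}I_j=\lambda_j^k(s)$, where $\lambda_j(s)\in\Lambda_s\subset\R^n$ is the lattice vector corresponding to $\gamma_j(s)$. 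Hence the time-$1$ flow of $X_{I_j}$ is translation by $\lambda_j(s)$ on $L_s$, so the orbit closes up after time $1$. In particular $dI_1\w\dots\w dI_n\neq 0$ (non-degeneracy of the period lattice), and the joint flow of the $X_{I_j}$ defines a free $\TM^n$-action on a tube around $L_{s_0}$.

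Given a smooth local section $\sigma:S\to X$ of $f$, define the angles by declaring that whenever $x=\phi^{t_1}_{X_{I_1}}\circ\dots\circ\phi^{t_n}_{X_{I_n}}(\sigma(f(x)))$, one has $\theta_j(x)\equiv t_j\pmod{1}$; this yields smooth $\theta_j:X\to S^1$ with $\d_{\theta_j}=X_{I_j}$, so $(\theta,I)$ are local coordinates. By construction $\omega(\d_{\theta_i},\d_{\theta_j})=\{I_i,I_j\}=0$ and $\omega(\d_{\theta_i},\d_{I_j})=\delta_{ij}$, while $\omega(\d_{I_i},\d_{I_j})$ vanishes after averaging against the $\TM^n$-action on each fibre. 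Alternatively, once the $\TM^n$-action is in hand one can invoke the relative Darboux--Weinstein theorem on $L_f\subset X\times S$ with $S$ reparametrised by $I_1,\dots,I_n$ to identify a neighbourhood of $L_{s_0}$ with a neighbourhood of the zero section in $T^*\TM^n$, which is exactly the asserted local symplectomorphism.
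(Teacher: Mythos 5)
Your argument is correct in its essentials but follows a genuinely different route from the one in the text. The proof given here first applies the Darboux--Weinstein theorem to place everything on $T^*(S^1)^n$ with the reference fibre as the zero section, normalises $f_i=p_i+\dots$, and then verifies by a direct Taylor-series computation that $I_j=p_j+o(\|p\|)$ and $\theta_j=q_j+o(\|q\|)$, so that $(q,p)\mapsto(\theta,I)$ is a local diffeomorphism tangent to the identity along the reference torus. You instead give the classical Arnold--Duistermaat argument: you establish the period normalisation $\d_{f_k}I_j=\lambda_j^k(s)$, deduce that the $X_{I_j}$ integrate to a free $\TM^n$-action, and read off the angles as time coordinates along a section. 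This is more conceptual and is in fact the point of view the text develops only in the two subsequent sections (the Gauss--Manin connection, the affine structure on the base, and the identification of $X$ with $T^*S/\L$); your identity $dI_j(\dot s)=\int_{\gamma_j}\iota_{\tilde v}\omega$ is precisely the lemma $\Lt_v I_{\gamma}=\int_{\gamma}\a_v$ proved there. The computational proof buys brevity once Darboux--Weinstein is in hand; yours buys an intrinsic explanation of the $1$-periodicity of the $X_{I_j}$ and of the nondegeneracy $dI_1\w\dots\w dI_n\neq 0$.

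One step needs repair. The claim that $\omega(\d_{I_i},\d_{I_j})$ \emph{vanishes after averaging} over the torus action is not right as stated: the fields $X_{I_k}=\d_{\theta_k}$ are Hamiltonian and commute with the coordinate fields $\d_{I_j}$, so $c_{ij}:=\omega(\d_{I_i},\d_{I_j})$ is already constant along each fibre; averaging returns it unchanged. It is a function of $I$ alone, but it need not be zero for an arbitrary section $\sigma$. To kill it, either take $\sigma$ Lagrangian (then $c_{ij}=0$ on the section, hence everywhere on the tube), or note that $\sum_{i<j}c_{ij}\,dI_i\w dI_j$ is a closed form pulled back from the base, hence locally $d\b$, and absorb it by the correction $\theta_j\mapsto\theta_j+b_j(I)$. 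Your fallback through the relative Darboux--Weinstein theorem also closes this gap, so the proof stands once this point is made explicit.
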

\begin{proof}
The Darboux-Weinstein theorem implies that we may assume that $M=(S^1)^n \times \RM^n=\{(q,p) \} $
equipped with its standard symplectic structure and that the Lagrangian fibre of $f$ over $s_0 \in S$ is the zero
section. Up to a linear change of coordinates we may also assume that
$$f_i(q,p)=p_i+\dots $$
where the dots stand for higher order terms in the $p_i$'s depending on the $q$-variables.

By the implicit function theorem, the fibres of $f$ are the graphs of maps
$$g_s:L \to \RM^n $$
with 
$$f(q,p)=0 \iff p=g_s(q). $$
Comparing the Taylor series of $f$ and $g_s$, we see that
$$g_s=s+o(\| s \|) .$$
We choose the cycle $\g_j(s)$ which projects to the $j$-th 
coordinate circle in the torus. Then the actions are defined by
 \begin{align*}
 I_j(q,p)&=\int_{\g_j(s)}\sum_{i=1}^n p_idq_i \\
   &=\int_{\g_j(s)}\sum_{i=1}^n (g_i)_s(q)dq_i,\\
  & =(g_j)_s(q=0)\\
  &= s_j+o(\|s \|)\\
  &=p_j+o(\|p \|)
 \end{align*}
 The associated angles $\theta_j$ are of the form
 $$\theta_j=q_j+ o(\|q \|)$$
 Thus the map
 $$(q,p) \mapsto (\theta,I) $$
 is a diffeomorphism and therefore a symplectomorphism.
\end{proof}
\section{Integrable systems and the Gauss-Manin connection}
The above proof for the existence of action-angle variables relies on blind
computations. One of the beautiful aspects of classical integrable systems is that the 
construction  of action-angle coordinates can also be explained in terms symplectic geometry 
of the  action integrals\index{action integral} $I_{\gamma}$ that we defined above.

 We consider a torus bundle
 $$f:X \to S $$
 defined by a proper smooth moment mapping over a contractible open set $S \subset \RM^n$.
We denote by $X_i$ the Hamiltonian fields of its components and choose a Lagrangian section of the torus bundle
 $$\s:S \to X,\ \s^*\omega=0. $$
 
 The Hamiltonian flows of the $X_i$'s induce an $\RM^n$-action on the fibres of $f$, which is, as any such $\RM^n$-action, 
 transitive on the fibres. Thus translating the section $\s$ by the Hamiltonian vector fields define sections starting at 
 arbitrary points on a fibre. Thus for each point $x \in X$, we get a vector space $H_x \subset T_x X$ complementary to the 
 tangent space of the fibre, that is, we get an {\em Ehresmann connection}\index{Ehresmann connection} on $X$. 
 
 In particular, each vector field $v$ on the base $S$  is lifted in a unique way to a vector field $\t v$ on $X$.
 \begin{lemma} One has
$$\omega=\sum_{i=1}^n \a_i \w df_i   $$
where $\a_i:=\omega(\tilde v_i,-)$ and 
\[ \tilde v_i := \tilde{\d}_{s_i}\]

 \end{lemma}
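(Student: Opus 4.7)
The plan is to verify the identity pointwise by decomposing $T_xX$ into two complementary Lagrangian subspaces and evaluating both sides on a convenient frame. At each $x \in X$ we have the vertical subspace $V_x = \ker(df_x)$, which is spanned by the commuting Hamiltonian fields $X_1,\dots,X_n$, and the horizontal subspace $H_x$ provided by the Ehresmann connection, which is spanned by the lifts $\tilde v_1,\dots,\tilde v_n$. The first key observation is that both distributions are Lagrangian: the vertical one because the fibres of $f$ are Lagrangian (equivalently, $\omega(X_i,X_j)=\{f_i,f_j\}=0$), and the horizontal one because on the section $\sigma$ we have $\sigma^*\omega=0$ by hypothesis, and at every other point of a fibre $H$ is the push-forward of $d\sigma(T_sS)$ by a composition of Hamiltonian flows, which are symplectomorphisms.

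Next I would collect the pairings between the two frames. From $i_{X_j}\omega = df_j$ and the defining property $\tilde v_i(f_j)=\partial_{s_i}(s_j)=\delta_{ij}$ of the horizontal lift we obtain
\[
df_j(X_i)=0,\qquad df_j(\tilde v_i)=\delta_{ij},\qquad \omega(\tilde v_i,X_j)=-df_j(\tilde v_i)=-\delta_{ij},
\]
together with $\omega(X_i,X_j)=0$ and $\omega(\tilde v_i,\tilde v_j)=0$ from the Lagrangian property established above. In terms of the one-forms $\alpha_i=\omega(\tilde v_i,-)$ this reads $\alpha_i(X_j)=-\delta_{ij}$ and $\alpha_i(\tilde v_j)=0$, so the system $(\alpha_i,df_i)$ is, up to signs, dual to the frame $(X_j,\tilde v_j)$.

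Having assembled this dictionary, the identity $\omega=\sum_i \alpha_i\wedge df_i$ is verified by testing both sides on each of the pairs $(X_a,X_b)$, $(\tilde v_a,\tilde v_b)$, and $(\tilde v_a,X_b)$: the first two vanish on both sides because of the two Lagrangian conditions, and the third reduces to a single Kronecker delta on both sides. Bilinearity and anti-symmetry then extend the equality from a basis to the whole tangent space.

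I expect the only conceptual step to be the proof that the horizontal distribution $H$ is Lagrangian away from $\sigma$; everything else is formal bookkeeping with the two dual frames. This step is handled by combining two facts stated earlier in the chapter: the Hamiltonian flows of the $f_i$ commute (so propagating $\sigma$ along them is well-defined and consistent) and each such flow is a symplectomorphism, so it carries the Lagrangian subspace $d\sigma(T_sS)$ to a Lagrangian subspace at the translated point.
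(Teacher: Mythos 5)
Your proof takes essentially the same route as the paper: evaluate both sides on the frame $(X_j,\tilde v_j)$ using $\omega(X_i,\tilde v_j)=df_i(\tilde v_j)=\delta_{ij}$, $\omega(X_i,X_j)=\{f_i,f_j\}=0$, and the vanishing of $\omega$ on the horizontal distribution. You are in fact more careful than the paper on one point: the paper deduces $\omega(\tilde v_i,\tilde v_j)=0$ directly from $\sigma^*\omega=0$, which literally only covers points on the image of $\sigma$, whereas your observation that the horizontal spaces elsewhere are obtained by pushing $d\sigma(T_sS)$ forward along the Hamiltonian flows, which are symplectomorphisms, supplies exactly the missing justification. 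One small caveat on signs: with the paper's conventions your own tables give $\omega(\tilde v_a,X_b)=-\delta_{ab}$ while $\sum_i\alpha_i\wedge df_i$ evaluates to $+\delta_{ab}$ on that pair, so the identity really reads $\omega=\sum_i df_i\wedge\alpha_i$; the paper's proof itself ends with that expression, in unacknowledged conflict with the displayed statement of the lemma, so this is a sign ambiguity inherited from the source rather than a defect of your argument, but your claim that the mixed pairing ``reduces to a single Kronecker delta on both sides'' should be checked with a fixed convention for $\wedge$.
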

 \begin{proof}
 The vector fields $\t v_1,\dots,\t v_n$ lifting $\d_{s_1},\dots,\d_{s_n}$ satisfy
 $$\omega(X_i,\t v_j)=df_i(\t v_j)=ds_i(\d_{s_j})=\dt_{ij}. $$
 Moreover as the  section $\s$ is Lagrangian, we have:
 $$\omega(\t v_i,\t v_j)=0. $$
 Since the $X_i$ pairwise commute, we also have
 $$\omega(X_i,X_j)=0. $$
 This means exactly that   the symplectic form is $\sum_{i=1}^n df_i \w \a_i$. 
 \end{proof}
 \begin{corollary} The lifted vector field $\t v$ is symplectic:
$$\Lt_{\t v}\omega=0 .$$
 \end{corollary}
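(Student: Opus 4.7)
The plan is to apply Cartan's magic formula. Since $\omega$ is closed, $\Lt_{\tilde v}\omega = d(i_{\tilde v}\omega)$, so the task reduces to showing that $i_{\tilde v}\omega$ is a closed $1$-form. For a coordinate lift $\tilde v = \tilde v_j$, the preceding lemma immediately gives
$i_{\tilde v_j}\omega = -\alpha_j$,
after using the two elementary facts $df_i(\tilde v_j) = \d_{s_j}s_i = \delta_{ij}$ and $\alpha_i(\tilde v_j) = \omega(\tilde v_i,\tilde v_j) = 0$. The vanishing of $\omega(\tilde v_i,\tilde v_j)$ is the essential geometric input: the horizontal distribution is, by construction, tangent to the Lagrangian translates of $\sigma$ under the symplectic Hamiltonian flows, so it is an isotropic subbundle.

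What remains is to prove $d\alpha_j = 0$. I would do this pointwise via the Cartan formula $d\alpha_j(A,B) = A\alpha_j(B) - B\alpha_j(A) - \alpha_j([A,B])$, evaluated on pairs drawn from the natural frame $\{X_1,\dots,X_n,\tilde v_1,\dots,\tilde v_n\}$ of $TX$. The inputs $\alpha_j(X_k) = \omega(\tilde v_j,X_k) = -\delta_{jk}$ and $\alpha_j(\tilde v_l) = 0$ are all constant, killing the derivative terms, so only the bracket terms $\alpha_j([X_k,X_l])$, $\alpha_j([X_k,\tilde v_l])$, and $\alpha_j([\tilde v_k,\tilde v_l])$ remain to be dealt with. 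The first vanishes because the $f_k$ Poisson-commute (so $[X_k,X_l]=X_{\{f_k,f_l\}}=0$), and the second because the flows of $X_l$ are exactly what is used to propagate $\sigma$ and hence preserve the horizontal distribution.

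The main obstacle is the third bracket $[\tilde v_k,\tilde v_l] = 0$, i.e.\ the integrability of the Ehresmann connection, which is not formal. I would establish it by constructing the explicit local trivialisation $\Phi \colon S \times \RM^n \to X$, $(s,t) \mapsto \phi^{X_1}_{t_1}\circ \cdots \circ \phi^{X_n}_{t_n}(\sigma(s))$, and verifying that it is a local diffeomorphism near any reference fibre (surjectivity on fibres from transitivity of the commuting $\RM^n$-action, immersivity from the symplectic pairing $\omega(X_k,\tilde v_l)=\delta_{kl}$). Each slice $\sigma_t := \Phi(\,\cdot\,,t)$ is Lagrangian because it is the image of $\sigma$ under a symplectomorphism, and these slices foliate a neighbourhood of the reference fibre; they are therefore the integral leaves of the horizontal distribution. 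In the coordinates $(s,t)$ this forces $X_k = \d_{t_k}$ and $\tilde v_l = \d_{s_l}$, and all three brackets vanish trivially. Substituting back yields $d\alpha_j = 0$, hence $\Lt_{\tilde v_j}\omega = 0$.
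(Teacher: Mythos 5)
Your proof is correct, and it is considerably more complete than the one in the paper. The paper's entire proof is the one-line reduction ``any lifted vector field is a linear combination of the $\t v_i$ with coefficients pulled back from $S$'', which silently presupposes the real content, namely that each $\t v_j$ itself is symplectic, i.e.\ that $\a_j=i_{\t v_j}\omega$ is closed. That is exactly what you prove. Your route --- evaluating $d\a_j$ on the frame $\{X_k,\t v_l\}$, observing that all the pairings $\a_j(X_k)=-\delta_{jk}$ and $\a_j(\t v_l)=0$ are constant so that only the bracket terms survive, and then killing the brackets by exhibiting the flow-box chart $\Phi(s,t)=\p^{X_1}_{t_1}\circ\cdots\circ\p^{X_n}_{t_n}(\s(s))$ in which $X_k=\d_{t_k}$ and $\t v_l=\d_{s_l}$ --- is sound, and the chart argument is the honest way to get the integrability of the Ehresmann connection, which is indeed the only non-formal point. (The sign in $i_{\t v_j}\omega=\pm\a_j$ depends on whether one uses the definition $\a_j:=\omega(\t v_j,-)$ or contracts the formula $\omega=\sum_i\a_i\w df_i$; the paper itself is inconsistent here, and it is irrelevant for closedness.) What your approach buys is a genuine proof; what the paper's phrasing buys is brevity, at the cost of leaving the key computation to the reader.

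One caveat, which applies to the paper's reduction rather than to your argument: a lift $\t v=\sum_i (g_i\circ f)\,\t v_i$ with non-constant $g_i$ satisfies $\Lt_{\t v}\omega=\sum_i d(g_i\circ f)\w\a_i$, which is in general non-zero (already for $g(p)\d_{p}$ on $T^*(S^1)$). So the corollary as literally stated holds only for the coordinate lifts $\t v_j$ and their constant-coefficient combinations --- which is all that the subsequent construction of the map $T_sS\to H^1(L_s,\RM)$ uses, and all that you (rightly) prove.
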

 \begin{proof}
 Any lifted vector field is a linear combination of the vector fields
$\t v_i$ with coefficients that are pulled back from $S$.
 \end{proof}
 By taking the interior product with the symplectic form $\omega$, a vector field $v$ is lifted to a one-form 
 $\a_v$.
 As symplectic vector fields correspond to closed one-forms, we obtain a map
 $$\Theta(S) \to \Omega^1(X)_{closed},\;\;\; v \mapsto \a_v $$
For each $s \in S$, this induces a map $T_sS \to H^1(L_s,\RM), v \mapsto [\a_v]$. 

\begin{lemma} For each $s \in S$, the map
$$T_sS \to H^1(L_s,\RM), v \mapsto [\a_v]. $$
is an isomorphism.
\end{lemma}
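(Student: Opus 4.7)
The strategy is to compare dimensions and then verify injectivity via a period computation.

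Both sides have dimension $n$: by the discussion of Liouville integrability the fibre $L_s$ is a compact $n$-torus, so $H^1(L_s,\RM)\cong \RM^n$; while $S$ is open in $\RM^n$, hence $T_sS \cong \RM^n$. It therefore suffices to establish injectivity. Note that $\a_v|_{L_s}$ is a closed $1$-form on $L_s$, since the lift $\t v$ is symplectic by the preceding corollary, so the map is well-defined.

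To show injectivity I plan to express the class $[\a_v|_{L_s}]$ through its periods. The commuting Hamiltonian flows of the components of $f$ define a transitive action of $\RM^n$ on $L_s$. Fixing a base point $x_0 \in L_s$, the orbit map $\lambda \mapsto \Phi_\lambda(x_0)$, where $\Phi_\lambda$ denotes the time-one flow of $\sum_i \lambda_i X_i$, is a covering $\RM^n \to L_s$ whose group of deck transformations is a lattice $\Lambda_s \subset \RM^n$, necessarily of rank $n$ since $L_s$ is compact of dimension $n$. This identifies $H_1(L_s,\ZM)$ with $\Lambda_s$: to each $\lambda \in \Lambda_s$ corresponds the cycle $\g_\lambda$ represented by $t \mapsto \Phi_{t\lambda}(x_0)$ for $t\in[0,1]$, whose velocity at every point is $\sum_i \lambda_i X_i$.

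Writing $v = \sum_j c_j \d_{s_j}$, so that $\t v = \sum_j c_j \t v_j$, and using the identity $\omega(X_i, \t v_j) = \dt_{ij}$ established in the previous lemma, the integrand $\omega(\t v, \sum_i \lambda_i X_i) = -\sum_i c_i \lambda_i$ is constant along $\g_\lambda$, so the period reads
\[
\int_{\g_\lambda} \a_v \;=\; \int_0^1 \omega\!\left(\t v,\,\sum_i \lambda_i X_i\right) dt \;=\; -\sum_i c_i \lambda_i.
\]
If $[\a_v|_{L_s}] = 0$ in $H^1(L_s,\RM)$, then by the de Rham theorem these integrals vanish for every $\lambda \in \Lambda_s$; since $\Lambda_s$ spans $\RM^n$, this forces $c=0$, hence $v=0$.

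The main delicate point I anticipate is the preparatory identification of $H_1(L_s,\ZM)$ with the period lattice $\Lambda_s$ and the verification that $\Lambda_s$ has full rank $n$. Both rely on the transitive and locally free character of the $\RM^n$-action on the compact fibre $L_s$, which is a standard consequence of the Liouville integrability setup recalled above (linear independence of the $df_i$ on $L_s$ implies linear independence of the $X_i$, so the action is locally free; compactness of $L_s$ then forces the stabiliser to be a full-rank lattice).
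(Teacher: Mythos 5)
Your proof is correct, and it is worth comparing with the one in the text. Both arguments rest on the same duality $\omega(X_i,\tilde v_j)=\delta_{ij}$ from the preceding lemma, and both reduce to showing injectivity after matching dimensions ($T_sS\cong\RM^n$ and $H^1(L_s,\RM)\cong\RM^n$ since $L_s$ is an $n$-torus). The difference is in how that duality is converted into a statement about cohomology classes. The text argues pointwise: the restrictions $\a_1,\dots,\a_n$ to $L_s$ are at each point the dual basis to $X_1,\dots,X_n$, hence linearly independent as forms. Strictly speaking this leaves a small step implicit, since pointwise linear independence of closed forms does not by itself give linear independence of their classes; one still needs compactness (an exact form $dh$ on compact $L_s$ must vanish at a critical point of $h$, contradicting pointwise independence) or, equivalently, a period argument. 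You supply exactly that missing step: identifying $H_1(L_s,\ZM)$ with the period lattice $\Lambda_s$ of the $\RM^n$-action and computing $\int_{\g_\lambda}\a_v=-\sum_i c_i\lambda_i$, so that vanishing of all periods forces $c=0$ because $\Lambda_s$ has full rank. Your version is therefore a globalised, and more complete, form of the paper's one-line argument; the price is the preparatory identification of $H_1(L_s,\ZM)$ with $\Lambda_s$, which the paper only sets up in the following section. Both computations check out, including the sign $\a_v(X_i)=\omega(\tilde v,X_i)=-c_i$.
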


\begin{proof}
The forms $\a_1,\dots,\a_n$ symplectically associated to the vector fields
$\tilde{v_1},\tilde{v_2},\ldots,\tilde{v_n}$ lifting $\d_{s_1},\dots,\d_{s_n}$ 
form at each point the dual basis to hamiltonian vector fields $X_1,\dots,X_n$ 
and are therefore linearly independent.
\end{proof}

\begin{lemma}
The one-form $\a_v$ lifting a vector field $v \in \Theta_S$ satisfies
$$\Lt_{\t v}\a=\a_v+d(i_{\t v}\a) .$$
\end{lemma}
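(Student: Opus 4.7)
The plan is to apply Cartan's magic formula directly. Recall that $\alpha$ is the action form on $X$, meaning $d\alpha = \omega$, and that $\alpha_v$ is defined as the contraction $i_{\tilde v}\omega$ (this is the lift by interior product described just before the lemma, consistent with the earlier formula $\alpha_i = \omega(\tilde v_i,-)$).

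First I would write out Cartan's formula for the Lie derivative of $\alpha$ along the lifted field $\tilde v$:
\[ \Lt_{\tilde v}\alpha = d(i_{\tilde v}\alpha) + i_{\tilde v}(d\alpha). \]
Since $d\alpha = \omega$, the second summand is $i_{\tilde v}\omega$. Then I would invoke the definition of $\alpha_v$ to recognise $i_{\tilde v}\omega = \alpha_v$, which yields
\[ \Lt_{\tilde v}\alpha = d(i_{\tilde v}\alpha) + \alpha_v, \]
as required.

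There is essentially no obstacle: the identity is a one-line consequence of Cartan's formula together with the definitions of $\alpha$ (an action primitive of $\omega$) and of $\alpha_v$ (the interior product of $\omega$ with the Ehresmann lift). The only thing worth stressing in the write-up is that the formula holds regardless of which specific Lagrangian section $\sigma$ was used to define the Ehresmann connection, since the identity is purely formal once $\tilde v$ has been chosen as a lift of $v$.
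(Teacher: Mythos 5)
Your proof is correct and is exactly the argument the paper gives: Cartan's formula, then $d\a=\omega$, then the definition $\a_v=i_{\t v}\omega$. Nothing to add.
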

\begin{proof}
 By Cartan's formula
 \begin{align*}
 \Lt_{\t v}\a&= i_{\t v}d\a+d(i_{\t v}\a)\\
   &=i_{\t v}\omega+d(i_{\t v}\a)\\
   &= \a_v+d(i_{\t v}\a)
 \end{align*}
\end{proof}
 The derivative of an action-integral $I_{\g}$ with respect to the
variable $s_i=f_i$ is given by
\[\Lt_v I_{\gamma(s)} =\int_{\gamma(s)} \Lt_{\t v}\a =\int_{\gamma(s)} \alpha_v\]
which means that the Ehresmann connection induces the usual {\em Gauss-Manin connection}\index{Gauss-Manin connection} on relative differential forms.

As a corollary to our discussion, we get the
\begin{theorem}
 If $\g_1(s),\g_2(s),\ldots,\g_n(s) $ form a basis for the lattice 
$H_1(L_s,\ZM)$, then the associated map
\[ \Psi: S \to T \subset \RM^n, s \mapsto (I_{\g_1}(s),I_{\g_2}(s),\ldots,I_{\g_n}(s)) \]
is a local diffeomorphism. 
\end{theorem}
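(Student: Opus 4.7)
The plan is to verify that the Jacobian of $\Psi$ is non-degenerate at every point of $S$ and then invoke the inverse function theorem. The computation of the Jacobian is already essentially done in the preceding discussion, so the proof amounts to assembling the pieces.

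First I would write the Jacobian matrix of $\Psi$ at $s \in S$ in the basis $\partial_{s_1}, \ldots, \partial_{s_n}$ of $T_sS$. Using the formula
\[ \Lt_{\partial_{s_j}} I_{\gamma_i}(s) = \int_{\gamma_i(s)} \alpha_{\partial_{s_j}} \]
established just before the statement, the $(i,j)$-entry of the Jacobian is the period $\int_{\gamma_i(s)} \alpha_{\partial_{s_j}}$.

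Next I would invoke the lemma asserting that the map $T_sS \to H^1(L_s,\RM)$, $v \mapsto [\alpha_v]$, is an isomorphism. Consequently the classes $[\alpha_{\partial_{s_1}}], \ldots, [\alpha_{\partial_{s_n}}]$ form a basis of $H^1(L_s,\RM)$. Since $\gamma_1(s), \ldots, \gamma_n(s)$ form a basis of the lattice $H_1(L_s,\ZM)$ and hence of $H_1(L_s,\RM)$, the de Rham pairing
\[ H_1(L_s,\RM) \times H^1(L_s,\RM) \to \RM, \;\; (\gamma, [\beta]) \mapsto \int_\gamma \beta \]
is non-degenerate, so the matrix $\bigl(\int_{\gamma_i(s)} \alpha_{\partial_{s_j}}\bigr)_{i,j}$ is invertible. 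Thus $d\Psi_s$ is an isomorphism for every $s \in S$, and by the inverse function theorem $\Psi$ is a local diffeomorphism.

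I do not foresee any serious obstacle: the preceding lemmas do all the geometric work, reducing the statement to a clean application of de Rham duality plus the inverse function theorem. The only point requiring a little care is making sure that the period integrals are well-defined smooth functions on $S$, which follows from the fact that the cycles $\gamma_i(s)$ are obtained by parallel transport via the Ehresmann connection and hence depend smoothly on $s$.
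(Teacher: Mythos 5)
Your proof is correct and follows the same route as the paper's (very terse) argument: the Jacobian entries are the periods $\int_{\gamma_i(s)}\alpha_{\partial_{s_j}}$, and de Rham duality together with the lemma that $v\mapsto[\alpha_v]$ is an isomorphism makes this matrix invertible. You have simply spelled out the details that the paper leaves implicit.
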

\begin{proof}
By De Rham duality, if the $\g_i(s)$'s form a basis then integration over the $\g_i(s)$'s are linearly independent
linear forms.
\end{proof}
\section{Affine structures and integrable systems}

Let us keep the notation of the previous section.

The base space of a proper smooth moment map carries an {\em affine structure}, that we shall now define.
Integration over a cycle $\gamma(s) \in H_1(L_s,\ZM)$ defines a linear function $$\int_{\gamma(s)}: H^1(L_s,\RM) \to \RM .$$ 
Identifying $H^1(L_s,\RM)$ with $T_sS$, its kernel defines a hyperplane $\g^{\perp}(s)$ in $T_sS$. 
In this way, we obtain a distribution $\gamma^{\perp}$ associated to a cycle and the level sets of $I_{\gamma}$ 
are tangent to this distribution.
 
If $\g_1(s),\g_2(s),\ldots,\g_n(s) $ form a basis for the lattice 
$H_1(L_s,\ZM)$ then the level-sets of the functions $I_{\g_i}$ are tangent to the distribution and define an affine structure
in the base of the moment map.\\
\newpage
\ \\
\vskip0.3cm 
\begin{figure}[htb!]
 \includegraphics[width=12cm]{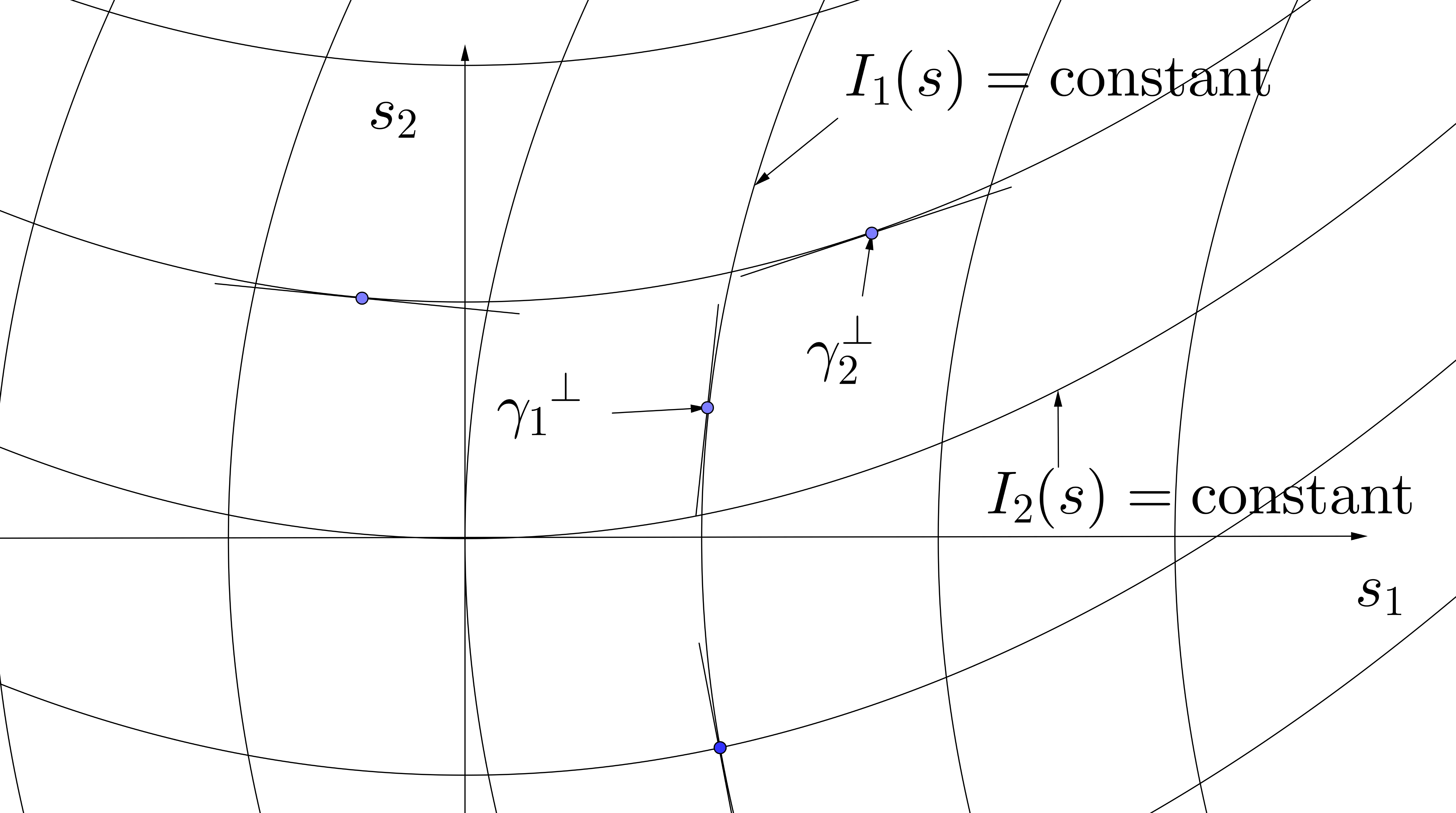}
\end{figure}
\vskip0.3cm 

If we now consider these integrals of functions depending on the variables $(q,p)$:
\[I_j(q,p)=I_{\g_j}(f(q,p)).\]
We get a new integrable system $I=(I_1,\dots,I_n)$ for which the affine structure is now linear:
$$\xymatrix{ & X \ar[ld]_-f \ar[rd]^I & \\  S \ar[rr]^-\Psi & & T}$$
meaning that {\em all action integrals $I_{\gamma}$ become linear functions} in the
standard coordinates $t_1,t_2,\ldots,t_n$ of $\RM^n \supset T$.

Via that map $\Psi$, the distribution  $\g^\perp$ becomes the parallel distribution of linear hyperplanes.\\
\vskip0.3cm 
\begin{figure}[htb!]
 \includegraphics[width=5cm]{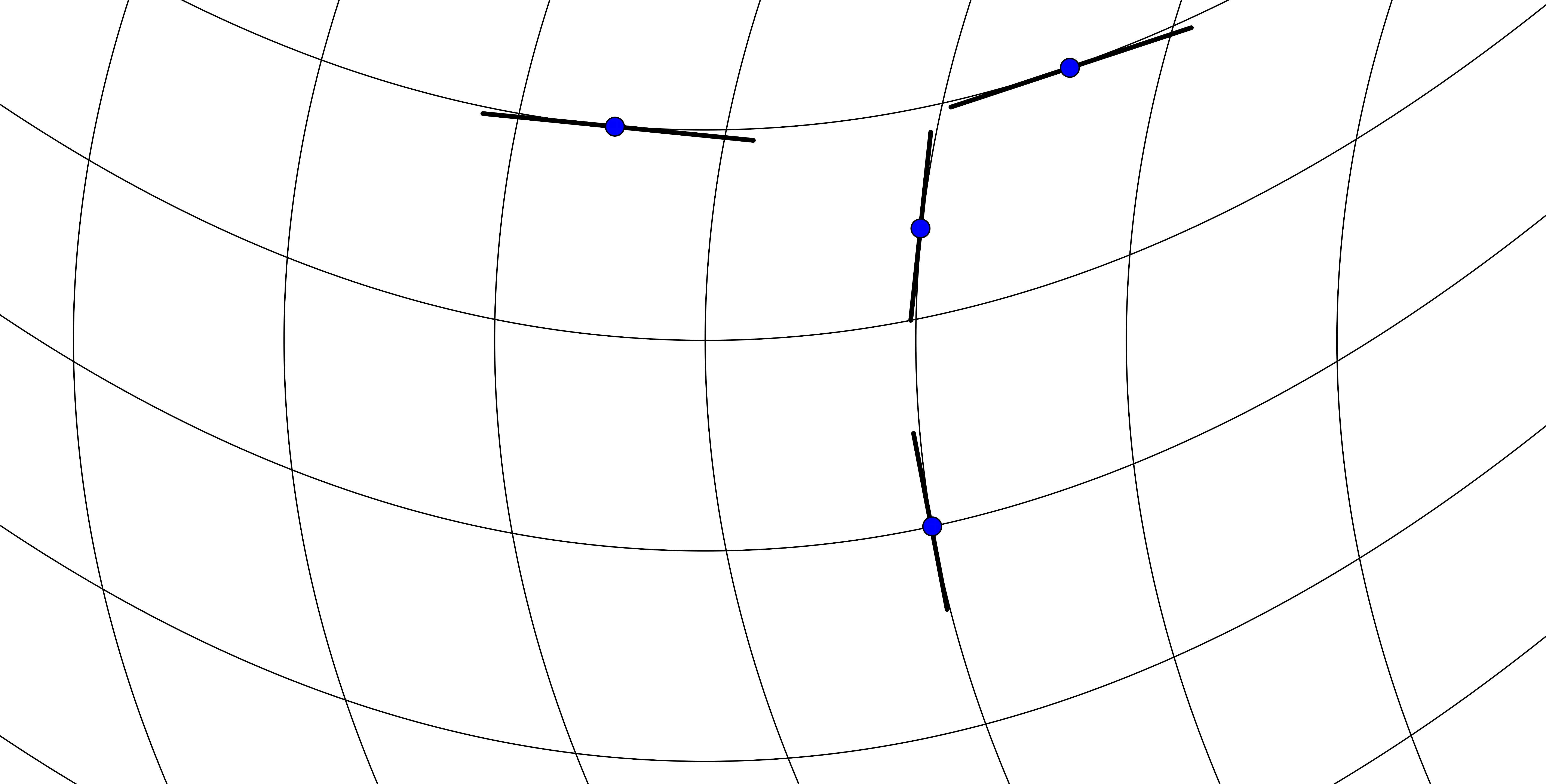}
  \includegraphics[width=2cm]{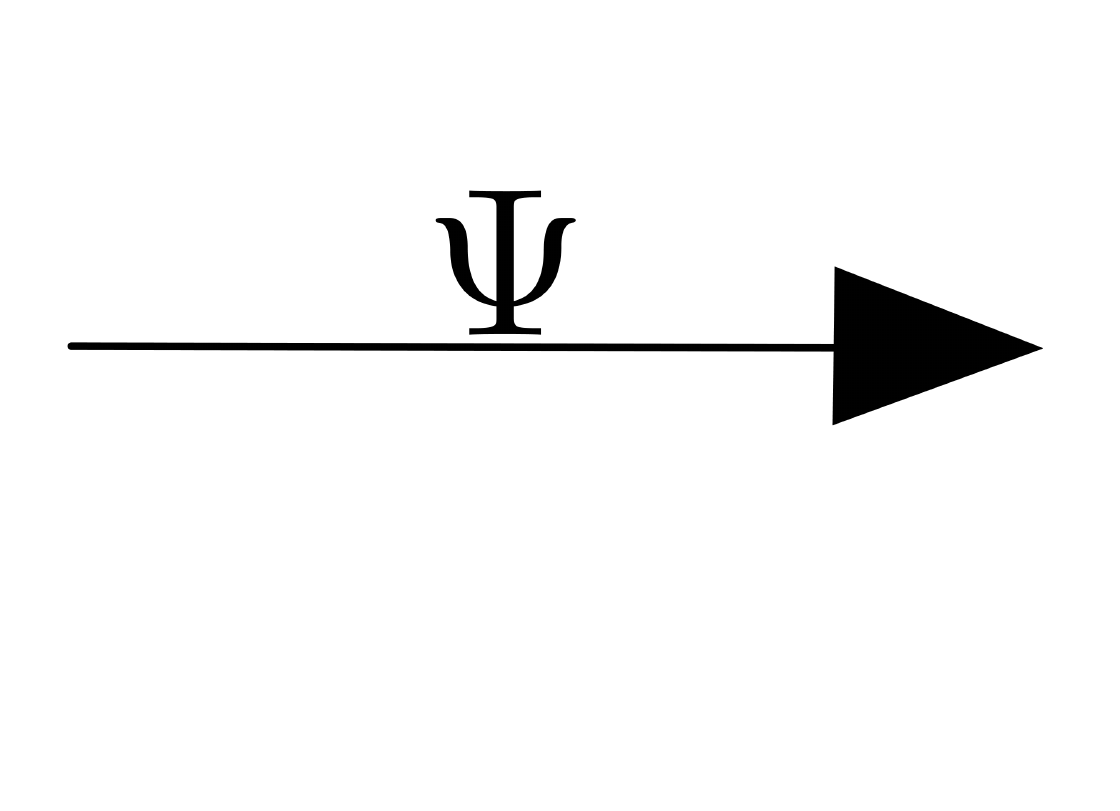}
 \includegraphics[width=5cm]{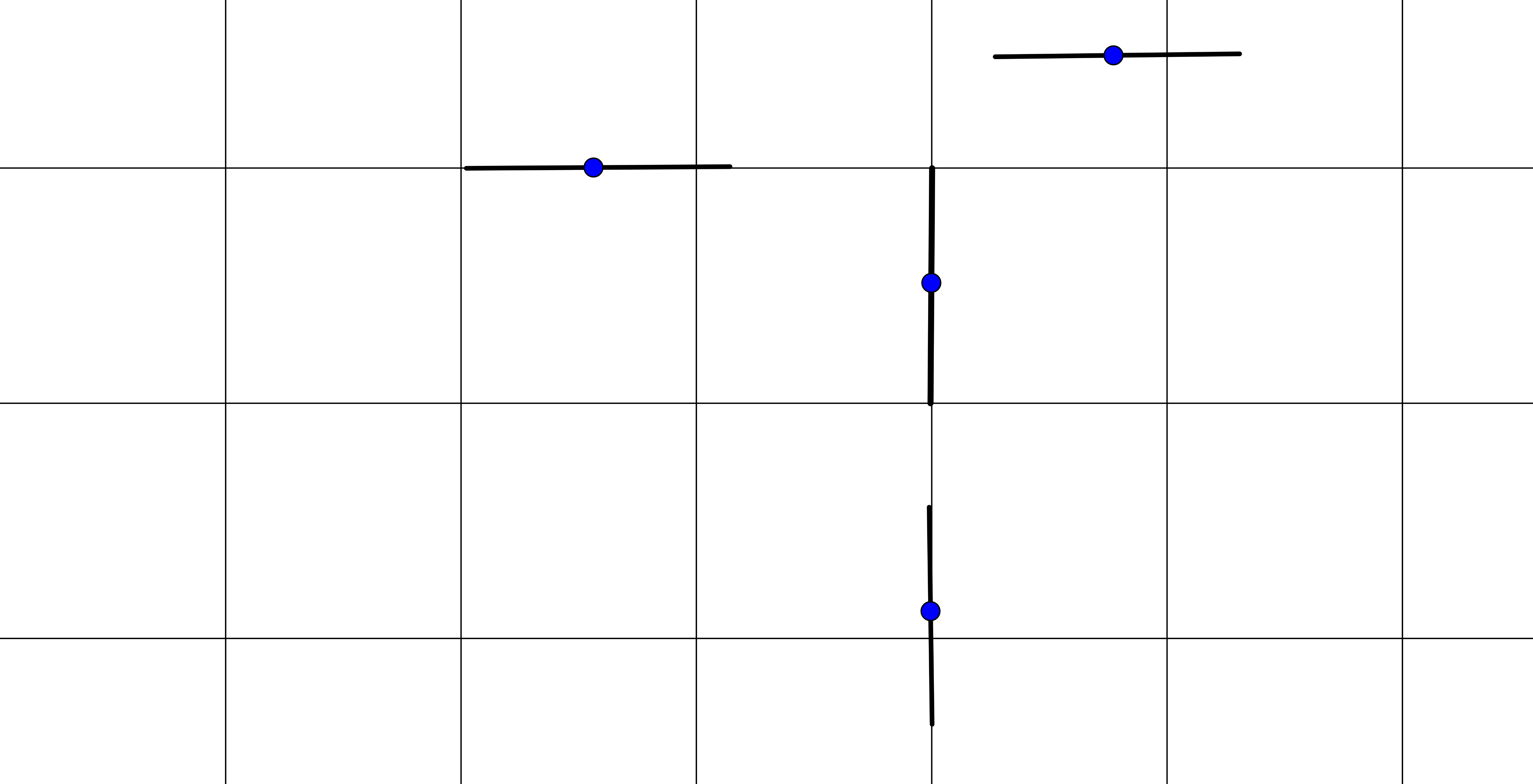}
\end{figure}
\vskip0.3cm 
Our original Lagrangian section of $X \to S$ induces a section of $X \to T$ by composition with the inverse of $\Psi$. 
In this way, we lift the vector fields $\d_{t_i}$ to closed one-forms $\b_1,\b_2,\ldots,\b_n$, using the new Ehresmann connection.

As $I_\g(t)$ is now linear in $t$, the periods of the $\b_i$'s  are all {\em constant}. 
Thus we define:
\[ \l_i:=\int_{\gamma(t)} \b_i=\d_{t_i} I_{\gamma}(t) \in \RM.\] 

The indefinite integral gives a well-defined map modulo the periods:
\[ \int_{\s(t)}^*: L_t \to \oplus_{i=1}^n \RM/\lambda_i\ZM,\ x \to (\int_{\s(t)}^x\beta_1,\int_{\s(t)}^x\beta_2,\ldots,\int_{\s(t)}^x\beta_n) \]
As the $\b_i$'s are linearly independent this map is a local diffeomorphism and by definition of the periods $\l_i$, it is
also one-to-one. Hence the induced torus fibration $X \to T$ is trivialised.

Such a trivialisation is, in general, not possible if the base $S$ of the torus fibration is not contractible.
Such examples occurs when we remove singular fibres from a larger family 
$$\overline{X} \to \overline{S}.$$
As a rule, the base $S$ of the fibration is then not contractible, and interesting global issues arise. 

The cycles $\g(s)$ may undergo monodromy\index{monodromy} under parallel transport along a loop in the base $S$, leading to
a non-trivial representation  of the fundamental group:
$$\rho: \pi_1(S,0) \to Aut(H_1(L_0,\ZM)).$$ 
The lattices $\L_s=H_1(L_s,\ZM)$ then form a non-trivial lattice bundle $\L$ over $S$, commonly referred to as
a {\em local system}\index{local system}.  Although this phenomenon obstructs the existence of action-angle variables,
the above constructions can be done  {\em locally on $S$}. This leads to a version of action angle coordinates
using a certain tautological symplectic  torus bundle $T^*S/\L$ associated to a Lagrangian section of $f:X \to S$ that we will describe now.

As the fibre $L_s$ is a torus with transitive $\RM^n$-action, the choice of a point $x \in L_s$ determines a map
$$ \RM^n \to L_s,\;\;0 \mapsto x$$
and as this map is the universal covering map, the kernel can be identified with $H_1(L,\ZM)$. 
In this way $L_s$ is, after choosing a point as origin, identified with $H_1(L_s,\RM)/H_1(L_s,\ZM)$.
 
When we dualise the isomorphism
$$T_sS \to H^1(L_s,\RM),\ v \mapsto [\a_v] $$
we get an isomorphism
\[ T_sS \to H_1(L_s,\RM) \to T^*_sS\]
Thus we get a natural lattice
$$ \L_s \subset T_s^*S $$
in the cotangent space at $s$, isomorphic to first homology group $H_1(L_s,\ZM)$ of the fibre. When we let $s$ run in $S$, we get a natural symplectic torus bundle attached to the situation:
$$T^*S/\L \to S. $$
isomorphic to
$$X \to S.$$

This dicussion can be summarised as follows
 \begin{theorem} 
 \label{T::Duistermaat}
Let $f:X \to S$ be a proper smooth integrable system with a Lagrangian section
  $$ \s:S \to X .$$
The identification of the fibre of $f$ at $s$ with $H_1(L_s,\RM)/H_1(L_s,\ZM)$ induces a {\em symplectomorphism of torus bundles over $S$}  

$$ \xymatrix{T^*S/\L \ar[rr] \ar[rd] & & X \ar[ld] \\
   &S&
  }
  $$
 where $\L_s$ is identified with $H_1(L_s,\ZM)$ via the isomorphism $T^*_s S \to H_1(L_s,\RM)$ induced by the section $\s$.
 \end{theorem}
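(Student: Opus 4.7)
The plan is to describe the announced map $\Phi : T^*S/\Lambda \to X$ explicitly via the Hamiltonian flows emanating from the Lagrangian section $\sigma$, and then verify in turn that it descends to the quotient, that it is a fibre-bundle diffeomorphism, and that it is symplectic. Concretely, given $(s,\xi)\in T^*S$, pick any smooth $h : S \to \RM$ with $(dh)_s = \xi$ and define
$$\Phi(s,\xi) := \varphi^{X_{h\circ f}}_{t=1}(\sigma(s)).$$
Since $h\circ f = h(f_1,\dots,f_n)$ Poisson-commutes with each $f_i$, the field $X_{h\circ f}$ is tangent to the fibre $L_s$, where it reads $\sum_i (\partial_i h)(s)\,X_i$; this depends only on $\xi$, so $\Phi$ is well defined. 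Writing $\xi = \sum a_i\,ds_i$ in a basis, commutativity of the flows yields $\Phi(s,\xi) = \varphi_{a_1}^{X_1}\circ\cdots\circ\varphi_{a_n}^{X_n}(\sigma(s))$.

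For fixed $s$, the map $\xi \mapsto \Phi(s,\xi)$ factors as
$$T^*_sS \xrightarrow{\sim} H_1(L_s,\RM) \xrightarrow{\exp_{\sigma(s)}} L_s,$$
the first arrow being the dual of the isomorphism $T_sS \to H^1(L_s,\RM),\,v\mapsto[\alpha_v]$ established in the previous section, and the second being the universal cover of the torus $L_s$ based at $\sigma(s)$, coming from the transitive abelian action of the Hamiltonian flows. The kernel of $\exp_{\sigma(s)}$ is $H_1(L_s,\ZM)$, whose preimage in $T^*_sS$ is exactly $\Lambda_s$ by the construction of $\Lambda$ given just above the statement. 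Hence $\Phi$ descends to a fibrewise bijection $T^*S/\Lambda \to X$; since both sides are smooth bundles of $n$-tori over $S$ and $\Phi$ covers the identity, a fibrewise diffeomorphism automatically gives a diffeomorphism of bundles.

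Finally I would verify $\Phi^*\omega = \omega_{T^*S}$, where $\omega_{T^*S}$ denotes the canonical symplectic form on $T^*S$, which descends to $T^*S/\Lambda$ because $\Lambda$ acts fibrewise by translation. I reduce this to a local computation via the Arnold--Liouville--Mineur theorem: locally on $S$ there exist action--angle coordinates $(\theta,I)$ on $X$ with $\omega = \sum d\theta_i\wedge dI_i$ and $I : S \to \RM^n$ a local diffeomorphism, and since $\sigma$ is Lagrangian, an adjustment of the angle origin by a gradient, $\theta_i \mapsto \theta_i - \partial_i\Theta(I)$, places $\sigma$ onto $\{\theta = 0\}$. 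In these coordinates $X_{I_i}=\partial_{\theta_i}$, the lattice $\Lambda_s$ equals $\ZM^n$ in the basis $dI_i$, and $\Phi$ becomes simply $(s,\sum a_i\,dI_i)\mapsto(a,I(s))$, so $\Phi^*\omega = \sum da_i\wedge dI_i = \omega_{T^*S}$ in these coordinates. The main obstacle would be to avoid this reduction: a purely intrinsic proof would have to show that $\Phi^*\alpha_i$ equals $da_i$ on $T^*S/\Lambda$, and although they have the same periods along each torus fibre, pinning down their exact difference globally, coupled to the base, is delicate. Reducing to action--angle coordinates turns $\Phi$ into the identity in suitable charts and sidesteps this difficulty entirely.
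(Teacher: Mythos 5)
Your proof is correct, and it is in fact more explicit than what the paper provides: the theorem there is stated as a summary of the preceding section, with no separate argument. The two routes differ in which direction the map is built and in how symplecticity is handled. The paper first linearises the affine structure on the base via the period map $\Psi$, lifts the coordinate fields to closed one-forms $\beta_i$ with constant periods $\lambda_i$, and trivialises each fibre by the indefinite integrals $x\mapsto\int_{\sigma(t)}^x\beta_i$ --- in effect constructing the \emph{inverse} of your $\Phi$ by integrating the forms dual to the Hamiltonian fields. You instead build $\Phi$ directly by flowing the section along the fibrewise $\RM^n$-action and then check $\Phi^*\omega=\omega_{T^*S}$ by reducing to action--angle coordinates. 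This buys an actual verification of the symplectic compatibility, which the paper leaves implicit, at the cost of re-importing the Arnold--Liouville--Mineur theorem instead of staying within the Gauss--Manin/Ehresmann formalism of the section. Two points to tighten. First, the reason $\omega_{T^*S}$ descends to $T^*S/\Lambda$ is not merely that ``$\Lambda$ acts fibrewise by translation'': translation by a one-form preserves the canonical form only when that one-form is \emph{closed}. What saves you is that $\Lambda$ is locally spanned by the differentials of the action integrals $I_{\gamma_i}$, i.e. by exact local sections --- which your own coordinate computation ($\Lambda_s=\ZM^n$ in the basis $dI_i$) exhibits, so you should say so. Second, the identification of $\Lambda_s$ with the kernel of the orbit map at $\sigma(s)$ rests on the duality $\alpha_i(X_j)=\delta_{ij}$ established in the earlier lemma; that relation is exactly what makes the paper's isomorphism $T^*_sS\to H_1(L_s,\RM)$ (dual to $v\mapsto[\alpha_v]$) coincide with the one induced by the $\RM^n$-action, and it deserves an explicit mention rather than the appeal to ``the construction of $\Lambda$ given just above the statement''.
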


\section{Bibliographical notes}
The technique used to prove the Darboux theorem is called {\em Moser's path homotopy method}. It was introduced in:\\

{\sc J. Moser}, {\em On the volume elements on a manifold}, Trans. of the Am. Math. Soc.{\bf 120}(2), 286-294, (1965).\\

The Darboux-Weinstein theorem is proved in:\\

{\sc A. Weinstein}, {\em Lagrangian submanifolds and Hamiltonian systems}, Annals of Math. {\bf 98}, 377-410, (1973).\\

Almost complex structures were introduced in symplectic geometry by Gromov:\\

{\sc Gromov, M. }{\em Pseudo holomorphic curves in symplectic manifolds,} Inventiones Mathematicae, {\bf 82}(2), 307-347, (1985).\\

The existence of action-angle coordinates for smooth proper integrable systems was first proved in:\\

{\sc H. Mineur}, {\em Sur les syst\`emes m\'ecaniques admettant n int\'egrales premieres uniformes et l'extension \`a ces syst\`emes de la m\'ethode de quantification de Sommerfeld}, C. R. Acad. Sci., Paris 200, 1571 - 1573, (1935).\\

For a long time, the work of Mineur was forgotten and the result was rediscovered by Arnold in:\\

{\sc V.I. Arnold}, {\em A theorem of Liouville concerning integrable problems of dynamics}, Sibirsk. Math . Zh. {\bf 4} , 471 - 474, (1963).\\

Theorem \ref{T::Duistermaat} is due to Duistermaat. In his paper, he also discusses monodromy obstructions 
to the global existence of action-angle variables:\\

{\sc H. Duistermaat}, {\em On global action-angle variables}, Communications on
Pure and Applies Mathematics, {\bf 33}(6), 687-706, (1980).\\

The beautiful effect of monodromy on the semi-classical spectrum for the 
spherical pendulum was described in:\\

{\sc R. Cushman, H. Duistermaat},{\em The quantum mechanical spherical pendulum}, Bull. Amer. Math. Soc. (N.S.),
Volume 19, Number 2, 475-479, (1988)\\

and for more general integrable systems in:\\

{\sc S. Vu Ngoc}, {\em Quantum monodromy in integrable systems}, Communications in mathematical physics, 203(2), 465-479, (1999).\\

The proof of the existence of action-angle variables and its generalisation using a relative Darboux-Weinstein theorem over a base was introduced to prove the absence of obstruction to quantisation, see:\\

{\sc M. Garay and D. van Straten} {\em Classical and quantum integrability}. Mosc. Math. Journal {\bf 10}, 519-545, (2010).\\
There are some misprints in the paper which we (hopefully!) corrected here.\\

Classical sources on classical mechanics in the language of global analysis are:\\

{\sc V.I. Arnold}, {\em Mathematical methods of classical mechanics}, Graduate texts in Mathematics 
(Vol. 60). Springer Verlag.\\

{\sc R. Abraham and J. Marsden}, {\em Foundations of Mechanics}, Benjamin/Cummings Publishing Company, (1978).\\

For more detail on symplectic geometry, we also refer to:\\

{\sc D. McDuff, D. Salamon} {\em Introduction to Symplectic Topology}, Oxford mathematical monographs, Clarendon Press, 1998.
\newpage
 \chapter{The KAM problem}
We now come to a central theme of classical KAM-theory, namely that of the persistence of quasi-periodic motions after
perturbation of an integrable Hamiltonian system. It is convenient to work with an algebraic model and formulate
the problem in terms of certain Poisson-algebras associated to the algebraic torus, like the algebra of analytic Fourier series. We discuss the problem first on the level of formal power series and then discuss the special features that arise at the analytic level and identify the first order analytic obstruction. 

 \section{Quasi-periodic motions\index{quasi-periodic motion}}
Let us now look at the motion  for an integrable Hamiltonian $H=f(I)$ in action angle coordinates:
  $$(\theta_1\dots,\theta_n,I_1,\dots,I_n),\ \{ I_j,\theta_k \}=\dt_{jk}.$$
Hamiltons equations of motion are
$$\left\{ \begin{matrix}
\dot I_j &=&0 \\
\dot \theta_j&=&\d_{I_j} f(I) .
\end{matrix} 
\right.
$$
These equations with initial condition 
$$I(0)=c,\;\;\;\theta(0)=\b$$
 can easily be integrated:
$$\left\{  \begin{matrix}
 I_j &=&c_j \\
 \theta_j&=&\omega_j t+\b_j
\end{matrix} \right. $$
where
\[\omega_i:=\d_{I_j} f(c)\;. \]
\newpage
\ \\
\vskip0.3cm
\begin{figure}[htb!]
  \includegraphics[width=11cm]{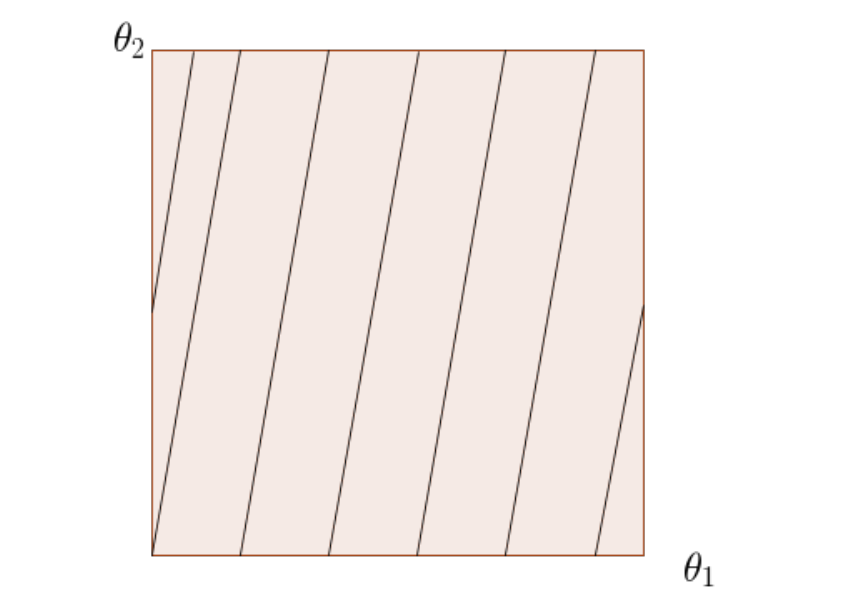}
\end{figure}
\vskip0.3cm
The trajectory lies on the torus $I=c$ and such a motion is called {\em quasi-periodic.} 
A quasi-periodic motion is dense on its torus,  if the {\em the frequency vector} of the motion 
\[\omega:=(\omega_1,\omega_2,\ldots,\omega_n)\]
has $\ZM$-independent coordinates. Of course, the vector $\omega$ will depend on $c$.

The question from which KAM-theory originates is the following:

{\em Given an integrable system in action-angle coordinates,  
do such quasi-periodic motions persist after turning on a 
perturbation of the original Hamiltonian?}  

In order to formulate this problem more precisely, it is convenient to introduce first an algebra-geometric model of the above situation. Then we will consider the formal and the analytic versions.\\

Rather then working with the angular variables $\theta_j$ it is convenient to use variables
\[ q_j := e^{i \theta_j} .\]
For real values of $\theta_j$, the variable $q_j$ runs over the unit circle in the complex plane. 
Adding an imaginary part to $\theta_j$ changes the radius of the circle traced by $q_j$. 
So we are led to consider the {\em algebraic torus}\index{algebraic torus}
$$X:=(\CM^*)^n=\{ (q_1,\dots,q_n) \in \CM^n: q_i \neq 0, i=1,2,\ldots,n\}. $$
Note that $X$ has a anti-holomorphic involution
$$ q_i \mapsto \frac{1}{\bar q_i}$$
and the usual torus $(S^1)^n$ traced by all the angular variables $\theta_j$ is the part of the algebraic torus $X$ that is fixed by this involution, so it may be considered as the 'real part' of $X$ for the real structure defined by this anti-homomorphic involution. 

Note that   
\[ \{I_j,q_k\}=\{I_j,e^{i \theta_k}\} = ie^{i\theta_k} \delta_{jk} .\]

We also will write $p_j$ for the conjugate action variables $-i I_j$, and note that then
\[
 \{p_j,q_k\}=-i \{I_j,e^{i \theta_k}\} = e^{i\theta_k} \delta_{jk} =q_k \delta_{jk} .
\]

\section{Algebraic model}
We started our discussion of symplectic geometry in the context of differential geometry, 
but now we wish to take a more algebraic viewpoint. In differential geometry, the basic 
notion is that of a manifold, but in  algebra one considers rings of functions on the manifold 
under consideration. Usually there are several natural choices for the types of functions one 
might want to consider: $C^{\infty}$, analytic, polynomial, convergent or formal power series.

The Poisson bracket of functions on a symplectic manifold gives the rings of functions the 
additional structure of a {\em Poisson algebra}\index{Poisson algebra}. 
We recall that a Poisson algebra over a (commutative) ring $R$ is an $R$-algebra with an 
additional anti-symmetric operation $\{-,-\}$ which satisfy the Jacobi identity
$$\{ \{ f,g \},h \}+ \{ \{ g,h \},f \}+\{ \{ h,f \},g \}=0$$
and which is a bi-derivation, that is
$$g \mapsto \{ f, g \} $$
is a derivation for any fixed $f$:
\[ \{f,gh\}=\{f,g\}h+g\{f,h\} .\] 
As the bracket is anti-symmetric, it is also a derivation on the first variable. For a given 
real symplectic manifold $M$, the $\RM$-algebra $C^\infty(M)$ is naturally a Poisson $\RM$-algebra.\\

The algebraic model of the family of tori is the {\em Laurent polynomial ring}\index{Laurent polynomials}
\[ A:=\CM[q,q^{-1},p]:=\CM[q_1,q_2,\ldots,q_1^{-1},q_2^{-1},\ldots,q_n^{-1},p_1,p_2,\ldots,p_n] \]     
with the Poisson-bracket  
\[  \{p_j,q_k\}=q_k \dt_{jk} .\]
The Poisson-algebra $A$ can be seen as the {\em affine coordinate ring\index{affine coordinate ring} of the 
cotangent bundle $T^*X$} of our torus $X$. 
The canonical symplectic form on that space is 
\[ \omega:=\sum_{j=1}^n \frac{dq_j}{q_j}\wedge dp_j \]
and leads precisely to the above Poisson-bracket. The fibres of the map
\[ T^*X \longrightarrow \CM^n, (q,p)\mapsto p=(p_1,p_2,\ldots,p_n) \]
are copies of $X$ and the flow of any Hamiltonian $H=f(p) \in \CM[p_1,p_2,\ldots,p_n]$ preserves these fibres.\\

We also note that in the sense of algebraic geometry, quasi-periodic motion 
can be defined ``over any field $K$'' by replacing the coefficients $\CM$ by
$K$, and then considering the Poisson-algebra
\[ A=K[q,q^{-1},p] \]
with the Poisson-bracket as defined before. 
\section{Perturbation theory}
To describe a perturbation of a Hamiltonian  $H \in A$ we ``add a parameter $t$ to the ring $A$'' and consider the ring of Laurent polynomials 
\[ B:=K[q,q^{-1},p,t] .\]
Here $t$ is a parameter, which is supposed to be a {\em central element}\index{central element}, i.e. an element that Poisson-commutes with all elements  $f \in B$: 
\[\{t,f\}=0.\]

By a {\em deformation  or perturbation of $H$}\index{perturbation}\index{deformation} we mean any element $H' \in B$ which reduces to $H$ when we put $t=0$, which means that one can write it in the form
\[ H'=H+t Q,\;\;Q \in B .\]
Note that we can consider $A$ as a {\em subring} of $B$, consisting of 
functions that do not depend on $t$, but also as {\em factor ring} of $B$: $A=B/tB$.\\

One of the main ideas of perturbation theory is that by applying systematically
appropriate automorphisms of the algebra $B$, one may try to bring the perturbed Hamiltonian $H+t Q$ 
to a simpler form or reduce it to a specific {\em normal form}\index{normal form}.

If we consider two $C^\infty$ symplectic manifolds $(M,\omega)$ and 
$(N,\omega')$, then any symplectic map $\p:M \to N$ produces a map of 
Poisson-algebras
$$\phi:=\p^*: C^\infty(N) \to C^\infty(M), f \mapsto f \circ \p, $$
which is a  {\em Poisson morphism\index{Poisson morphism}}. 
In general a Poisson morphism is defined as a map between two Poisson 
$R$-algebras $A$ and $B$ 
\[
\phi: A \to B\]
such that
\[
\begin{array}{c}
\phi(\l f)=\l \phi(f),\;\;\;\phi(f+g)=\phi(f)+\phi(g),\\
\phi(fg)=\phi(f)\phi(g),\;\;\phi(\{f,g\})=\{\phi(f),\phi(g)\}
\end{array}
\]
for all $\lambda \in R,\; f,g \in A$.

A {\em Poisson automorphism}\index{Poisson automorphism} is an invertible Poisson morphism 
$$\phi: A \to A,$$ and can be seen as a generalisation of the notion of 
symplectomorphism. 

A {\em Casimir element} \index{Casimir element} is an element $c$ that Poisson-commutes with all elements of $A$: 
$$\{c,a\}=0\;\;\; \textup{for all}\;\;\; a \in A .$$
We call a Poisson automorphism $\phi$ {\em central}\index{central Poisson automorphism}, if $\phi(c)=c$
for all Casimir elements.\\

The rings $A$ and $B$ introduced above however is too small for many of the constructions we wish to perform. We will encounter
several Poisson algebras similar to $A$ and $B$, that consist of certain power series rather than polynomials. 
These power series can be formal or convergent in some of the variables. We will introduce them in the sequel, but will usually call them $A$ and $B$. 

\begin{definition}
If $A$ is a Poisson algebra\index{Poisson algebra} over a field of characteristic $0$ and $S \in B:=A[[t]]$, 
then the series 
$$\varphi:=e^{t\{-,S\}}=\Id+t\{-,S\}+\frac{t^2}{2!}\{\{-,S\},S\}+\dots $$
is called  the {\em formal flow}\index{formal flow} of $S$.
\end{definition}

\begin{proposition}
The formal flow 
\[\varphi: B \to B\] 
is a central Poisson-automorphism.
\end{proposition}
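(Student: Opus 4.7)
The plan is to reduce everything to the fact that $D := \{-,S\}$ is a derivation of $B$ for both the associative product and the Poisson bracket, and then exponentiate. First I would extend the Poisson bracket from $A$ to $B=A[[t]]$ by $K[[t]]$-bilinearity (so that $t$ is manifestly central), and verify by the bi-derivation axiom that $D(fg)=D(f)g+fD(g)$, and by the Jacobi identity (rewritten as $\{\{f,g\},S\}=\{\{f,S\},g\}+\{f,\{g,S\}\}$) that $D(\{f,g\})=\{D(f),g\}+\{f,D(g)\}$. Next, I would note that $\varphi=\sum_{n\geq 0}\tfrac{t^n}{n!}D^n$ is well defined on $B$: on any element $f\in B$ only the $t$-adic truncations matter, and since every term carries an explicit factor $t^n$, the series converges in the $t$-adic topology of $B$.

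The multiplicativity and Poisson-compatibility then follow from the standard Leibniz formulas
\begin{equation*}
D^n(fg)=\sum_{k=0}^n \binom{n}{k}D^k(f)\,D^{n-k}(g),\qquad D^n(\{f,g\})=\sum_{k=0}^n \binom{n}{k}\{D^k(f),D^{n-k}(g)\},
\end{equation*}
proved by induction on $n$ from the two derivation properties above. Dividing by $n!$, re-indexing, and using the Cauchy product formula (which is valid for $t$-adically convergent series in $A[[t]]$), one obtains $\varphi(fg)=\varphi(f)\varphi(g)$ and $\varphi(\{f,g\})=\{\varphi(f),\varphi(g)\}$. $K$-linearity is immediate from the definition. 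This is the longest computation but it is completely mechanical once one has the two derivation properties.

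For invertibility, I would introduce $\psi:=e^{-t\{-,S\}}$, which is defined by the same $t$-adic argument, and verify $\varphi\circ\psi=\psi\circ\varphi=\mathrm{Id}$ by the usual formal identity for the exponential of a single linear operator: the coefficient of $t^n$ in $\varphi\circ\psi$ collects as $\sum_{k+\ell=n}\tfrac{(-1)^\ell}{k!\,\ell!}D^n=\tfrac{(1-1)^n}{n!}D^n$, which vanishes for $n\geq 1$. Finally, for centrality, if $c\in B$ is a Casimir then $\{c,f\}=0$ for every $f\in B$; in particular $D(c)=\{c,S\}=0$, so $\varphi(c)=c$.

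The only real subtlety I foresee is bookkeeping for $t$-adic convergence: one must check that all re-arrangements in the two Cauchy products (for proving multiplicativity and for proving $\varphi\circ\psi=\mathrm{Id}$) are legitimate because every $D^n(f)$ lies in $B$ but is multiplied by $t^n$, so in any fixed truncation modulo $t^N$ only finitely many terms contribute. Once this is clear, the algebraic identities above carry through verbatim, and no analytic estimates are needed.
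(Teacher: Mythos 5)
Your proposal is correct and follows essentially the same route as the paper: exponentiate the operator $D=\{-,S\}$, use the bi-derivation property to get multiplicativity and the Jacobi identity to get Poisson-compatibility, and observe that Casimirs are killed by $D$, hence fixed by $\varphi$. You are in fact more complete than the printed proof, which only displays the identities to first order in $t$ and silently omits both the $t$-adic convergence bookkeeping and the verification of invertibility via $e^{-t\{-,S\}}$, all of which you supply correctly.
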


\begin{proof} As the terms in the series of $\varphi$ consist of Poisson-brackets, it is clear that
$\varphi$ is central. 
One has 
\[
\begin{array}{rcl}
\varphi(f)\cdot \varphi(g)&=&(f+t\{f,S\}+\ldots)(g+t\{g,S\}+\ldots)\\
&=&f\cdot g+t(\{\{f,S\}g\}+f\{g,S\})+\ldots\\
&=&\{f,g\}+t\{f g,S\}+\ldots\\
&=&\varphi(f\cdot g),\\
\end{array}
\]
where the derivation property of the Poisson-bracket is used. Furthermore
\[
\begin{array}{rcl}
\{\varphi(f),\varphi(g)\}&=&\{f+t\{f,S\}+\ldots, g+t\{g,S\}+\ldots\}\\
&=&\{f,g\}+t(\{\{f,S\},g\}+\{f,\{g,S\}\})+\ldots\\
&=&\{f,g\}+t\{\{f,g\},S\}+\ldots\\
&=&\varphi(\{f,g\}),\\
\end{array}
\]
where one uses the Jacobi-identity.


\end{proof}


\section{The formal model}

In this section and in the next one, we denote by $K$ any field of characteristic zero and consider the rings
\[A:=K[q,q^{-1}][[p]], \;\;\;B:=A[[t]]=K[q,q^{-1}][[p,t]] \] 
of formal power series in $p=(p_1,\dots,p_n)$ and a central element $t$ whose coefficients are Laurent polynomials in $q=(q_1,\dots,q_n)$ with the Poisson bracket
\[\{ p_j,q_k \}= q_k \delta_{jk} .\]
Note that $K[[p]] \subset A$ and thus we can and will consider $A$ as a $K[[p]]$-algebra. 
There is also a natural {\em averaging map}\index{averaging} 
\[ av: A \to K[[p]]\]
of  'taking the average over the torus'. It is the $K[[p]]$-linear map that
maps each monomial $q^I$, $I \neq 0$ to $0$, whereas $av$ is the identity on 
$K[[p]] \subset A$.
  
The powers of the maximal ideal of $K[[t]]$ filter the ring $B$. We write 
$$f=g+(t)^n $$
if $f-g$ is of order $n$, i.e., it is a power series with monomials of degree not smaller than $n$ in the $t$ variable.
More generally, given an ideal we write $f=g+I $ if $f-g$ belongs to some ideal $I$. We also write $(p)$ for the ideal $(p_1,p_2,\ldots,p_n)$
generated by the $p_i$'s.

We will consider a special class of Hamiltonians. We have seen that a Hamiltonian written in action-angle variables $(q,p)$ depends only on the action variables $p$. Therefore we start with a Hamiltonian $H  \in K[[p]]$ without constant term:
\[H=\sum_{I \in \NM^n } \omega_I p^I= \sum_{i=1 }^n \omega_i p_i+(p)^2.\]
We will call such a Hamiltonian {\em integrable}\index{integrable Hamiltonian}, as clearly 
\[\{H,p_i\}=0, \;\;i=1,2,\ldots,n .\]
The vector  $\omega:=(\omega_1,\omega_2,\ldots,\omega_n)$ is called the
{\em frequency vector\index{Frequency vector of a Hamiltonian}} of $H$ (at $0$):
\[ \omega=(\frac{\partial H}{\partial p_1}(0), \frac{\partial H}{\partial p_2}(0), \ldots, \frac{\partial H}{\partial p_n}(0)) .\]

If $I \in \ZM^n$ is an integral vector, we write
\[ (\omega,I) :=\sum_{i=1}^n \omega_i I_i\]
for the euclidean scalar product of $\omega$ and $I$.

The Hamiltonian $H$ is called {\em non-resonant\index{resonant and non resonant Hamiltonians}}, if the $\omega_1,\omega_2,\ldots,\omega_n$ are $\ZM$-independent: 
$$  (\omega,I) \neq 0\;\;\;\textup{for all}\;\;\;I \in \ZM^n \setminus \{ 0 \} $$
Otherwise it is called {\em resonant} and in that case a non-zero vector $I \in \ZM^n$ such that
$$ (\omega,I) = 0$$
is called a {\em resonance} of $H$.
 
For instance,
$$H=p_1+a p_2 $$
is resonant if and only if $a \in \QM$.

\begin{proposition}\label{P::formal_PBH}
For a non-resonant $H \in K[[p]]$ the map
\[ \{H,-\}:A \to A\]
has $K[[p]]$ as kernel and the $K[[p]]$-module generated by the monomials $q^I$, $I \neq 0$ 
as image.
\end{proposition}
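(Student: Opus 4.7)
The plan is to diagonalise the operator $\{H,-\}$ in the Fourier basis $\{q^I\}_{I \in \ZM^n}$ of $A$ and then invert the resulting ``small divisors'' using the non-resonance hypothesis. The computation starts at the monomial level. Since $H \in K[[p]]$ we have $\{H,p_k\}=0$, and the derivation property combined with $\{p_j,q_k\}=q_k\delta_{jk}$ yields $\{H,q_k\}=(\partial_{p_k}H)\,q_k$. Applying the Leibniz rule one more time gives the key formula
\[
 \{H,\,p^J q^I\} \;=\; (I,\nabla H)\,p^J q^I, \qquad \nabla H := (\partial_{p_1}H,\ldots,\partial_{p_n}H) \in K[[p]]^n,
\]
for every $J \in \NM^n$, $I \in \ZM^n$. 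Writing a general $f \in A$ as a Fourier sum $f = \sum_{I \in \ZM^n} f_I(p)\,q^I$ with $f_I \in K[[p]]$ (only finitely many $I$ contributing in each $p$-degree), this produces the diagonal expression
\[
 \{H,f\} \;=\; \sum_{I \in \ZM^n} (I,\nabla H)\,f_I(p)\,q^I .
\]

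The non-resonance hypothesis enters through the observation that the constant term in $p$ of $(I,\nabla H)$ equals $(I,\omega)$: by hypothesis this is nonzero for every $I \neq 0$, so $(I,\nabla H)$ is a \emph{unit} in $K[[p]]$ for each such $I$, while $(0,\nabla H)=0$. The kernel assertion follows at once: if $\{H,f\}=0$, comparing Fourier components gives $(I,\nabla H)\,f_I=0$ for all $I$, and invertibility forces $f_I=0$ whenever $I\neq 0$, so $f \in K[[p]]$; conversely every $f \in K[[p]]$ is annihilated. For the image, the inclusion into the $K[[p]]$-module $M$ generated by $\{q^I : I\neq 0\}$ is immediate from the diagonal formula, whose $I=0$ component is always zero. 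For the reverse inclusion, given $g = \sum_{I\neq 0} g_I(p)\,q^I \in M$ I set $f_I := g_I/(I,\nabla H) \in K[[p]]$ and $f := \sum_{I\neq 0} f_I(p)\,q^I$; by construction $\{H,f\}=g$ formally.

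The main---indeed the only---obstacle is to verify that the formally constructed $f$ actually lies in $A$, i.e.\ that for each $L \in \NM^n$ the coefficient $[p^L]f$ is a \emph{Laurent polynomial} in $q$ rather than an arbitrary formal Fourier sum. This is where the triangular structure comes in. Writing $(I,\nabla H) = (I,\omega) + R_I(p)$ with $R_I \in (p)K[[p]]$, the defining relation $(I,\nabla H)f_I = g_I$ becomes the recursion
\[
 (I,\omega)\,[p^L]f_I \;=\; [p^L]g_I \;-\; \sum_{\substack{K+M=L\\ K \neq 0}} [p^K]R_I \cdot [p^M]f_I,
\]
from which a straightforward induction on $|L|$---using that $g \in A$ so that only finitely many $I$ contribute to $[p^{L'}]g$ for each $|L'|\leq|L|$---shows that the same finiteness property propagates to $f$. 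Everything else is bookkeeping.
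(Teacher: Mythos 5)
Your proof is correct and follows essentially the same route as the paper: diagonalise $\{H,-\}$ in the monomial basis over $K[[p]]$ and observe that the eigenvalue $(I,\nabla H)=(\omega,I)+(p)$ is a unit of the local ring $K[[p]]$ for $I\neq 0$. Your recursion on the $p$-degree, checking that the preimage really lies in $K[q,q^{-1}][[p]]$ (only finitely many Fourier modes per $p$-degree), is a welcome extra precision; the paper only asserts that the image contains the monomials $q^I$, $I\neq 0$, and leaves this completion step implicit.
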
																										
\begin{proof}
We can consider $A=K[q,q^{-1}][[p]]$ as a $K[[p]]$-algebra with the monomials $q^I$, $I \in \ZM^n$ 
as $K$-basis.
As $H \in K[[p]]$, the map $\{H  ,-\}:A \to A$ is $K[[p]]$-linear. Furthermore, due to
the Poisson-commutation rule
\[ \{p_k,q_j\}= q_j \delta_{kj},\]
the map $\{H  ,-\}$ is diagonal in the monomial basis and we can write
$$ \{H,q^I\}=((\omega,I)+(p))q^I, $$
where $(-,-)$ denotes the Euclidean scalar product.
As by assumption 
$$(\omega,I) \neq 0 $$
and all elements of the form
$$c+(p),\ c \neq 0,\ c \in K $$
are invertible in the local ring $K[[p]]$, it follows that the eigenvalue of $\{H,-\}$ associated to $q^I$ is invertible for $I \neq 0$.
As a consequence, the kernel of $\{H,-\}$ is $K[[p]]$ and furthermore the image contains all monomials $q^I$, $I \neq 0$.
\end{proof}

 So in the non-resonant case, the image of the map $\{H,-\}$ consists exactly of the series with average equal to zero. One can express the above proposition by saying
that for a non-resonant $H \in K[[p]]$ we have an exact sequence of the form
\[ 0 \lra K[[p]] \lra A \stackrel{\{H,-\}}{\lra} A \stackrel{av}{\lra} K[[p]] \lra 0,\]
which says that kernel and cokernel of $\{H,-\}$ are both isomorphic to $K[[p]]$.
\section{Formal stability}
The following proposition states that a formal deformation of an integrable Hamiltonian remains integrable in a formal neighborhood of a non-resonant torus.

\begin{proposition} 
\label{P::NF}
Let $H+tQ \in B$ be a deformation of an integrable Hamiltonian
$$H=\sum_{ I \in \NM^n } \omega_I p^I= \sum_{i=1 }^n \omega_i p_i+(p)^2.$$ 
If $H$ is non-resonant,  then there exists a  central Poisson automorphism 
\[\p : B \to B\]
 such that
$$\p(H+tQ) \in K[[p,t]]. $$
\end{proposition}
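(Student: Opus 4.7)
The plan is a formal Lindstedt-style iteration: construct $\varphi$ as a $t$-adically convergent infinite composition of formal flows $\varphi_k:=e^{t^k\{-,S_k\}}$, where each generator $S_k \in A$ is chosen to eliminate the $q$-dependent part of the perturbation at one further order in $t$. Note that $\varphi_k$ is the formal flow of the element $t^{k-1}S_k \in B$, and hence, by the preceding proposition, a central Poisson automorphism of $B$.

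I would proceed by induction on $k\geq 1$. Assume that after applying $\varphi_{k-1}\circ\cdots\circ\varphi_1$ the transformed Hamiltonian has the shape
\[ H + t h_1 + t^2 h_2 + \cdots + t^{k-1}h_{k-1} + t^k R_k + (t)^{k+1}, \]
with $h_1,\ldots,h_{k-1}\in K[[p]]$ and $R_k\in A$. The key step is to solve the cohomological equation
\[ \{H,S_k\} = -(R_k - av(R_k)) \]
with $S_k \in A$. Since $H$ is non-resonant and the right-hand side has vanishing average, Proposition~\ref{P::formal_PBH} delivers such an $S_k$. A direct expansion then shows that $\varphi_k$ replaces the $t^k$-coefficient by $R_k + \{H,S_k\} = av(R_k) \in K[[p]]$ while only modifying terms of order $\geq t^{k+1}$ elsewhere, so the induction advances.

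To finish, I would set $\varphi := \cdots \circ \varphi_k \circ \cdots \circ \varphi_1$. Since $\varphi_k \equiv \Id \pmod{t^k}$, only finitely many factors affect the image of any element modulo a given power of $t$, so $\varphi$ is a well-defined $t$-adically continuous endomorphism of $B = A[[t]]$; its inverse is the composition of the $\varphi_k^{-1}=e^{-t^k\{-,S_k\}}$ in the opposite order, which converges for the same reason. The Poisson-morphism property and centrality ($\varphi(t)=t$, since $\{t,-\}=0$) hold at every finite truncation and therefore pass to the limit. The result is $\varphi(H+tQ) = H + \sum_{k \geq 1} t^k \, av(R_k) \in K[[p,t]]$, the desired normal form.

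The main obstacle is the solvability of the cohomological equation at each step with $S_k \in A$; non-resonance is indispensable here, and this is precisely the content of Proposition~\ref{P::formal_PBH}, once the average (which lies in $\ker\{H,-\}=K[[p]]$) has been subtracted. The remaining ingredients — the central Poisson-automorphism property of each $\varphi_k$ and the $t$-adic convergence of the infinite composition — are routine given the material developed earlier.
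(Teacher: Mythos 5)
Your proposal is correct and follows essentially the same route as the paper: an inductive elimination of the $q$-dependent part order by order in $t$, using the solvability of the cohomological equation $\{H,S_k\}=-(R_k-av(R_k))$ guaranteed by non-resonance (Proposition~\ref{P::formal_PBH}), with each correction implemented by the formal flow $e^{t^k\{-,S_k\}}$. Your explicit remark on the $t$-adic convergence of the infinite composition (each factor being $\equiv\Id \mod t^k$) is a point the paper leaves implicit, but it is the same argument.
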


\begin{proof} Let us write $(t^n) \subset B$ for the terms that are divisible by $t^n$, so we can write
 $$H+tQ=H(p)+(t)$$
 We will construct inductively a sequence of central Poisson automorphisms $\p_0,\p_1,\p_2,\ldots$ of $B$ such that 
\[ \p_n(H+tQ)=P_n+(t^{n+1}) \]
and where
$$P_n \in K[[p,t]],\;P_n=P_{n-1} +(t^n)$$
We take $\p_0=Id$, $P_0=H$. 
Assume we have constructed the sequence up to $\p_n$. Then we look to the next order in $t$ and write:
$$\p_n(H)=P_n(p,t)+t^{n+1}Q_{n+1}(q,q^{-1},p)+(t^{n+2}) .$$

By transferring the part of $Q_{n+1}$ that does not contain $q$ to the $P_n$ part (``averaging''), 
we arrive at the form
$$\p_n(H+tQ)=\widetilde{P}_n(p,t)+t^{n+1}\widetilde{Q}_{n+1}(q,q^{-1},p)+(t^{n+2}), $$
where we can assume that $\widetilde{Q}_{n+1}$ does not contain pure $p$-monomials.
As $H$ is assumed to be non-resonant, we can find $S_{n+1} \in A=K[q,q^{-1}][[p]]$ 
such that
$$\{H,S_{n+1} \}+\widetilde{Q}_{n+1}(p,q,q^{-1}) \in K[[p]].  $$
As $K$ is assumed to be of characteristic zero, the map 
$$\psi_{n+1}=e^{t^{n+1}\{-,S_{n+1}\}}$$
is a well-defined Poisson automorphism of $B$. As
$$\psi_{n+1}(f)=f+t^{n+1}\{ f,S_{n+1}\}+\frac{t^{2n+2}}{2!}\{\{ f,S_{n+1}\},S_{n+1}\}+\dots.$$
the automorphism 
\[\p_{n+1}:=\psi_{n+1} \p_n\]
has the property that
$$\p_{n+1}(H+tQ)=P_{n+1}+(t^{n+2}). $$
This proves the statement by induction on $n$.  
\end{proof}

Given a mechanical system defined by an Hamiltonian of the 
form described by the proposition, the change of variables 
constructed above reduces the motion to a quasi-periodic one. 
In fact the proposition gives the classical method for performing 
computations in celestial mechanics. However, one should notice 
that this reduction to the normal form is done by means of formal 
power series and convergence issues are not considered. 
In practice, one performs this reduction only up to a certain order, 
and the solution one obtains by truncation of higher order terms
can only be expected to be asymptotic to the real solution.

\begin{example}
 Consider the Hamiltonian function $H=p \in \CM[q,q^{-1},p]$ 
  and its deformation
 $$H'=p+tp^2+tpq+tpq^{-1}.$$
Keeping in mind that $\{p,q\}=q, \{f(p),q\}=f'(p) q$ the equations
of motions are:
 \begin{align*}
  \dot q&=\{H',q \}=q+t(q^2+2pq+1),\\
  \dot p&=\{H',p \}=t(pq^{-1}-pq)
 \end{align*}
We put
$$S=pq-pq^{-1} $$
so that
$$p^2+\{S,H\}=p^2+pq+pq^{-1} $$
is the perturbation. The symplectomorphism $\p=e^{t\{-,S\}} $ eliminates the first order term in $H'$, but creates terms of higher order in $t$:
$$\p(H')=p+tp^2-2t^2(p^2q^{-1}+p^2q+p)+\ldots $$
Hence, neglecting the terms of order $2$ in $t$, we get the quasi periodic motion

\begin{align*}
  \dot q&=(1+2t p)q,\\
  \dot p&=0
 \end{align*}
 which can easily be integrated 
 \begin{align*}
  q(\tau)&=ae^{\tau+2 b \tau t},\\
  p(\tau)&=b
 \end{align*}
where $\tau$ denotes the time.
 \end{example}

Another difficulty which appears in celestial mechanics is the 
occurrence of {\em resonances}.\index{resonance} In the resonant case, 
there exist resonance vectors $I \neq 0$ with $(\omega, I)=0$ and 
in that case the corresponding {\em resonant monomial} $q^I$ \index{resonant monomial} is not in the image of the map
$$\{ H,-\}:A \to  A .$$
The above procedure then still gives a sequence of Poisson automorphisms, but it is stopped when we arrive at the first resonant monomial: if $H$ is of the form
$$H=P_n(p)+t^{n+1}Q_{n+1}(p,q,q^{-1})+(t^{n+2}) $$
and $Q_{n+1}(p,q,q^{-1})$ contains a resonant monomial, then the equation
$$\{H,S_{n+1} \}+Q_{n+1}(p,q,q^{-1}) \in K[[p]].  $$
cannot be solved, hence we see:

\begin{proposition} 
\label{P::res}
Let $$H=\sum_{I \in \NM^n } \omega_I p^I= \sum_{i=1 }^n \omega_i p_i+(t^2) $$ 
be such that the frequencies $ \omega_1,\dots,\omega_n$ are $\ZM$-dependent. 
There exists perturbations $H+tQ$ of $H$ which  cannot be reduced to a function of 
the $p_i$'s (and $t$) by a Poisson automorphism. 
\end{proposition}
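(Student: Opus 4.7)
The natural candidate for the obstructing perturbation is a resonant monomial. Fix a resonance $I \in \ZM^n \setminus \{0\}$ with $(\omega, I) = 0$ and set $Q := q^I$; I claim that no central Poisson automorphism $\varphi : B \to B$ can bring $H + t q^I$ into $K[[p, t]]$, arguing by contradiction via the first-order term in $t$.

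Since $t$ is Casimir and $\varphi$ is central, $\varphi(t) = t$, so $\varphi$ respects the $t$-adic filtration. Writing $\varphi = \varphi_0 + t \varphi_1 + O(t^2)$ with $\varphi_0$ a Poisson automorphism of $A$, the equation $\varphi(H + t q^I) = R(p, t) \in K[[p, t]]$ reads, at orders $t^0$ and $t^1$,
\[
\varphi_0(H) = R_0(p), \qquad \varphi_1(H) + \varphi_0(q^I) = R_1(p).
\]
I first treat the base case $\varphi_0 = \mathrm{id}$, where $R_0 = H$ and $D := \varphi_1$ is a Poisson derivation of $A$. As in Example~\ref{E::torus}, every Poisson derivation of $A$ decomposes as $D = \{-, S\} + \sum_{j=1}^n a_j \partial_{p_j}$ with $S \in A$ and $a_j \in K$, the outer part corresponding to the de Rham class $H^1 = K^n$ of the algebraic torus $(\CM^*)^n$. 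Since $\sum_j a_j \partial_{p_j} H \in K[[p]]$ is absorbed into $R_1(p)$, the equation reduces to $\{H, S\} + q^I = R_1'(p)$ for some $R_1' \in K[[p]]$. Expanding $S = \sum_J s_J(p)\, q^J$ and writing $H = \sum_i \omega_i p_i + H_2$ with $H_2 \in (p)^2$, direct computation gives
\[
\{H, q^J\} = \bigl((\omega, J) + \epsilon_J(p)\bigr) q^J, \qquad \epsilon_J(p) \in (p),
\]
so the coefficient of $q^I$ on the left-hand side equals $s_I(p)\, \epsilon_I(p) + 1$ (using $(\omega, I) = 0$), whose constant term is $1 \neq 0$. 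But the right-hand side has no $q^I$-component at all, a contradiction.

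It remains to reduce the general case to $\varphi_0 = \mathrm{id}$, and this is the delicate step: I must rule out that an arbitrary Poisson automorphism $\varphi_0$ of $A$ mixes monomials so as to conjugate the obstruction away. The key point is that the constraint $\varphi_0(H) \in K[[p]]$ already forces the linearization of $\varphi_0$ at the zero section to act on the exponent lattice $\ZM^n$ by an element of $GL(n, \ZM)$ preserving the resonance sublattice $L = \{J : (\omega, J) = 0\}$. Granting this, the class $[q^I]$ in the cokernel of $\{H, -\} : A \to A/K[[p]]$ is preserved by $\varphi_0$, and the previous computation applies verbatim. Alternatively, one may recast the resonant obstruction as a class in a suitable first Poisson cohomology group of the pair $(A, H)$, where invariance under the full group of central Poisson automorphisms is built in; this is the main technical hurdle and the cleanest route to a rigorous proof.
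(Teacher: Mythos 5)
Your argument is essentially the paper's: the paper proves Proposition~\ref{P::res} simply by observing that when $Q$ contains a resonant monomial $q^I$, the first-order equation $\{H,S\}+Q\in K[[p]]$ has no solution $S$, because $q^I$ is not in the image of $\{H,-\}$ modulo $K[[p]]$; your base-case computation (constant term $s_I(p)\,\epsilon_I(p)+1=1\neq 0$ since $\epsilon_I\in(p)$) is a correct and slightly more careful version of exactly this, and your absorption of the outer derivations $\sum_j a_j\partial_{p_j}$ into the $K[[p]]$ part is also fine. The one place you go beyond the paper is in worrying about a general zeroth-order part $\varphi_0\neq\mathrm{id}$; you are right that this is needed for the literal statement (``by a Poisson automorphism'' with no tangency restriction), but you should know that the paper does not address it either --- it passes directly from ``the iteration gets stuck at the first resonant monomial'' to the proposition, and explicitly concedes that ``the proposition is of course a very weak statement.'' So your proposal matches the paper's proof where the paper has one, and honestly flags (without closing) a gap that the paper's own argument silently shares.
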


The proposition is of course a very weak statement, but nevertheless, 
the above simple fact already shows a dichotomy between the different 
tori of an integrable system. For instance the Hamiltonian
\[ H=p_1-2p_2\]
is resonant, $I=(2,1)$ is a resonance vector and we have the resonant monomial $q_1^2q_2$. 
With the perturbation $p_1-2p_2+t q_1^2q_2$ we are in the situation of Proposition~\ref{P::res}: the resonant 
monomial $q_1^2q_2$ cannot be suppressed by a formal symplectomorphism. But if we make a scaling
\[ (p_1,p_2) \mapsto ((1+\lambda)p_1,p_2) \]

with $\lambda$ irrational, we are in the situation of Proposition~\ref{P::NF}, and the
monomial can be transformed away. So already at the formal level, the situation is subtle. This is one of 
the manifestations of formal KAM theory that we will explore more in detail in a later section. 

The beautiful non-integrability theorem of Poincar\'e goes far beyond this elementary remark. It states that over $\RM$ and $\CM$, the only first integrals are, in general,  the functions of the Hamiltonian itself. 

This non-integrability theorem can be understood in terms of global analysis using transversality arguments: 
we fix a time $T$ and consider the space $\Omega(M)$ of $T$-periodic loops in $M$. 
These $T$-periodic orbits are exactly the critical points of the action functional 
$$\Omega(M) \to \RM,\;\;\; \gamma \mapsto \int_{\gamma} pdq .$$
Using Smale's transversality theorem, one can show that for a general $H$, the critical points of 
this function are isolated, hence the periodic orbits with period $T$ are isolated.
If there were an independent first integral $F$, the flow of $F$ would transform each $T$-periodic 
orbit into a one-parameter family of $T$-periodic orbits, contradiction the fact that they are isolated. Therefore the flow of $F$ should be stationary at all periodic orbits and Poincar\'e's theorem reduces to showing that $F$ is then a function $H$.

\section{Analytic model  and holomorphic Fourier series}
Our analysis of the formal case reveals the inherent algebraic structures involved. 
But  the problem we really want to deal with,  is the analytic and not the formal case. We will describe a particular Poisson-algebra 
\[ \CM\{q,q^{-1},p\}\]
that is of importance for the Kolmogorov invariant torus theorem.
We start with some preparations.\\

Consider a $2\pi$-periodic function $f$ that is holomorphic on a 
neighbourhood of the real axis. It has an expansion into a (convergent)
Fourier series
\[ f(\theta)= \sum_{n\in \ZM} a_n e^{i n \theta } .\]
Due to the compactness of the interval $[0,2\pi]$, the function $f$ is 
in fact holomorphic on a strip
\[B_t:=\{ \theta=x+iy \in \CM : x \in \R,\ |y| < t \} .\]
There exist a well-known relation between the rate of vanishing of the
coefficients $a_n$ and the width $t$ of the strip.
 
\begin{proposition} The function $f$ extends holomorphically to $B_t$ if 
and only if
 \[ |a_n| = O(e^{-|n|s})\]
for any $s <t$.
\end{proposition}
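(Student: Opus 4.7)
The plan is to prove each direction by essentially the same computation, namely by writing the Fourier coefficient as a complex contour integral and exploiting the freedom to deform the contour vertically inside the strip of holomorphy.

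For the necessity direction, I would start from the standard integral representation
\[ a_n = \frac{1}{2\pi} \int_0^{2\pi} f(\theta) e^{-in\theta}\, d\theta. \]
Assuming $f$ extends holomorphically to $B_t$, fix $s<t$. The integrand $f(\theta)e^{-in\theta}$ is holomorphic in $B_t$ and $2\pi$-periodic in $\text{Re}(\theta)$, so by Cauchy's theorem applied to a thin rectangle $[0,2\pi]\times[0,y]$ (the vertical sides cancel by periodicity) one may shift the integration contour to the horizontal line $\text{Im}(\theta)=y$ for any $|y|<t$. Choosing $y=-s$ when $n\ge 0$ and $y=+s$ when $n<0$ and estimating trivially yields
\[ |a_n| \le M(s)\, e^{-|n|s}, \qquad M(s) := \sup_{|y|\le s,\ x\in\R}|f(x+iy)|, \]
and $M(s)$ is finite by compactness (periodicity in $x$ together with continuity on the closed strip $|y|\le s$).

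For the sufficiency direction, assume $|a_n|=O(e^{-|n|s})$ for every $s<t$. Given any $\theta_0=x_0+iy_0\in B_t$, pick $s$ with $|y_0|<s<t$; then on a neighbourhood of $\theta_0$ the general term of the series $\sum_n a_n e^{in\theta}$ is dominated by a constant multiple of $e^{-|n|(s-|y_0|-\varepsilon)}$, which is the term of a convergent geometric series. Hence the Fourier series converges normally on compact subsets of $B_t$; as a uniform limit of holomorphic functions it defines a holomorphic function $\widetilde f$ on $B_t$. On the real axis $\widetilde f$ reproduces the Fourier series of $f$, so $\widetilde f=f$ there, and by the identity principle $\widetilde f$ is the holomorphic extension of $f$ to $B_t$.

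No single step is genuinely hard, but the one place that deserves care is the contour shift in the necessity direction: one has to justify that the hypothesis ``$f$ is holomorphic on a neighbourhood of $B_t$'' (strictly, on $B_{t'}$ for some $t'>s$ after shrinking) gives the uniform bound $M(s)<\infty$ needed to estimate the shifted integral, and one must match the sign of the vertical shift to the sign of $n$ so that $e^{-iny}$ produces the decay $e^{-|n|s}$ rather than blow up. Once those bookkeeping choices are made, the rest of the argument is routine.
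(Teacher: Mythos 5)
Your proof is correct, but the necessity direction takes a genuinely different route from the paper. For the bound $|a_n|=O(e^{-|n|s})$, the paper works in the Hilbert space $L^2([0,2\pi]\times[-s',s'])$ for an intermediate $s'\in\,]s,t[$: the exponentials $e_n=e^{in\theta}$ are orthogonal there, the relation $(f,e_n)=a_n(e_n,e_n)$ together with Cauchy--Schwarz gives $|a_n|\le \|f\|/\|e_n\|$, and the explicit computation of $\|e_n\|$ yields the exponential decay. You instead use the classical Paley--Wiener-style contour shift: represent $a_n$ by the Fourier integral over $[0,2\pi]$, move the contour to $\mathrm{Im}\,\theta=\mp s$ (sign matched to the sign of $n$, with the vertical sides of the rectangle cancelling by periodicity), and estimate trivially by the sup norm $M(s)$ on the closed substrip. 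Both arguments are complete and elementary; yours gives the constant as a sup norm and is perhaps the more standard textbook route, while the paper's avoids contour deformation altogether at the cost of introducing the auxiliary width $s'$ and an $L^2$ computation. The sufficiency direction (normal convergence of $\sum a_n e^{in\theta}$ on compact subsets of the strip, hence holomorphy of the sum, which agrees with $f$ on the real axis) is essentially identical in both treatments; your explicit appeal to the identity principle to conclude that the sum extends $f$ is a small point the paper leaves implicit.
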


\begin{proof}
Let $s < t$ and pick any $s' \in ]s,t[$. Consider the Hilbert space $L^2([0,2\pi] \times [-s',s'],\CM)$ with Hermitian scalar product
\[(f,g):=\frac{1}{2\pi}\int_{[0,2\pi] \times [-s',s']} f \cdot \overline{g}\ dx dy\;  .\]
The functions $e_n:=e^{in\theta}=e^{inx-ny}$ form an orthogonal set with 
\[ (e_n,e_n) =\frac{1}{2\pi} \int_{-s'}^{s'} \int_0^{2\pi} e^{inx-ny} e^{-inx-ny} dx dy = \frac{e^{2ns'}-e^{-2ns'}}{2n}\;. \]
From the expansion of $f$ as Fourier-series 
\[ f =\sum_{n \in \ZM} a_n e^{in\theta}\]
we get
\[ (f,e_n) =a_n (e_n,e_n) .\]
Hence we obtain
\[ |a_n| (e_n,e_n)=|(f,e_n)| \le \|f\| \|e_n\| \]
or 
\[ |a_n|  \le \frac{\|f\|}{\|e_n\|}\le C e^{-|n|s}\]
for an appropriate choice of $C$. Conversely, if the coefficients decrease 
exponentially, then the Fourier series is convergent inside some compact 
subset of the strip and thus defines a holomorphic function.
\end{proof}

Now if the Fourier series $f=\sum_{n \in \ZM} a_n e^{in\theta}$ is analytic in a strip $B_t$, the 
associated series
\[\sum_{n \in \ZM} a_n q^n  \]
is analytic in the annulus
\[ \{ q \in \CM\;\;|\;\; e^{-t} <|q| < e^{t} \}\] 
These annuli form a fundamental system of neighbourhoods of the circle
\[ S^1:=\{ q\in \CM\;\;|\;\;|q|=1\} .\]
\vskip0.3cm
\begin{figure}[htb!]
\includegraphics[width=9cm]{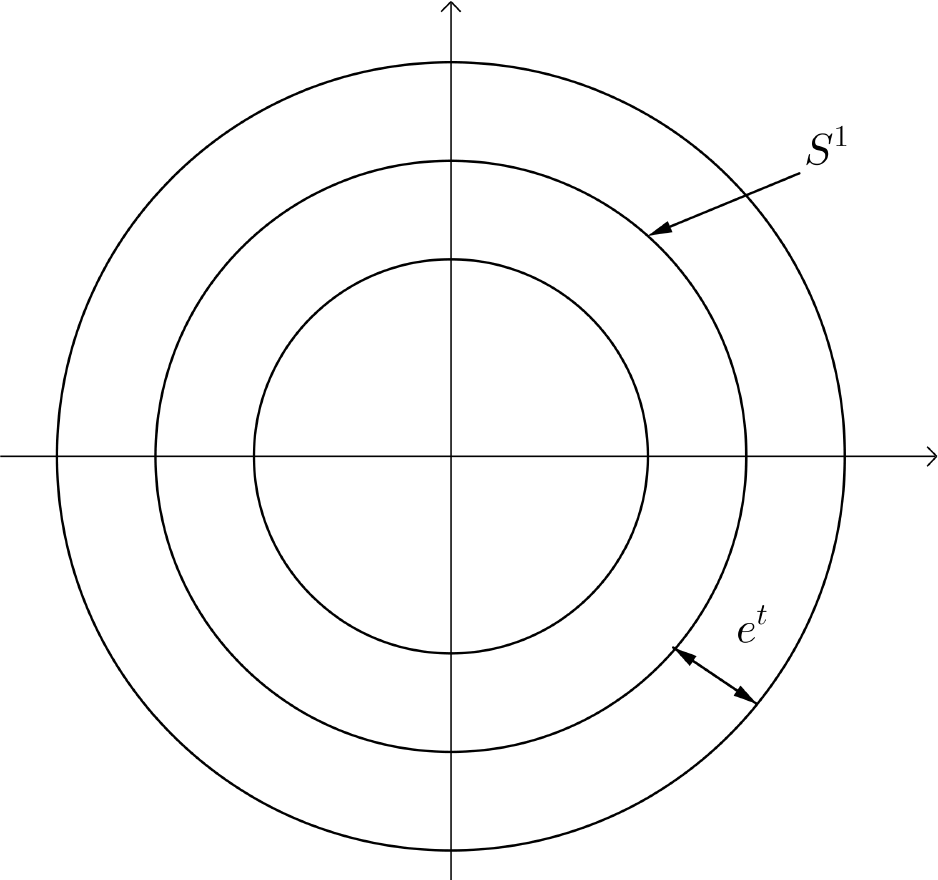}
\end{figure}
\vskip0.3cm
\begin{definition}
We put
\[\CM\{ q,q^{-1}\}:=\{ \sum_{n=-\infty}^{\infty} a_n q^n\;\;|\;\exists t>0 \;\;\forall n: |a_n| \le O(e^{-|n|t})\;\}\] 
and call it {\em the ring of holomorphic Fourier series}\index{ring of holomorphic Fourier series}.\index{holomorphic Fourier series}
\end{definition}
The above discussion shows that $\CM\{q,q^{-1}\}$ can be identified with algebra of
germs of holomorphic functions along the unit circle.\\


This notion extends to $n$ variables $q_1,\dots,q_n$ in an obvious way.
We will be concerned with the Poisson algebras
\[A:=\CM\{q,q^{-1},p\},\;\;\;B:=\CM\{q,q^{-1},p,t\}\]
with Poisson-bracket
\[ \{p_k,q_j\}=q_j\delta_{kj}\]
and $t$ a central element.
In terms of the cotangent space $T^*X$ to the algebraic torus $X:=(\CM^*)^n$, we
can say that the algebra $A$ consists of the germs of holomorphic functions on
$T^*X$ along the central $n$-torus 
\[ (S^1)^n=\{(q_1,q_2,\ldots,q_n) \in (\CM^*)^n\;\;|\;\;|q_j|=1\} \subset X . \]

\section{First order analytic obstruction}
In a previous section we saw that for a non-resonant $H \in K[[p]]$ the operator
\[ L=\{H,-\}:K[q,q^{-1}][[p]] \to K[q,q^{-1}][[p]] \]
is diagonal in the monomial basis with kernel and cokernel isomorphic to $K[[p]]$ and series with vanishing average 
\[ \{ \sum_{I \neq 0,J}a_I q^I p^J \} \]
as image. We showed that this fact implies that any perturbation of a non-resonant integrable Hamiltonian can formally be transformed to the integrable normal form.

As Poincar\'e already observed, an analogous statement does not hold at an analytic level. In fact, if
we consider again an analytic integrable Hamiltonian $H \in \CM\{p\}$, then the analogous operator
\[ L^{an}:=\{H,-\}:\CM\{q,q^{-1},p\} \to \CM\{q,q^{-1},p\} \]
is of great complexity. Let us give an example and consider the Hamiltonian
\[H=p_1+\sqrt{2}p_2+\frac{1}{2}p_2^2\]
The frequency vector 
$$\omega:=(\frac{\partial H}{\partial p_1},\frac{\partial H}{\partial p_2})=(1,\sqrt{2}+p_2) $$
depends linearly on $p_2$ and we find:
$$L^{an}(q_1^mq_2^{-n})=(m-n(\sqrt{2}+p_2))q_1^mq_2^{-n} $$
and so the preimage under $L^{an}$ 
\[ \frac{1}{m-n(\sqrt{2}+p_2)} q_1^m q_2^{-n}\]
of the monomial $q_1^m q_2^{-n}$ has a pole at
\[ p_2^0:=\frac{m}{n}-\sqrt{2}\]
which is near the origin if $m/n$ is close to $\sqrt{2}$. By taking any (non-polynomial) convergent power series with  
such monomials:
$$\sum_{m,n \geq 0} \a_{nm} q_1^mq_2^{-n} $$ 
we get an analytic series which is {\em not in the image of $L^{an}$}. So contrary to what happens in the formal case, the image of $L^{an}$ misses many non-trivial analytic functions and not only the the series with non-zero average, that is, series depending only on the $p$ variables. The image is therefore much 
smaller and  it is non-trivial to see if a series belongs to it.

Let us return to the general case and consider a Hamiltonian
\[H=\sum_{i=1}^n \omega_i p_i + (p)^2 \in \CM\{p\} .\]
and a given perturbation
\[ H+tQ, \;\;\;S \in B \]
We try to find, as in the formal case, a Poisson-automorphism of the form
$$\p: B \to B,\;\;\;\p(f)=e^{t\{f,S\}}=f+t\{f,S\}+(t^2)$$ 
that transforms $H+tQ$ to an element of $B$ that is independent of the $q_i$. 
So we find
$$\p(H+tQ)=H+t \{ H, S \}+tQ+(t^2), $$
that is, we need to find $ S \in B$ such that
$$ \{ H,S\}= - Q .$$
As we indicated above, this is very delicate condition.

However, this equation reduces drastically by restricting both sides to $p=0$: 
we get a new equation
 \[ \{ H,S \}_{\mid p=0}=-Q(q,p=0)  \]
 
where $Q(q,p=0) \in  \CM\{q,q^{-1}\}$. 
This equation reduces to
 $$ L_0( S )=\sum_{i=1}^n \omega_i q_i\d_{q_i} S=  -Q(q,p=0)=:g $$ 
where 
\[L_0=\{ \sum_{i=1}^n \omega_i p_i,-\}\]
is the linear part of the Hamiltonian derivation. 

We will consider the ring of holomorphic Fourier series as sitting inside the space of 
all {\em formal Fourier series}\index{formal fourier series}.
\[\CM\{ q,q^{-1}\} \subset \CM[[q,q^{-1}]]:=\{\sum_{I \in \ZM^n} a_I q^I\;\;|\;\;\; a_I \in \CM\}\,.\]
In general, two series in $\CM[[q,q^{-1}]]$ can not be multiplied in the usual way, so it is not
a ring, but only a vector space. But it will still be useful to to equip  $\CM[[q,q^{-1}]]$  with 
the {\em coefficient-wise product} $\star$:
\[ \sum_I a_I q^I \star \sum_I b_I q^I =\sum_I a_Ib_I q^I\]
called the {\em Hadamard} or {\em convolution product}.\index{convolution} \index{Hadamard product}

We see that the operator $L_0$ is equal to taking the Hadamard product with the function
 $$l=\sum_{I \in \ZM^n \setminus \{ 0 \} } (\omega,I) q^I. $$
\[ L_0(S)=l \star S =g\]
This equation is solved for $S$ as
$$S= h \star g $$
with
$$h:=\sum_{I \in \ZM^n \setminus \{ 0 \} } (\omega,I)^{-1} q^I ,$$
the Hadamard-inverse of $l$.

There is an obvious restriction: $ (\omega,I)$ should not be equal to zero for any $I \neq 0$, 
otherwise $h$ is not defined. This is exactly the non-resonance condition. 
But even if this condition is satisfied, it might happen  $h \star g$ is not an element of
the ring $\CM\{q,q^{-1}\}$ of holomorphic Fourier series.

We thus see that for homomorphic Fourier series there is an obstruction:
$ h\star f$ has to be holomorphic for any $f \in \CM\{q,q^{-1} \}$.

This first order obstruction of analytic nature can be easily identified:
 \begin{proposition}
 \label{P::Hadamard}
 The Hadamard product 
  $$h\star:\CM[[q,q^{-1}]] \to \CM[[q,q^{-1}]],\ f \mapsto h \star f $$
  with the series
 \[h =\sum _{I \in \ZM^n} a_I q^I \in \CM[[q,q^{-1}]]\]
maps the sub-algebra $\CM\{q,q^{-1}\}$ to itself if and only for any $s >0$  
there exists a constant $C=C(s)$ such that the Fourier coefficients of $h$ satisfy the 
estimate:
 \[ |a_I| \leq Ce^{|I|s} .\]
 \end{proposition}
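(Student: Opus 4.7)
The plan is to exploit the concrete characterization of $\CM\{q,q^{-1}\}$ recalled earlier: a formal Fourier series $\sum b_I q^I$ lies in $\CM\{q,q^{-1}\}$ if and only if its coefficients admit an exponential bound $|b_I| \le K e^{-|I| t}$ for some $t>0$ and $K>0$, where $|I|:=\sum_j |i_j|$. Both implications then reduce to elementary manipulations of exponentials on the coefficient level, because the Hadamard product is diagonal in the monomial basis.

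For the sufficient direction, I would take any $f = \sum b_I q^I \in \CM\{q,q^{-1}\}$ and fix $t>0$, $K>0$ with $|b_I| \le K e^{-|I| t}$. Applying the hypothesis on $h$ with the choice $s = t/2$ yields a constant $C=C(t/2)$ satisfying $|a_I| \le C e^{|I| t/2}$, and therefore
\[ |a_I b_I| \le CK\, e^{-|I| t/2} . \]
This exhibits $h\star f$ as a holomorphic Fourier series on a (slightly smaller) polyannulus, so $h\star f \in \CM\{q,q^{-1}\}$.

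For the necessary direction I would argue contrapositively. Suppose the growth estimate fails at some $s_0>0$, meaning the ratios $|a_I|/e^{|I| s_0}$ are unbounded. Choose multi-indices $I_k$, which may be assumed distinct, with $|a_{I_k}| \ge k\, e^{|I_k| s_0}$. Since the family $\{a_I\}$ is automatically bounded on any finite set of indices, we must have $|I_k|\to\infty$ (after passing to a subsequence). Fix any $\delta \in (0,s_0)$ and set
\[ f := \sum_{k} e^{-|I_k|(s_0-\delta)}\, q^{I_k} . \]
Then $f\in\CM\{q,q^{-1}\}$ because its coefficients decay at the positive rate $s_0-\delta$. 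On the other hand, the Fourier coefficient of $h\star f$ at the index $I_k$ satisfies
\[ |a_{I_k}|\, e^{-|I_k|(s_0-\delta)} \ge k\, e^{|I_k| \delta}, \]
which is unbounded as $k\to\infty$. Hence $h\star f$ cannot satisfy any exponential decay estimate and therefore does not lie in $\CM\{q,q^{-1}\}$, contradicting the assumption that $h\star$ preserves $\CM\{q,q^{-1}\}$.

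The only point requiring genuine care is the extraction step in the contrapositive: the failure of the growth condition a priori only produces a sequence of bad indices, and one has to rule out that they accumulate in a finite set. This is immediate from the trivial boundedness of the family $\{a_I\}_{|I|\le N}$ for any $N$, which forces $|I_k|\to\infty$. Everything else is routine exponential bookkeeping, and no subtle analytic input (Cauchy estimates, dominated convergence, etc.) is needed beyond the coefficient characterization of $\CM\{q,q^{-1}\}$.
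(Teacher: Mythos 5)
Your proof is correct and follows essentially the same strategy as the paper: both directions are pure coefficient bookkeeping with a test series, and your sufficiency argument is identical to the paper's. For the necessity direction the paper is slightly more direct --- it simply applies $h\star$ to the full series $f=\sum_I e^{-|I|s}q^I\in\CM\{q,q^{-1}\}$ for each $s>0$ and reads off $|a_I|e^{-|I|s}=O(1)$, so your contrapositive with the sparse test series supported on bad indices (and the attendant extraction of $|I_k|\to\infty$) is valid but does more work than necessary.
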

\begin{proof}
Assume that the coefficient $a_I$ of $h$ satisfy the above estimate and take any
element 
$$f(q)=\sum_I b_I q^I \in \CM\{q,q^{-1}\}.$$ 
By definition of $\CM\{q,q^{-1}\}$, there exists $t$ such that
$$|b_I|=O(e^{-|I|t})  .$$ 
Take $s<t$, we get that
$$|a_Ib_I|=O(e^{-|I|(t-s)}) .$$
Thus $h \star f \in  \CM\{q,q^{-1}\}$. 
Conversely if $h \star$ preserves the sub-algebra $\CM\{q,q^{-1}\}$, then 
consider for any $s >0$ the holomorphic series
$$f(q)=\sum_I e^{-|I|s} q^I \in \CM\{q,q^{-1}\}.$$ 
We have $h \star f \in  \CM\{q,q^{-1}\}$, thus
$$ |a_Ie^{-|I|s}|=O(1).$$
Hence the coefficients of $h$ satisfy an estimate $|a_I| \le C e^{|I|s}$. 
\end{proof}

 
\section{Bibliographical notes}

It is difficult to trace the birth of perturbation theory, but it is clear that the book of Poincar\'e has been an essential step:\\

{\sc Poincar\'e, H.}, {\em Les M\'ethodes Nouvelles de la M\'ecanique C\'eleste I}, Gauthier Villars. Paris  (1892).

Poincar\'e discovered the non-integrability of the three body problem in:\\

 {\sc H. Poincar\'e}, {\em Sur le probl\`eme des trois corps et les \'equations de la dynamique}, {Acta mathematica} {\bf 13}(1), 3-270, (1890).\\

Fermi gave a generalisation of the Poincar\'e divergence theorem in~:\\

{\sc E. Fermi}, {\em Dimostrazione che in generale un sistema meccanico \`e quasi ergodico,} Il Nuovo
Cimento {\bf  25}, 267 -269, (1923).\\

See also~:\\

{\sc C.L. Siegel}, {\em On the integrals of canonical systems}, {Annals of Mathematics} {\bf  42} (3), 
806-822, (1941).\\

{\sc G. Benettin, G. Ferrari, L. Galgani L et  A. Giorgilli}, {\em An extension of the Poincar\'e-Fermi theorem on the nonexistence of invariant manifolds in nearly integrable Hamiltonian systems.} Il Nuovo Cimento B, {\bf 72}(2), 137 - 148, (1982).\\

 
\chapter{The Kolmogorov invariant torus theorem}
In this chapter we take a closer look at the the non-resonance condition and
introduce the so called {\em Diophantine condition}, and show that the set
of Diophantine frequency vectors has full measure. Then we formulate 
Kolmogorov's theorem on the existence of invariant tori and reinterpret the 
theorem in terms of infinite dimensional group actions.
 
\section{Kolmogorov's  Diophantine condition}
We have seen that in the context of perturbation theory we are given a frequency 
vector
\[ \omega=(\omega_1,\omega_2,\ldots,\omega_n)\]
and we have to consider the Euclidean scalar product
\[(\omega,I)\]
with lattice vectors $I \in \ZM^n$. In the non-resonant case, the hyperplane
$\omega^{\perp}$ orthogonal to $\omega$ intersects the lattice $\ZM^n$ only at 
the origin. But of course, there will always be be lattice points that are
very close to the hyperplane. 
As a result, the series
\[ h= \sum_I (\omega, I)^{-1} q^I\]
may have coefficients that grow very fast if $|I|$ becomes big: 
{\em the problem of small denominators}.
We saw that the Hadamard product with a formal Fourier series 
\[ h=\sum a_I q^I\]
will preserve the space of analytic Fourier series if the coefficients $a_I$   
grow not faster than exponentially with $|I|$.  

Rather than studying types of sub-exponential growth, we consider the simpler case 
of polynomial growth:

\begin{definition} A vector $\omega=(\omega_1,\dots,\omega_n)$ satisfies  
Kolmogorov's Diophantine condition $K(C,\nu)$ if for all $0 \neq I \in \ZM^n$
$$  |(\omega,I)|\geq \frac{C}{\| I \|^{n-1+\nu}}.$$
Here $C>0$ and  $\nu \in \RM$. (The shift by $n-1$ is conventional).
We say that $\omega$ satisfies {\em Kolmogorov's condition}\index{Kolmogorov's Diophantine condition} if it
satisfies $K(C,\nu)$ for some $C$ and $\nu$.
\end{definition}

The condition $K(C,\nu)$ means that the growth is bounded by a 
polynomial function:
$$ |(\omega,I)|^{-1}\leq \frac{\| I \|^{n-1+\nu}}{C}. $$

In particular, Proposition~\ref{P::Hadamard} implies that if $\omega$ satisfies
Kolmogorov's Diophantine condition, then the endomorphism
$$h\star:\CM[[q,q^{-1}]] \to \CM[[q,q^{-1}]],\ f \mapsto h \star f,\ h:=\sum_{I \neq 0} (\omega,I)^{-1}q^I $$
preserves the sub-space $\CM\{q,q^{-1}\}$ of holomorphic Fourier series:
the first analytic obstruction vanishes.\\
\section{Diophantine approximation}
The question how small $|(\omega,I)|$ can be belongs to the field of {\em Diophantine approximation} \index{Diophantine approximantion}.
Let us first look at the case $n=2$ and $\omega=(1,\alpha)$,  $I=(p,-q)$. Then we have
\[ |(\omega,I)|=|q\alpha-p|,\]
so this becomes small if the rational number $\frac{p}{q}$ is a good approximation to $\alpha$.
If we subdivide the unit interval in $N+1$ sub-intervals of length $1/(N+1)$,
then the fractional part of the $N$ numbers
\[ \alpha, 2\alpha,3\alpha,\ldots,N\alpha\]
all fall in different intervals if $1/(N+1) < \a$. As a consequence, at least one falls in the first or last interval, which means that one of these numbers differs by less than $\le 1/(N+1)$ from an integer.\footnote{This is Dirichlet's {\em pigeon hole principle.}}
So for all $\alpha$ there exist
infinitely many integers $p,q$ so that
\[ |q\alpha-p| < \frac{1}{q},\;\;|\alpha-\frac{p}{q}| <\frac{1}{q^2} .\]
In fact, such good rational approximations can be obtained from the continued fraction expansion of $\alpha$.
In higher dimension, one can show similarly that for any $\omega \in \RM^n$, 
there are always approximations such 
$$ \forall C>0, \exists I \in \ZM^n,\  |(\omega,I)|\leq \frac{C}{\| I \|^{n-1}} .$$

In the theory of Diophantine approximation, one usually says that the vector admits {\em very good rational approximations} if one can find an exponent bigger than $n-1$. Kolmogorov's Diophantine condition goes in the opposite direction.

It is easy to construct vectors which do not satisfy Kolmogorov's Diophantine condition. 
Take: 
\[\alpha=\sum_{j \geq 0}10^{-j!} .\]
The rational numbers:
$$ r_k=\sum_{j = 0}^k10^{-j!} $$ 
satisfy:
$$| \alpha-r_k| \leq 2\cdot 10^{-(k+1)!}. $$
The vector
$$\omega=(1,\a) $$
admits very good rational approximations. To see it put:
$$\b_k=(\sum_{j = 0}^k10^{k!-j!},10^{k!}) \in \ZM^2.$$
Then
$$(\omega,\b_k)=10^{k!}\sum_{j  \geq  k+1}10^{-j!} =O(10^{k!-(k+1)!})$$
and
$$\| \b_k \| =O(10^{k!}) $$
therefore, for any $\nu>0$, the sequence
$$(\omega,\b_k)\| \b_k\|^{1+\nu}=O(10^{(\nu-k)k!}) $$
converges to zero.

Thus the vector $\omega$ does not satisfy Kolmogorov's Diophantine condition.
On the other hand, a similar computation shows that for any $I \in \ZM^n$ with
$$10^{k!} \leq |I | < 10^{(k+1)!} $$
we have
$$\left| (\omega,I ) \right| \geq 10^{-kk!}. $$
Therefore
$$\left| (\omega,I)^{-1}\right| \leq  10^{-kk!} \leq e^{10^{k!}} \leq e^{|I|}.$$
In particular, Proposition~\ref{P::Hadamard} implies that although $\omega$ does not satisfies
Kolmogorov's Diophantine condition, the endomorphism
$$h\star:\CM[[q,q^{-1}]] \to \CM[[q,q^{-1}]],\ f \mapsto h \star f,\ h:=\sum_{I \neq 0} (\omega,I)^{-1}q^I $$
still preserves the space of holomorphic Fourier series.\\

\section{Vectors satisfying the Kolmogorov Diophantine condition}
Kolmogorov's Diophantine condition is not a necessary condition for having an invariant torus theorem, but is sufficiently weak to allow the vectors satisfying it to form a set of positive measure. The condition also has also an abstract meaning in terms of operators as we 
shall see later. For the moment, we continue to investigate how many vectors satisfy the condition.\\

According to the well-known theorem of Liouville, for any algebraic non-rational 
number 
\[\alpha \in \overline{\QM} \setminus \QM\]
of degree $d$ then for some $C >0$ one has
\[ |\alpha-\frac{p}{q}| \ge \frac{C}{q^d}\, ,\]
so the vector $(1,\alpha)$  satisfies Kolmogorov's Diophantine condition. 
Therefore the set of vectors which satisfy the condition obviously form a dense 
subset. But in fact one can prove that this set even has full measure.
Let us denote by
\[ \Omega(C,\nu):=\{ \omega \in \RM^n: \forall 0 \neq I \in \ZM^n,\ |(\omega,I)|\geq \frac{C}{\| I \|^{n-1+\nu}} \}  \]
the set of vectors that satisfy Kolmogorov's condition $K(C,\nu)$.

\begin{proposition} Assume that $\nu>0$. Then for all $R, C >0$ there is 
a constant $k:=k(\nu,R)$ such that
\[   Vol(B_R \setminus \Omega(C,\nu)) \le k \cdot C \]
Here $Vol$ is the Lebesgue measure and $B_R$ the ball of radius $R$.
\end{proposition}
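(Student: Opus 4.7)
The plan is to bound the complement $B_R \setminus \Omega(C,\nu)$ by the union, over all nonzero lattice vectors $I$, of the ``resonant slabs'' that each $I$ excludes, then to sum the individual volumes of these slabs.

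First I would write
\[ B_R \setminus \Omega(C,\nu) \;=\; \bigcup_{I \in \ZM^n \setminus \{0\}} S_I, \qquad S_I \;:=\; \Bigl\{\omega \in B_R : |(\omega,I)| < \tfrac{C}{\|I\|^{n-1+\nu}}\Bigr\}. \]
Each $S_I$ is the intersection of $B_R$ with a slab of thickness $2C/\|I\|^{n+\nu}$ around the hyperplane $I^\perp$ (the thickness is computed by noting that $|(\omega,I)|/\|I\|$ is the distance from $\omega$ to $I^\perp$). Estimating the volume of the intersection by the thickness of the slab times the $(n-1)$-dimensional volume of the cross-section $I^\perp \cap B_R$ gives
\[ Vol(S_I) \;\le\; V_{n-1}R^{n-1} \cdot \frac{2C}{\|I\|^{n+\nu}}, \]
where $V_{n-1}$ is the volume of the unit ball in $\RM^{n-1}$.

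Next I would sum this estimate over $I \neq 0$. Grouping lattice points by the integer part of $\|I\|$ and using that there are $O(m^{n-1})$ lattice points with $\|I\| \in [m,m+1]$, the series
\[ \sum_{I \in \ZM^n \setminus \{0\}} \frac{1}{\|I\|^{n+\nu}} \;\lesssim\; \sum_{m \ge 1} \frac{m^{n-1}}{m^{n+\nu}} \;=\; \sum_{m \ge 1} \frac{1}{m^{1+\nu}} \]
converges precisely because $\nu > 0$. Setting
\[ k(\nu,R) \;:=\; 2V_{n-1}R^{n-1}\sum_{I \in \ZM^n \setminus \{0\}} \frac{1}{\|I\|^{n+\nu}} \]
and using subadditivity of Lebesgue measure yields $Vol(B_R \setminus \Omega(C,\nu)) \le k(\nu,R)\cdot C$, as claimed.

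The only real subtlety is the role of the exponent shift $n-1$ in Kolmogorov's condition: it is exactly what makes the per-$I$ volume decay like $\|I\|^{-(n+\nu)}$ rather than $\|I\|^{-(1+\nu)}$, and hence exactly what renders the lattice sum convergent for any positive $\nu$. No further issue arises, because the slabs corresponding to very small $\|I\|$ (where the bound on $Vol(S_I)$ may exceed $Vol(B_R)$) contribute only finitely many terms and do not spoil the convergence.
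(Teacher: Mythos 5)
Your proof is correct and follows essentially the same route as the paper: decompose the complement into resonant slabs of thickness $2C/\|I\|^{n+\nu}$ around the hyperplanes $I^{\perp}$, bound each volume by the cross-section times the thickness, and sum the resulting series $\sum_{I\neq 0}\|I\|^{-(n+\nu)}$, which converges for $\nu>0$. The only difference is cosmetic: the paper phrases the estimate for a general radial weight $f(I)$ before specialising, while you justify the convergence of the lattice sum in more detail by counting lattice points in shells.
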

\begin{proof}

 \vskip0.3cm  \begin{figure}[htb!]
  \includegraphics[width=10cm]{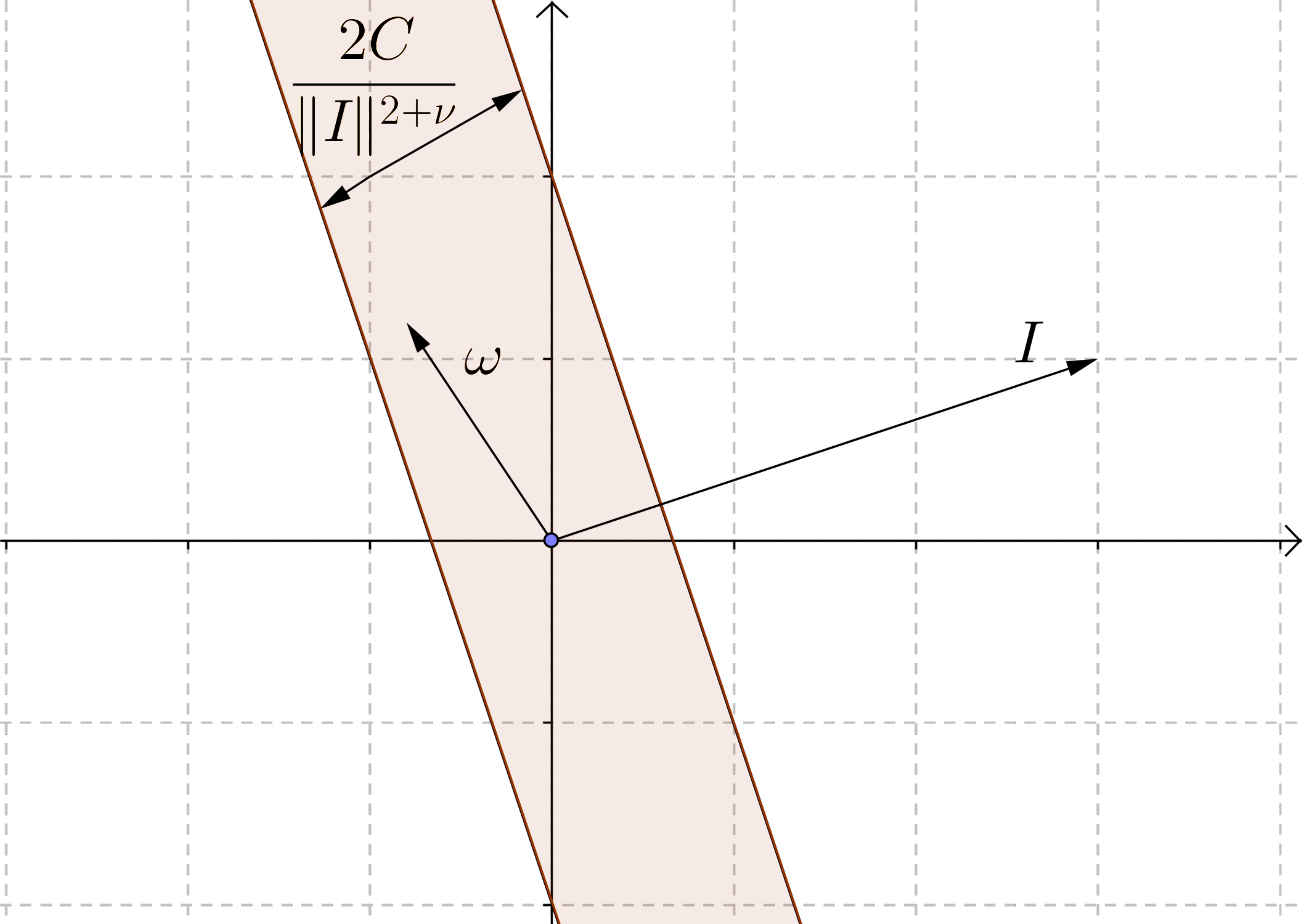}
\end{figure} \vskip0.3cm  

We consider a slightly more general situation.\\
 Let 
 $$f:\ZM^n \to \RM$$
 be  a function that only depends on the Euclidean length $\|I\|$ of $I \in \ZM^n$.  Consider the set
\[D(C):=\{ \omega \in B_R\;|\;\;\; \forall 0 \neq I \in \ZM^n:\;\;\; |(\omega,I)| \ge  C f(I)\} .\]
The complement of this set in $B_R$ can be written as
\[D(C)^c=\bigcup_{0 \neq I \in \ZM^n} B(C,I)\]
where $B(C,I)$ is the set of $\omega \in B_R$ that are 'bad' for $C$ and $I$:
\[ B(C,I):=\{ \omega \in B_R\;|\; |(\omega,I)| < C f(I)\}\]
These $\omega$'s lie in a  band of width
\[ 2 Cf(I)/\|I\|\]
around the intersection of hyperplane $I^{\perp}$ with the ball $B_R$.
So the volume of $B(C,I)$ can be bounded by
\[ Vol(B(C,I)) \le  2 V_{n-1}(R).C.f(I)/\|I\|,\]
where $V_{n-1}(R)$ is the volume of the $n-1$-dimensional radius $R$-ball.
Now if the lattice sum 
\[ \sum_{0 \neq I \in \ZM^n} f(I)/\|I\|\]  
exists and is finite, then the volume of $\cup_{I \in \ZM^n} B(C,I)$ is bounded by
\[Vol\left(\bigcup_{0 \neq I \in \ZM^n} B(C,I)\right) \le k.C, \;\;\;k=2 V_{n-1}(R) \sum_{0 \neq I \in \ZM^n} f(I)/\|I\| .\]
Now taking the intersection over $C>0$ gives
\[Vol(D^c) \le \lim_{C\to 0} K C =0 .\]
Applying this to 
\[ f(I):=1/\|I\|^{n-1+\nu}\]
gives the result, as for any $\nu >0$ one has
\[\sum_I 1/\|I\|^{n+\nu} < \infty .\]

\end{proof}

\begin{corollary}
For $\nu >0$ the complement of the set \[\Omega(\nu)=\bigcup_{C>0} \Omega(C,\nu)\]
has measure zero.
\end{corollary}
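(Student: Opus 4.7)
The plan is to deduce the corollary directly from the preceding proposition by first localising to a ball, letting the Diophantine constant $C$ tend to $0$, and then exhausting $\RM^n$ by a countable family of balls.

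First I would observe the monotonicity $\Omega(C,\nu) \subseteq \Omega(C',\nu)$ whenever $C \geq C' > 0$: making $C$ smaller relaxes the Diophantine condition $|(\omega,I)| \geq C/\|I\|^{n-1+\nu}$. Consequently $\Omega(\nu) = \bigcup_{C>0} \Omega(C,\nu)$ is obtained as a monotone union and we can replace the union over all $C>0$ by the countable union along a sequence $C_m \to 0^+$, so that measurability causes no issue.

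Next, fix $R>0$. Then
\[
B_R \setminus \Omega(\nu) \;=\; \bigcap_{C>0} \bigl( B_R \setminus \Omega(C,\nu) \bigr),
\]
a decreasing intersection of measurable sets. By the previous proposition,
\[
\mathrm{Vol}\bigl(B_R \setminus \Omega(C,\nu)\bigr) \;\leq\; k(\nu,R)\cdot C
\]
for every $C>0$. Letting $C\to 0^+$ along the sequence $C_m$ and using continuity of measure from above (which is valid since $\mathrm{Vol}(B_R \setminus \Omega(C_1,\nu))$ is finite), we conclude
\[
\mathrm{Vol}\bigl(B_R \setminus \Omega(\nu)\bigr) \;\leq\; \lim_{m\to\infty} k(\nu,R)\, C_m \;=\; 0.
\]

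Finally, writing $\RM^n = \bigcup_{R\in\NM} B_R$ gives
\[
\RM^n \setminus \Omega(\nu) \;=\; \bigcup_{R\in\NM} \bigl( B_R \setminus \Omega(\nu) \bigr),
\]
a countable union of sets of Lebesgue measure zero, hence itself of measure zero. This completes the argument. There is no real obstacle here; the only point that merits care is the passage from an uncountable union over $C>0$ to a countable one, which is handled by monotonicity so that continuity of measure can be legitimately invoked.
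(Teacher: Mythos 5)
Your proof is correct and follows essentially the same route as the paper: bound the complement in each ball $B_R$ by intersecting over $C$ and using the linear-in-$C$ volume estimate from the proposition, then exhaust $\RM^n$ by countably many balls. The extra care you take with monotonicity and continuity of measure is fine, though it is not strictly needed—since $B_R \setminus \Omega(\nu)$ is contained in $B_R \setminus \Omega(C,\nu)$ for every $C>0$, its measure is at most $kC$ for all $C$, hence zero.
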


\begin{proof} That complement $\Omega(\nu)$ in the ball $B_R$ of radius $R$
is the set $D(R,\nu):=\bigcap_{C>0} B_R \setminus \Omega(C,\nu)$. According to the 
proposition, the volume of $B_R \setminus \Omega(C,\nu)$ goes to zero
linearly with $C$, hence the measure of $D(R,\nu)$ is zero. By choosing 
a sequence of radii going to infinity, we see that the complement
of $\Omega(\nu)$ is the union of countably many sets of measure zero,
hence has itself measure zero.
\end{proof}

Similar considerations hold, of course, over the field of complex numbers. 
Note that for $\nu \leq 0$, the set $\Omega({\nu})$ is empty. 

\section{Kolmogorov's invariant torus theorem: Statement}
We now come to the formulation of the first important result in KAM theory, namely the theorem of Kolmogorov concerning the stability of invariant tori under certain conditions. The proof of the theorem will be given in chapter 11, after we have all technical machinery in place.
  
\begin{theorem} Consider the Poisson algebras $A=\CM\{ q,q^{-1},p\}$ and $B=\CM\{q,q^{-1},p,t\}$. 
Let $I=(p_1,p_2,\ldots,p_n) \subset A$ the ideal generated by the $p_i$'s. 
Consider an element $H \in A$ of the form
   $$H=\sum_{i=1}^n \omega_i p_i+\sum_{i=1}^n \a_{ij} p_ip_j\ +(p)^3 $$
If\\

(D) the vector $(\omega_i)$  satisfies the Kolmogorov Diophantine condition.\\

(N) the matrix $(\a_{ij})$  is invertible.\\

then the pair $(H,I)$ is {\em homotopically stable}\index{homotopic stability}.
 \end{theorem}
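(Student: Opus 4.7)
The plan is to follow Kolmogorov's original quadratic iteration scheme, constructing for any perturbation $H + tQ \in B$ a central Poisson automorphism $\phi : B \to B$ that restores an integrable normal form on a slightly shrunken neighbourhood of the invariant torus $\{p = 0\}$. Proposition~\ref{P::NF} already produces, at the purely formal level, a sequence of Poisson automorphisms doing the job; the content of the present theorem is the \emph{convergence} of this sequence in the scale of holomorphic Fourier series on the strips $B_t = \{|\mathrm{Im}\,\theta_i| < t\}$.

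First I would set up the Newton-type iteration. Starting from $H_0 := H + tQ$, at step $n$ we write $H_n = K_n(p,t) + R_n(q, q^{-1}, p, t)$, where $K_n$ is the $q$-independent ``normal form part'' and the remainder $R_n$ is estimated in a strip of width $t_n$. The linearised cohomological equation to be solved is
\[ \{K_n, S_n\} + \langle c_n, p \rangle + \text{const} \;=\; -R_n \pmod{\text{quadratic in } R_n} , \]
with $S_n$ a Hamiltonian generator and $c_n \in \CM^n$ a drift to be absorbed. This is exactly the situation analysed for the linear part by Proposition~\ref{P::Hadamard}: the Diophantine condition (D) allows one to solve the equation at $p = 0$ within the ring of holomorphic Fourier series, and a Taylor expansion in $p$ reduces the higher order problem to the same form. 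The price is a polynomial loss of analyticity, giving an estimate $\|S_n\|_{B_{t_n - \delta_n}} \le C \delta_n^{-\kappa} \|R_n\|_{B_{t_n}}$ with $\kappa = n - 1 + \nu$.

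Condition (N) enters through the handling of the drift $c_n$. The frequency vector of $K_n$ at the torus $\{p = p_n\}$ is $\omega + 2\a \cdot p_n + O(p_n^2)$, so any modification of the average $\langle H_n \rangle$ caused by the perturbation displaces the frequency and thereby destroys the Diophantine condition. We absorb this drift by translating $p \mapsto p + d_n$; the invertibility of $(\a_{ij})$ is exactly what is needed to solve $2\a\, d_n = -c_n + O(t)$ by the implicit function theorem, keeping the Diophantine frequency $\omega$ fixed throughout the iteration. Composing the formal flow $\varphi_n := e^{\{-, S_n\}}$ with this translation gives the $n$-th step, and a direct computation then shows that the new remainder is quadratic in the old: $\|R_{n+1}\|_{B_{t_{n+1}}} \le C \delta_n^{-2\kappa} \|R_n\|_{B_{t_n}}^2$.

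The main obstacle will be the convergence of the iteration in the presence of the loss of regularity at each step --- the celebrated \emph{small divisor problem}. The remedy, which is Kolmogorov's key contribution, is to exploit the quadratic rate: choosing $\delta_n = t_0 / 2^{n+2}$ so that $t_\infty = t_0 / 2 > 0$, and assuming that $\|R_0\|_{B_{t_0}}$ is small enough (which one arranges by rescaling $t$), one verifies by induction that $\|R_n\|_{B_{t_n}} \le \varepsilon_0^{2^n}$. This super-exponential decay beats any polynomial loss, so the infinite composition $\phi := \lim_{n \to \infty} \varphi_n \circ \cdots \circ \varphi_0$ converges in $\CM\{q, q^{-1}, p\}$ over the strip $B_{t_\infty}$ and defines the sought central Poisson automorphism. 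Running this construction simultaneously for $t \in [0, 1]$ furnishes the isotopy from the identity to $\phi$ and establishes the homotopic stability of the pair $(H, I)$.
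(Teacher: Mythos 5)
Your plan is the classical Kolmogorov quadratic iteration, and in substance this is the route the book takes: the proof is deferred to a later chapter, where it is obtained as an instance of the abstract Lie iteration for group actions, run on a scale of Banach spaces of holomorphic Fourier series indexed by the strip width. Your translation $p \mapsto p + d_n$, made possible by the invertibility of $(\a_{ij})$, is exactly the role of the transversal $F = t(\CM\{t\}\oplus I^2)$ and of the non-Hamiltonian generators $\d_{p_i}$ spanning $H^1(A)$; your small-divisor estimate with loss $\delta_n^{-\kappa}$ is the quantitative input that the abstract scheme axiomatises. So the difference is one of packaging (concrete iteration versus a general group-action theorem specialised to this pair), not of mathematical content.

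As a proof, however, the sketch has gaps worth naming. First, Proposition~\ref{P::Hadamard} is purely qualitative: it characterises when the Hadamard product preserves $\CM\{q,q^{-1}\}$, but the iteration needs the quantitative estimate $\|h\star f\|_{s-\delta}\le C\delta^{-\kappa}\|f\|_{s}$ on the scale of strips, which must be derived from the Diophantine condition by summing the lattice series; this is the technical heart and cannot be cited away. Second, and more seriously, you do not invoke the reduction modulo $I^2$. The equation $\{K_n,S_n\}=-R_n$ with the full $p$-dependent frequency is \emph{not} solvable in $A$ — the paper's example $H=p_1+\sqrt{2}p_2+\tfrac12 p_2^2$ shows the preimages acquire poles accumulating at $p=0$ — and a Taylor expansion in $p$ to all orders does not converge (this is precisely Poincar\'e's obstruction behind Proposition~\ref{P::NF} failing analytically). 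The point of homotopic stability is that one only needs $\p(H+tQ)=H$ modulo $H^0(B)+I^2$, so the cohomological equation need only be solved for $S_n$ affine in $p$, i.e. for the restriction of $R_n$ to $p=0$ and its first $p$-derivative, both with the \emph{constant} frequency $\omega$; everything of order $(p)^2$ is dumped into the normal form term $\sum a_{ij}p_ip_j$. Without this structural observation the linearised step of your iteration fails. Finally, the last sentence misreads the conclusion: no isotopy over $t\in[0,1]$ is required; $t$ is a central element of the germ ring $B$, and the convergent infinite composition already yields the single central Poisson automorphism $\p$ demanded by the definition.
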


Here by homotopic stability we mean the follwing: for any deformation  
\[H+tQ \in \CM\{q,q^{-1},p,t\}\]
of $H$, there exists a central Poisson automorphism $\p$ of $ \CM\{q,q^{-1},p,t\}$, series
$c(t) \in t\CM\{t\}$ and $a_{ij} \in tB$ 
such that:
\[ \p(H+tQ)=H +c(t)+\sum a_{ij} p_i p_j . \]
 
Condition (N) is called {\em Kolmogorov's non-degeneracy condition.}\index{Kolmogorov's non-degeneracy condition} 

As each hamiltonian of the form at the right hand side has $p_1=p_2=\ldots=p_n=0$ as invariant torus, we see that under the assumptions of the theorem, any hamiltonian $H+tQ$ admits a family of invariant Lagrangian manifolds parametrised by $t$, for $t$ small enough. As we shall see, the theorem also holds in  the real analytic context, because all our constructions commute with the conjugation
$q_i \mapsto 1/\overline{q_i}$, the real part of the invariant manifolds are 
tori. 

\section{The one dimensional case}
Although the case $n=1$ of the Kolmogorov theorem is rather trivial, it already
shows the origin of the non-degeneracy condition and its relation to symplectic
but non-hamiltonian vector fields.

Let us take a closer look at the Hamiltonian
\[H(q,p)=\omega p +\a p^2 .\]
The conditions of the theorem become:\\

(D) $\omega \neq 0$,\\

(N) $\a \neq 0$.\\

The theorem produces for any perturbation $H+tQ$ a Poisson morphism $\varphi$ and an element $c(t) \in \CM\{ t \}$ such that
  $$\p(H+tQ)=c(t)+\omega p+(p)^2 .$$

As the Hamiltonian flow preserves the level sets of $H$, we know that, in a neighbourhood of the zero section, the motion will take place along the curves
$$H=\text{constant} .$$
These curves are diffeomorphic to circles, that is one-dimensional tori.
Kolmogorov's theorem tells us that for each $t$, we may select a circle on which the period of the motion is {\em precisely $\omega$}, as is the case for $H_0$. If we omit the non-degeneracy condition, then this is obviously wrong: take for instance
$$H=(\omega+t) p .$$
For fixed value of $t$, the motion along the circles $p=\text{constant}$ has frequency $\omega+t$, so it cannot be equal to $\omega$ unless $t=0$.

Let us consider the specific deformation
$$H'=\omega p + \a p^2+t p  $$
of $H$ and let us compute the function $c$ and the Poisson automorphism $\p$ in this case.

First if we consider a Poisson automorphism induced by a Hamiltonian vector field
$$\p=e^{t\{-,S\}} $$
then the effect is
$$\p(H')=\omega p + \a p^2+t p+t\{\omega p+\a p^2,S\}+(t^2) .$$
As $p$ is not in the image of
$$S \mapsto \{\omega p+\a p^2,S\}$$
we must use a {\em non-hamiltonian vector field} to get rid of the deformation:
$$v=d(t)\d_{p} .$$
We have
$$e^{tv}H'=\omega p + \a p^2+t p+d(t)(\omega+2\a p)+(t^2). $$
Therefore we take
$$d(t)=-\frac{t}{2\a}. $$
The associated Poisson automorphism $\p$ maps $H$ to
$$\p(H')=\omega p +\a p^2-\frac{\omega}{\a} t+\frac{1}{4 \a} t^2. $$
In general, we will have an infinite series, but here the process stops at the first step.

The circle 
 $$\Ct_t:=\{ p+\frac{1}{2\a} t=0 \}$$ is mapped to 
 $$\Ct_0:=\{ p=0\}$$  
 The frequency of $H'$ along $\Ct_t$ and $H$ along $\Ct_0$ are equal:
 $$\d_p H'_{|\Ct_t}=\omega+p+\frac{1}{2\a} t=\omega=\d_p (H)_{|\Ct_0}. $$

This trivial example shows how the non-degeneracy condition works: it enables us to choose among the invariant circles the one for which the frequency is precisely $\omega$. If $\a$ is equal to $0$, such a choice is no longer possible.

 \section{Kolmogorov's theorem and group actions} 

We will now give a slight reformulation Kolmogorov's invariant torus theorem as a 
statement about a certain group action\index{group action} in the infinite dimensional vector space 
 $$E:=\CM\{ q,q^{-1},p,t \}$$ of analytic Fourier series depending on a parameter $t$. 
Clearly, the group of central Poisson automorphisms acts on $E$, but we we will consider the subgroup
$G$ of elements $\varphi$ which are {\em tangent to the identity}, i.e., whose restriction to $t=0$ 
is the {\em identity} on $\CM\{q,q^{-1},p\}$.
When we fix an element $H \in E$, the group $G$ acts on the affine space 
$$H+M \subset E ,$$
of {\em perturbations} of $H$, where $M:=tE \subset E$ consist of all the elements $tQ$ we can
add to $H$.

We will take $H$ to be of the special form 
$$H=\sum_{i=1}^n \omega_i p_i+\sum_{i=1}^n \a_{ij} p_ip_j ,$$
where the frequency $\omega=(\omega_i)$ and $(\a_{ij})$ are fixed.
If we add to $H$ a special perturbation from the linear space
$$F=t(\CM\{ t \}\oplus I^2) \subset M, $$ 
we obtain a Hamiltonian of the form
\[ H+tc(t)+ \sum_{ij} a_{ij}p_ip_j\]
The elements of this affine space $H+ F \subset H+M$ 
have clearly the special property that their flow preserves the subset 
$p_1=p_2=\ldots=p_n=0$ that defines the torus.\\

Kolmogorov's theorem can be formulated as follows: 

\begin{theorem}   If $(\omega_i)$ is Diophantine and $(\a_{ij})$ is invertible, 
then any element of $ H + M $ lies in the $G$-orbit of an element in $F$. In other words, the map
\[G \times F \to H+M,\ (\p,\a) \mapsto \p(H+\a) \]
is surjective. 
\end{theorem}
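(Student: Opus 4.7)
The plan is to deduce this reformulation directly from Kolmogorov's invariant torus theorem stated earlier in the chapter, once one verifies that the central Poisson automorphism it produces belongs to the subgroup $G$ of automorphisms tangent to the identity.

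First, I would apply the previously stated Kolmogorov theorem to any given $H+tQ\in H+M$: this yields a central Poisson automorphism $\psi$ of $E$, a series $c(t)\in t\CM\{t\}$, and elements $a_{ij}\in tE$ such that
$$\psi(H+tQ)=H+c(t)+\sum_{ij}a_{ij}\,p_ip_j.$$
Writing $c(t)=t\,\tilde c(t)$ and $a_{ij}=t\,\tilde a_{ij}$, the right-hand side can be rewritten as $H+t\bigl(\tilde c(t)+\sum_{ij}\tilde a_{ij}\,p_ip_j\bigr)$, which exhibits $\gamma:=\psi(H+tQ)-H$ as an element of $F=t(\CM\{t\}\oplus I^2)$.

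Second, I would verify that $\psi$ lies in $G$. In the formal setting this is transparent from the construction in Proposition~\ref{P::NF}: the automorphism is built as a composition (or limit) of Poisson flows $\psi_k=e^{t^{k}\{-,S_k\}}$ with $k\geq 1$, each of which reduces to the identity on $\CM\{q,q^{-1},p\}$ when $t=0$. The analytic KAM scheme used to prove the Kolmogorov theorem in Chapter~11 should preserve this feature. Granting this, $\psi|_{t=0}=\Id$, hence also $\psi^{-1}|_{t=0}=\Id$, so $\p:=\psi^{-1}$ belongs to $G$.

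Finally, setting $\alpha:=\gamma\in F$ one obtains $\p(H+\alpha)=H+tQ$, which gives the surjectivity claim. The main obstacle is not in this repackaging: it is hidden in the first step, where the construction of $\psi$ demands the full analytic KAM machinery. The Diophantine condition on $\omega$ is what makes the relevant Hadamard multiplication preserve holomorphic Fourier series, and the non-degeneracy of $(\a_{ij})$ is used at each iterative step to absorb frequency drifts by non-Hamiltonian symplectic corrections, exactly as already illustrated in the one-dimensional example treated above. Once that analytic theorem is granted, the group-theoretic formulation becomes essentially a bookkeeping exercise.
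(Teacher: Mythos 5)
Your proposal is correct and follows exactly the route the paper intends: the theorem is presented there as a direct reformulation of the invariant torus theorem (whose proof is deferred to a later chapter), and your derivation --- rewriting $c(t)+\sum a_{ij}p_ip_j$ as an element of $F$ and taking $\p=\psi^{-1}$ --- is precisely that repackaging. Your explicit check that $\psi$ is tangent to the identity (so that $\psi^{-1}\in G$) is a point the paper's statement of the invariant torus theorem does not literally guarantee, and you rightly note it must be read off from the construction; this is the only place where your argument leans on more than the bare statement.
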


 \vskip0.3cm  \begin{figure}[htb!]
  \includegraphics[width=10cm]{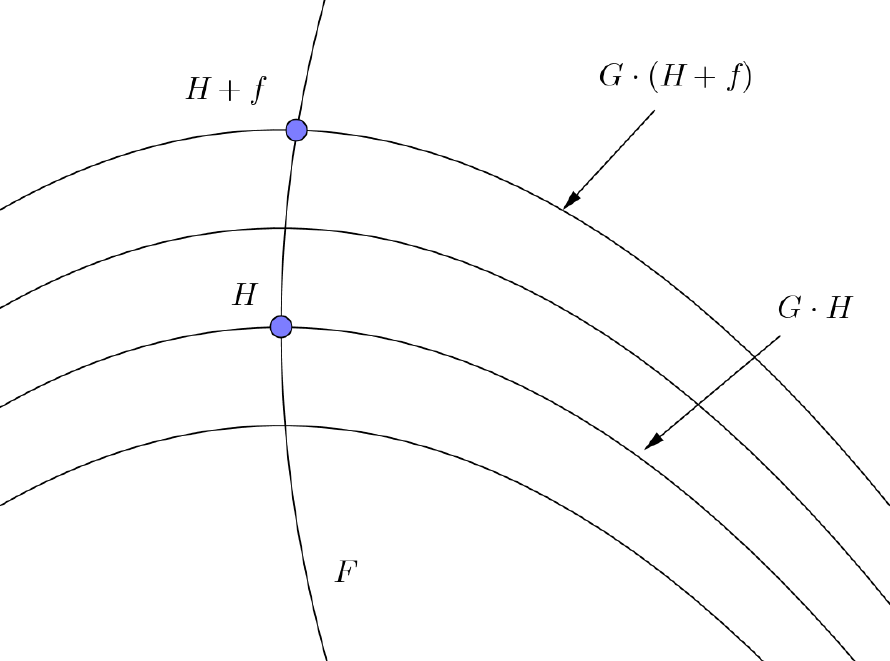}
\end{figure} \vskip0.3cm  
It means that any perturbation of $H$ can be transformed into a normal form \index{normal form} 
belonging to $F$. As all elements of the normal form posses an invariant torus, any perturbation 
of $H$ admits an invariant torus, obtained from the torus $p_1=p_2=\ldots=0$ in the normal form via 
the automorphism $\varphi \in G$. \\

The Kolmogorov theorem gives an answer to a very general question in a particular case: given a group action on a space $M$, how can we ensure that a subset $F \subset M$ is a local transversal to the action?\\

When such a property holds we say that the corresponding element in $F$ is a {\em normal form}\index{normal form} and $H+F$ is called a {\em transversal to the action}. Such a choice for $F$ is usually not unique and correspondingy there 
are different normal forms. The approporiate choice of $F$ is determined by 
utilitarian considerations.\\

In the next chapters of this book we will develop this point of view and define a general iteration scheme to bring elements to normal form. Furthermore, we will formulate a general statement of convergence that implies the above theorem as a special case.

\section{Bibliographical notes}
 
Kolmogorov's original paper on the invariant torus theorem:\\

{\sc A.N. Kolmogorov}, {\em On the conservation of quasi-periodic motions for a small perturbation of the Hamiltonian function}, Dokl. Akad. Nauk SSSR {\bf 98}, 527 - 530, (1954).\\

Note that the paper was not even translated in the english version of the journal.   Arnold gave a first proof in:\\

{\sc V.I. Arnold}, {\em Proof of a theorem of A. N. Kolmogorov on the preservation of conditionally periodic motions under a small perturbation of the Hamiltonian,} Uspehi Mat. Nauk {\bf 18}(5), 13 - 40, (1963).\\

Arnold studied the stability of the three body problem in:\\

{\sc V.I. Arnold},  {\em\ Small denominators and problems of stability of motion in classical and celestial mechanics}, Uspehi Mat. Nauk {\bf 18}(6), 91-192, (1963).\\


Moser proposed the general scheme of group actions in:\\

{\sc J. Moser}, {\em A rapidly convergent iteration method and non-linear partial differential equations II}, {Ann. Scuola Norm Sup. Pisa - Classe di Scienze S\'er. 3},{\bf 20}(3), {499-535}, (1966).\\
 
For details on classical  KAM theory, we refer to the following papers and references therein:\\

{\sc V.I. Arnold, V.V. Kozlov, A.I.  Neishtadt}
{\em Mathematical aspects of classical and celestial mechanics}, Encyclopedia of Mathematical Sciences {\bf 3}, 1-291, Springer Verlag, (1985)\\

{\sc J.-B. Bost}, {\em Tores invariants des syst\`emes dynamiques hamiltoniens}, S\'eminaire Bourbaki {\bf 27} Expos\'e No. 639, (1984-1985).\\

{\sc J. F\'ejoz},\ {\em Introduction to KAM theory,} 19 pp. Lectures given at the Jornadas on interactions between dynamical systems and partial differential equations, Barcelona (2013) and the Ciclo di Lezioni at the Universit\'e di Bicocca, Milano (2011).\\

{\sc M. Sevryuk}, {\em The classical KAM theory at the dawn of the twenty-first century.,} Moscow Mathematical Journal, 3(3), 1113-1144, (2003)\\

Arnold pointed out that the set of vectors which satisfy Kolmogorov's Diophantine condition and which lie on a submanifold might have zero measure. There are many results which prevent such pathologies, see for instance:\\

{\sc A.S. Pyartli}, {\em Diophantine approximations on submanifolds of Euclidean space}, Functional Analysis and Its Applications {\bf 3}(4), 303-306, (1969).\\

{\sc D.Y. Kleinbock et G.A. Margulis}, {\em Flows on homogeneous spaces and Diophantine approximation on manifolds}, Annals of Mathematics {\bf 148} 339 - 360, (1998).\\

See also:\\
{\sc M. Garay},  {\em Arithmetic density}, Proceedings of the Edinburgh Mathematical Society, Vol. 59,  691-700, 2016.\\

\chapter{The generalised KAM problem}
The original theorem of Kolmogorov pertains to the very special problem of perturbations of  
quasi-period motion on tori in a Hamiltonian system given in action-angle variables. It is of 
considerable interest to look for a more intrinsic and coordinate independent understanding of 
the theorem. This is indeed possible and a first step consists of replacing tori to more general 
Lagrangian subvarieties or coisotropic subvarieties that are invariant under the flow of a 
Hamiltonian $H$. This can be done in a general context of Poisson-algebras.

\section{Invariant ideals}
In this section we start with the standard symplectic space $M=K^{2n}$, with canonical coordinates 
$q,p$ and we assume that the field $K$ is algebraically closed of characteristic $0$, e.g. $K=\CM$.  
The ring of polynomial functions $A:=K[q,p]$ is a Poisson-algebra with the
standard Poisson-bracket
\[ \{ F,G \}=\sum_{i=1}^n\d_{p_i} F \d_{q_i}G-\d_{q_i} F \d_{p_i}G.  \]

To a collection of polynomials 
$$f_1,\dots,f_k \in K[q,p]  $$
one associates the {\em affine variety:} \index{affine variety}
$$V:=\{(p,q) \in M:f_1(q,p)=\dots=f_k(q,p)=0 \} \subset M. $$
This variety in fact only depends on the ideal $I=(f_1,\dots,f_k) \subset A$ generated by the 
polynomials $f_i$, hence the notation $V=V(I)$. More generally, 
given a ring $A$ and an ideal $I$ there is an associated subscheme 
$V(I) \subset \Spec(A)$.

Recall that the {\em radical}\index{radical} $\sqrt{I}$ of an ideal $I$ consists of 
those elements of the ring for which a power belongs to the ideal $I$. Hilbert's {\em Nullstellensatz} is the 
statement that $\sqrt{I}$ is equal to the ideal of all polynomials that vanish on $V(I)$. 
A {\em radical ideal} \index{radical ideal} is an ideal such that $\sqrt{I}=I$. So radical ideals $I$ can alternatively be characterised by the the property that any polynomial which vanishes on $V(I)$ belongs to $I$. 

If $G$ is a first integral of $H$,  then the level sets of $G$ are preserved by the Hamiltonian flow of $H$. 
One idea of Kolmogorov was to search for {\em lower dimension invariant manifolds}.\index{invariant manifold} 

\begin{proposition}
Let $I \subset A$ be a radical ideal. The following assertions are equivalent~:
\begin{enumerate}[{\rm i)}]
\item $\{ H,I \} \subset I$
\item $V(I)$ is invariant under the formal flow of $H$.
\end{enumerate}
If this applies, we say that $I$ is $H$-invariant.\index{invariant ideal}
\end{proposition}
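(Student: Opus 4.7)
The plan is to unpack what ``invariance under the formal flow'' means algebraically, then pass between ideals and their zero sets via Hilbert's Nullstellensatz, using the radicality hypothesis to close the loop. The whole argument will rest on iterating the derivation $X_H := \{H,-\}$ and on the fact that its formal exponential encodes the flow on functions.

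First I would make precise the formal flow. Since $H \in A$ is a polynomial, $X_H$ is a $K$-linear derivation of $A$, and its formal exponential
\[ \varphi_t := \exp(tX_H) : A \to A[[t]], \qquad \varphi_t(f) = \sum_{k \ge 0} \frac{t^k}{k!}\, X_H^k(f) \]
is well-defined. The statement that $V(I)$ is invariant under the formal flow then translates to: for every $f \in I$ and every $x \in V(I)$, the formal series $\varphi_t(f)(x) \in K[[t]]$ vanishes identically; equivalently, $\varphi_t(I) \subset I[[t]]$.

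For the direction (i) $\Rightarrow$ (ii), I would argue by induction that $X_H^k(I) \subset I$ for all $k \ge 0$: the base case $k=1$ is the hypothesis $\{H,I\}\subset I$, and the inductive step is immediate since $X_H$ sends $I$ to $I$. Hence every coefficient of $\varphi_t(f)$ lies in $I$ when $f \in I$, so $\varphi_t(I) \subset I[[t]]$, which is precisely the required invariance.

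For the converse (ii) $\Rightarrow$ (i), the invariance gives, for each $f \in I$ and $x \in V(I)$, that $X_H^k(f)(x) = 0$ for every $k$; taking $k=1$ shows that $\{H,f\}$ vanishes on $V(I)$. This is the step where the main (mild) obstacle lies: from ``vanishes on $V(I)$'' to ``lies in $I$'' one needs both that $K$ be algebraically closed and that $I$ be radical. I would invoke the Nullstellensatz to conclude that the ideal of polynomials vanishing on $V(I)$ is $\sqrt{I}$, and then use $\sqrt{I} = I$ to deduce $\{H,f\} \in I$. Varying $f \in I$ gives $\{H,I\} \subset I$, closing the equivalence. The real conceptual point is thus that radicality is exactly what is needed to make the geometric statement (ii) faithfully track the algebraic one (i).
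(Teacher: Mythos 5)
Your argument is correct, and the overall skeleton (expand $e^{tX_H}$, induct on powers of $X_H$ for one direction, extract the coefficient of $t$ for the other) is the same as the paper's; the difference lies in how the radicality hypothesis is deployed. You define invariance of $V(I)$ pointwise --- $\varphi_t(f)(x)=0$ for all $x\in V(I)$ --- and then pass from ``$\{H,f\}$ vanishes on $V(I)$'' to ``$\{H,f\}\in I$'' via the Nullstellensatz together with $\sqrt{I}=I$. The paper instead never evaluates at points: it defines invariance algebraically as $\p(\sqrt{IB})\subset\sqrt{IB}$, where $IB$ is the ideal generated by $I$ in $B=A[[t]]$, and the whole work goes into the small lemma that $IB$ is again radical when $I$ is (shown by writing $f=f_0+f_1t+\dots$ with $f^2\in IB$ and peeling off coefficients: $f_0^2\in I\Rightarrow f_0\in I$, then $f_1^2\in I$, etc.). Your route is shorter and more geometric but genuinely uses that $K$ is algebraically closed and that $V(I)$ has enough $K$-points; the paper's route is scheme-theoretic and would survive without points. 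One small expository caution: in your setup you declare the pointwise condition to be ``equivalently, $\varphi_t(I)\subset I[[t]]$'' before radicality has entered --- that equivalence is exactly half of what is being proved, so it should not be folded into the definition; you do in fact re-derive it honestly in the (ii) $\Rightarrow$ (i) paragraph, so this is a phrasing issue rather than a gap.
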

\begin{proof} 
We denote by $\p$ the formal flow of the Hamiltonian $H$, which was defined
as the automorphism of $B:=A[[t]]$ defined by the series  $e^{t\{ -,H\}}$:
\[\p(f)=f+t\{f,H\}+\frac{t^2}{2!}\{\{f,H\},H\}+\ldots\]
The ideal $I \subset A$ generates an ideal $IB \subset B$ consisting of series
\[ h_0+h_1t+h_2t^2+\ldots\]
where the $h_i \in I$.
The invariance of $V(I)$ under $\p$ means, by definition: 
\[ \p(\sqrt{IB}) \subset \sqrt{IB}\]
But if $I$ is radical, $IB$ is radical too, $\sqrt{IB}=IB$. To show this, it is
sufficient to show that if $f^2 \in IB$ then in fact $f \in IB$. When we write
$f$ as a series
\[ f=f_0+f_1t+f_2t^2+\ldots,\;\;f_i \in A\]
then $f^2 \in IB$ leads to equations
\[f_0^2 \in I,\;\;2 f_0f_1 \in I,\;\;2f_0f_2+f_1^2 \in I, \ldots\]
If $I$ is radical, the first equation gives $f_0 \in I$, the second gives no
information on $f_1$, but from the third we deduce $f_1^2 \in I$, so $f_1 \in I$. Continuing this way, we get $f_i \in I$ for all $i=0,1,2,\ldots$, that is $f \in IB$, hence $IB$ is radical.
 
It follows that  $V(I)$ is $\p$-invariant if and only if 
\[ f \in I \Longrightarrow \p(f) \in IB\]
By looking at the coefficient of $t$ at the right hand side we see that
$\p$-invariance of $V(I)$ implies that $\{I,H\} \subset I$. On the
other hand, if $\{I,H\} \subset I$, then for $f \in I$, all terms 
\[f,\;\;\{f,H\},\;\;\{\{f,H\},H\},\;\;\ldots\]  
of the series for $\p(f)$ belong to $I$, so $\p(f) \in IB$. Hence
the conditions $i)$ and $ii)$ are equivalent.
\end{proof}

The $K$-algebra 
\[ K[q,p]/I\]
can be interpreted as the ring of polynomial functions on the variety $V(I)$ defined
by the ideal $I$. It is usually called the {\em affine coordinate ring}\index{affine coordinate ring} 
of $V(I)$.

The invariance condition $\{ H,I \} \subset I$  implies that the map
$$ K[q,p]/I \to K[q,p]/I,\ f \mapsto \{ H,f \}  $$
is well-defined. It is a {\em derivation} of the coordinate ring $K[q,p]/I$ and 
represents the restriction of the Hamiltonian field $X_H$ to $V(I)$. We use the same
name for this induced map and consider $\{H,-\} \in Der(K[q,p]/I).$\\

Recall that an ideal $I \subset K[q,p]$ is called {\em involutive}\index{involutive}\index{co-isotropic} (or {\em co-isotropic}) if
$$\{ I, I\} \subset I.  $$
If a radical ideal $I$ is involutive and $V(I)$ is non-empty, then $\dim V(I) \ge n$, 
and if $ V(I)$ is of pure dimension $n$, then $V(I)$ is a Lagrangian subvariety, possibly with singularities.

\begin{proposition} For any involutive ideal $I \subset K[q,p]$ there is a well-defined map
$$I/I^2 \to \Der(K[q,p ]/I),\ H \mapsto \{H,- \}. $$
\end{proposition}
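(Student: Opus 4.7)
Set $A:=K[q,p]$ and write $\bar{f}$ for the class of $f \in A$ in $A/I$. The claim has two parts, which I will handle in turn: first, that for each $H \in I$ the operator $\{H,-\}$ descends to a well-defined $K$-linear derivation of $A/I$; second, that this assignment vanishes on elements of $I^2$, and hence factors through $I/I^2$. Linearity in $H$ is automatic since the Poisson bracket is $K$-bilinear.

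For the first part, the Poisson bracket is a bi-derivation of $A$, so $\{H,-\}: A \to A$ is a $K$-linear derivation. To see it descends to $A/I$, I need to check that $\{H,I\} \subset I$; but this is exactly the involutivity hypothesis $\{I,I\} \subset I$ applied with $H \in I$. Therefore the composition $A \xrightarrow{\{H,-\}} A \twoheadrightarrow A/I$ kills $I$ and factors through a $K$-linear map $A/I \to A/I$, which inherits the derivation property from $\{H,-\}$ on $A$.

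For the second part, I must show that for $H \in I^2$ the induced derivation on $A/I$ is zero, i.e. that $\{H,f\} \in I$ for every $f \in A$ (not only $f \in I$). By $K$-linearity in $H$ it suffices to treat the case $H = ab$ with $a,b \in I$, and then the derivation property of the bracket in its first argument gives
\[ \{ab,f\} = a\{b,f\} + b\{a,f\} \in I,\]
since $a,b \in I$. Combining the two parts, $H \mapsto \{H,-\}$ is a well-defined $K$-linear map $I/I^2 \to \Der(A/I)$. There is no genuine obstacle here beyond bookkeeping; the only conceptual point worth highlighting is that the involutivity assumption is used to make $\{H,-\}$ land in $A/I$, while the Leibniz rule for the bracket is what forces the map to descend from $I$ to $I/I^2$.
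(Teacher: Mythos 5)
Your proof is correct and follows essentially the same route as the paper: involutivity guarantees $\{H,-\}$ preserves $I$ and hence descends to a derivation of $A/I$, and the Leibniz rule in the first argument shows the bracket with an element of $I^2$ lands in $I$, so the map factors through $I/I^2$. If anything, your reduction to $H=ab$ with $a,b\in I$ is slightly cleaner than the paper's computation with generators $f_i$ and coefficients $a_{ij}$ (where the extra term $f_if_j\{a_{ij},g\}\in I$ is silently dropped).
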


\begin{proof}
As $\{I,I\} \subset I$, the ideal $I$ is $H$-invariant for any $H \in I$, so
for each $H \in I$ we have a well-defined derivation
\[\{H,-\}:K[q,p]/I \to K[q,p]/I\] 
We obtain a map
\[ I \to Der(K[q,p]/I),\;H \mapsto \{H,-\}\]

Let $(f_1,\dots,f_n)$ be generators of $I$ and $H \in I$. Put
  $$H'=H+\sum_{ij}a_{ij}f_if_j $$
We have
$$
\begin{array}{rcl}
\{ H+\sum_{ij}a_{ij}f_if_j ,g \} &=&\{ H ,g \}+  \sum_{ij}a_{ij}f_i \{f_j ,g \} +\sum_{ij}a_{ij}f_j\{f_i ,g \}\\  
&=&\{H,g\}\;\; \mod I
\end{array}
$$
So the polynomials $H$ and $H'$ define the {\em same derivation of the coordinate ring
$K[q,p]/I$}. 
This shows that the above map factors over $I/I^2$ and we get an induced map
$$I/I^2 \to \Der(K[q,p]/I),\ H \mapsto \{H,- \}.  $$
\end{proof} 

In algebraic geometry, the module $I/I^2$ is called the {\em conormal module}\index{conormal module}
 because when $V(I)$ is smooth, it is dual to the space of sections of the normal bundle. Geometrically, the above proposition states that the conormal bundle to an involutive manifold  maps naturally to its tangent space. As we shall see, this elementary algebraic statement turns out to be quite fundamental in KAM theory, as it shows that for the study of the flow of $H$ along $V(I)$ the important 
object is not the Hamiltonian $H$ itself, but rather its {\em class modulo $I^2$}.

There is a second point: the Hamiltonian flow depends on the {\em differential} of the Hamiltonian and not on the Hamiltonian itself. Adding a constant does not change the dynamics, and
therefore the important object is {\em the class of the Hamiltonian modulo the subspace $I^2 \oplus K$}.\\

Also, it is clear that the above algebraic set-up can be applied in a more general context: if $I$ is an involutive ideal in an arbitrary Poisson-algebra $A$, we obtain in precisely the same way a well defined map
\[ I/I^2 \to Der(A/I),\;\;H \mapsto \{H,-\}\] 
In the general situation the role of the constants is taken over by the {\em Casimir elements}\index{Casimir element}, i.e. the elements Poisson-commuting with everything:
\[ H^0(A):=\{ f \in A\;|\;\{f,g\}=0\;\;\textup{\ for all\ } g \in A\}.\]

\begin{definition} We say that two Hamiltonians $H,H' \in A$ are
$I^2$-equivalent, if 
\[ H'=H\;\; \mod H^0(A) +I^2\]
that is, if we can write
\[ H'=H+c+i\]
with $c \in H^0(A)$ and $i \in I^2$. 
\end{definition}

If $H$ and $H'$ are $I^2$-equivalent, then the derivations 
$\{H,-\}$ and $\{H',-\}$ of $A/I$ coincide, so $H$ and $H'$ define exactly 
the same dynamics on the variety defined by $I$.\\

Let us now consider some elementary examples.\\ 
 
\begin{example} Let $n=1$ and consider the ideal $I$ generated by $p \in K[p,q]$.
The Hamiltonians $H= p$  and $H'=p+qp^2+1$ are $I^2$-equivalent. 
Both Hamiltonian fields are equal along the line
$$V(I)=\{ (q,p) \in K^2: p=0 \} $$
although the second one is much more complicated over the whole space:

\begin{align*}
X_H&=\d_q \\
X_{H'}&=(1+2pq)\d_q-p^2\d_p
\end{align*}
 \vskip0.3cm  \begin{figure}[h!]
  \includegraphics[width=12cm]{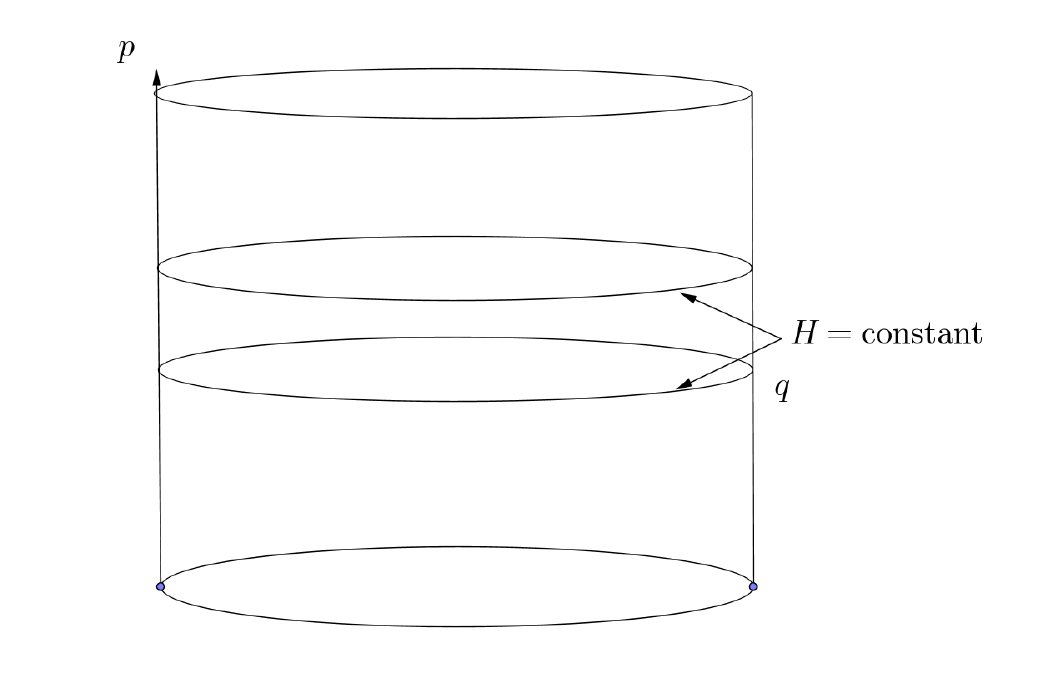}
\end{figure} \vskip0.3cm  
\end{example}

\begin{example} 
Consider again the ring $K[q,q^{-1},p]$ with the Poisson structure
$$ \{ p_j,q_k \}= q_k \dt_{jk}$$

The ideal $I$ generated by the $p_i$ is involutive and the factor ring 
\[K[q,q^{-1},p]/I \approx K[q,q^{-1}]\] 
is the coordinate ring of our algebraic torus $V(I)$.
Consider a linear Hamiltonian
$$H_0=\sum_{i=1}^n\omega_ i p_i .$$
Any Hamiltonian $H$ of the form $$H= H_0\;\; \mod (K \oplus I^2 ) $$
defines the same quasi-periodic motion on the torus $V(I)$ as $H_0$.

Via the isomorphism 
$$K[q,q^{-1},p]/I \approx K[q,q^{-1}]$$
the derivation
$$\{H_0,-\}: K[q,q^{-1},p]/I \to K[q,q^{-1},p]/I$$
is identified with the map 
$$\{H_0,-\}:K[q,q^{-1}] \to K[q,q^{-1}], q^I \mapsto (\omega,I) q^I$$

We see that the kernel reduces to $K$ if and only if the frequencies $\omega_i$'s are
$\ZM$-independent. 

From a formal viewpoint, the situation is therefore similar to that of the dynamics along a non
resonant Hamiltonian. However in our previous approach, the qualitative behaviour of the operator
varies at a resonant torus. But  when we {\em restrict the dynamics to a specific torus} we
can avoid this problem.
\end{example}

\section{Poisson cohomology and Poisson vector fields\index{Poisson cohomology}}
We already encountered the space of {\em Casimir elements\index{Casimir elements}}:
\[ H^0(A):=\{ f \in A\;|\;\{f,g\}=0\;\;\textup{\ for all\ } g \in A\}.\]
As the notation already indicates, there exist also higher cohomology groups $H^i(A)$ that generalise
the de Rham groups of a symplectic manifold. 
In particular, we will see there exists an exact sequence analogous to the exact sequence
$$0 \to \Ham(M) \to \Symp(M) \to H^1(M) \to 0 $$
of a symplectic manifold to the case of a general Poisson algebra.

Let us denote by $\Theta_A$ the space of derivations of $A$ and consider the exterior algebra 
$${\bigwedge}^\bullet \Theta_A.$$
We extend the map
$$A \to \Theta_A,\ f \mapsto \{-,f\} $$
to the exterior algebra so that it becomes a differential graded algebra. The resulting map gives a complex
$$C^\bullet(A):0 \to A \to \Theta_A \to {\bigwedge}^2 \Theta_A \to \cdots $$
called the {\em Poisson complex\index{Poisson complex}.}  If we consider the bi-derivation $\{-,-\}$ as an
element $\pi \in \L^2 \Theta_A$, then the differential of the complex is obtained by taking the Lie bracket with $\pi$.
We denote by $H^{\bullet}(A)$ the cohomology of the Poisson complex. In the symplectic case, the Poisson complex is isomorphic to the de Rham complex. From this definition we see that indeed $H^0(A)$  is the space of Casimir elements. 

A {\em Poisson-derivation}\index{Poisson derivation} is a derivation $\theta$ that 'preserves' the Poisson-bracket:
\[ \theta \{f,g\}=\{\theta(f),g\}+\{f,\theta(g)\}\]
and similarly, the space $H^1(A)$ can be interpreted as the space of such Poisson derivations, modulo those that are Hamiltonian. It fits inside the exact sequence:
$$0 \to \Ham(A) \to \textup{Poiss}(A) \to H^1(A) \to 0 $$
which provides a generalisation of the de Rham type sequence for the symplectic case.




\begin{example}
  Consider the two-dimensional $K$-algebra
\[A:=K[q,q^{-1}][[p]] \] 
of formal power series in $p$ whose coefficients are Laurent polynomials in $q$,
with the Poisson bracket
$$\{ p,q \}= q.$$
A direct computation shows that:
\begin{align*}
H^0(A)=K,\\
H^1(A)=K\d_p
\end{align*}
 \end{example}

\begin{example}
Consider the $2n$-dimensional $K$-algebra
\[A:=K[q,q^{-1}][[p]] \] 
of formal power series in $p=(p_1,\dots,p_n)$ whose coefficients are Laurent polynomials in $q=(q_1,\dots,q_n)$.
with the Poisson bracket
$$\{ p_j,q_j \}= q_j$$
and all other variables Poisson-commuting. The Poisson complex is isomorphic to the $n$-fold wedge product of the one considered above. According to K\"unneth formula:
$$ H^\bullet (A)={\bigwedge}^\bullet\oplus_{i=1}^n K[\d_{p_i}].$$
So analogous to de Rham theory, we recover the cohomology of the $n$-torus. 
\end{example}
 
\section{Pairs and homotopic stability}
We now describe a general framework in which one can formulate the
problem posed by Kolmogorov. Of course, one can not hope for very general
answers, and the answers one can give will very much depend on the particulars. 
Nevertheless, we hope that this general setup provides a useful conceptual framework.

\begin{definition}\label{D::Pair}
\begin{enumerate}[{\rm A)}]
 
 \item {\em pair} $(H,I)$ in a Poisson-algebra $A$ consists of an
element $H \in A$ and an $H$-invariant involutive ideal $I \subset A$:
\[ \{H,I\} \subset I,\;\;\;\{I,I\} \subset I \;.\]
\item The normal space to a pair $(H,I)$, denoted $N(H,I)$, is defined by
$$N(H,I)=A/\left( \{H,A\}+I^2+H^0(A) \right) .$$
\end{enumerate}
Note that the subspace that is divided out is just a linear subspace and not
an ideal. 
\end{definition}

\begin{example} The ideal $I=(p_1,p_2,\dots,p_n)$ is involutive in the Poisson-algebra $A:=K[q,q^{-1}][[p]]$, which
models the formal completion of the zero section of the cotangent bundle to the algebraic torus. For any 
$H \in K[[p]]$ we obtain a pair $(H,I)$ in $A$. We have seen that  for a non-resonant Hamiltonian
 $$H=\sum_{i=1}^n \omega_i p_i$$
the space $K[[p]]$ complements  the image of the operator $\{H,-\}$. Therefore the normal space to $(H,I)$ is the
$n$-dimensional vector space
$$N(H,I)= K [p_1] \oplus K [p_2] \oplus \dots \oplus K [p_n] $$
where we denote by $[-]$ the class of $-$ in the factor space.
\end{example}

\begin{example}
 Consider now the case of the symplectic two-dimensional algebra $A:=K[[q,p]]$ and consider the Hamiltonian
 $$H=pq $$
 and the invariant ideal $I=(H)$. The Hamiltonian derivation
 $$\{ H,- \}:A \to A $$
 maps $p^iq^j$ to $(i-j)p^iq^j$ therefore  $K[[pq]]$ complements  the image of the operator. Thus
 the normal space to $(H,I)$ is the $1$-dimensional vector space
$$N(H,I)= K [pq] $$
\end{example}

We want to describe the behaviour of pairs under deformation.
So we consider Poisson-algebras $B$ with a non-zero divisor $t$ as central element.
Hence one has an exact sequence
\[ 0 \to B \stackrel{t \cdot}{\to} B \to A \to 0\]
where $A$ is the factor ring $B/tB$. From any pair $(H',I')$ in $B$ one obtains a pair $(H,I)$ in $A$ by reduction modulo $t$, i.e. by setting $t=0$.
We will only consider pairs $(H',I')$ with the {\em flatness property}:
\[ t f \in I' \implies f \in I'\]
One then has a corresponding exact sequence
\[ 0 \to B/I' \stackrel{t \cdot}{\to}  B/I' \to A/I \to 0\] 

One wants to understand properties of the map

\[{\rm Pairs\ in\ B} \to {\rm Pairs\ in\ A}, (H',I') \mapsto (H,I) \]

The persistence property of invariant tori in KAM theory leads to the following general notion.
\begin{definition} {\em \bf  (Persistence property)} 

We say that a pair $(H,I)$ in $A$ is {\em persistent}, if 
for {\em any}  deformation $H'$ of $H$ there exists at least one pair $(H',I')$ in $B$.\\

\end{definition}

Of course, it is of great interest to find conditions that imply the persistence of a given pair.\\

In the classical situation the parameter $t$ in $A$ provides a trivial modification of the algebra $A$, like $B=A[[t]]$.  In such 
cases we have the additional property that the canonical projection $B \to A$ has a section $A \hookrightarrow B$ and we can 
consider $A$ as a {\em subring} of $B$. In this situation one can formulate the following stronger property.\\ 

\begin{definition} {\em \bf (Homotopic stability property)}

We say that a pair $(H,I)$ is {\em homotopically stable},
if for any deformation $H'$ of $H$, there exists a central Poisson morphism 
\[\p: B \to B,\;\;\p(t)=t, \p_{| A}=Id_{A}\] 
such that $\p(H')$ is $I^2$-equivalent to $H$, i.e. there exist $c \in H^0(B)$, $i \in I^2$ such that
\[\p(H')=H+c+i\] 
\end{definition}

\begin{proposition}

{If the pair $(H,I)$ is homotopically stable, then it is persistent}

\end{proposition}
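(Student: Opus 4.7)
The plan is to exhibit $IB \subset B$, the ideal generated in $B$ by $I$, as an invariant ideal making $(H', IB)$ a pair in $B$ which reduces modulo $t$ to $(H, I)$.

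First I would apply the homotopic stability hypothesis to the given deformation $H'$ to obtain a central Poisson morphism $\varphi : B \to B$ with $\varphi(t) = t$, $\varphi|_A = \Id_A$, and $\varphi(H') = H + c + i$ for some $c \in H^0(B)$ and some $i$ lying in the square of the ideal $IB$. Since $\varphi$ fixes every element of $A$, and $IB$ is generated in $B$ by elements of $I \subset A$, we have $\varphi(IB) = IB$; hence any invariance of $IB$ under $\varphi(H')$ transfers at once to invariance under $H'$ via the identity $\{H', IB\} = \varphi^{-1}\{\varphi(H'), IB\}$, and it is enough to work with $\varphi(H')$.

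Next I would verify two invariance statements. The involutivity $\{IB, IB\} \subset IB$ follows by a Leibniz expansion from $\{I,I\} \subset I$, since every summand in $\{fb, gb'\}$ with $f,g \in I$ and $b,b' \in B$ either contains an explicit factor from $I$ or involves $\{f,g\} \in I$. For the Hamiltonian invariance $\{\varphi(H'), IB\} \subset IB$, I would split $\varphi(H') = H + c + i$ and treat the three summands separately: $\{H, IB\} \subset IB$ by Leibniz and the $H$-invariance of $I$; $\{c, IB\} = 0$ because $c$ is a Casimir; and $\{i, IB\} \subset IB$ by Leibniz together with the involutivity of $IB$ proved just above.

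Combining these, $(H', IB)$ is a pair in $B$, and its reduction modulo $t$ is $(H, I)$ since $IB$ surjects onto $I$ in $A = B/tB$. The only piece requiring real care is the last invariance step, $\{i, IB\} \subset IB$ for $i \in (IB)^2$, which depends on interpreting the $I^2$-equivalence in $B$ as placing $i$ in the square of the extended ideal $IB$; the remainder is a routine unwinding of the Leibniz rule.
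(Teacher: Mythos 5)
There is a genuine gap at the very first step: you claim that $\varphi$ fixes every element of $A$ pointwise and deduce $\varphi(IB)=IB$, so that the unperturbed ideal $IB$ itself is $H'$-invariant. But the condition $\varphi_{|A}=\Id_A$ in the definition of homotopic stability is meant in the sense of the group $G$ of the preceding chapter: the automorphisms in play are flows $e^{t\{-,S\}}$ with $S\in B$, which are the identity only \emph{modulo} $t$; they move an element $f\in A$ to $f+t\{f,S\}+\dots$. Consequently $\varphi(IB)\neq IB$ in general, and $IB$ is \emph{not} invariant under $H'$ --- this is exactly the point of the theorem: the invariant torus of the perturbed Hamiltonian is a deformed torus, not the original one $p_1=\dots=p_n=0$ (compare the one-dimensional example, where the invariant circle is $p+\tfrac{t}{2\a}=0$ rather than $p=0$). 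Even under your literal reading one would only get $\varphi(IB)=I\,\varphi(B)\subset IB$, with equality requiring surjectivity of $\varphi$, which is not part of the hypothesis.

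The repair is the one the paper makes: set $I':=\varphi^{-1}(IB)$. Your two verifications then do the real work, applied on the other side of $\varphi$: involutivity of $IB$ and the inclusion $\{\varphi(H'),IB\}\subset IB$ (via the decomposition $\varphi(H')=H+c+i$, the $H$-invariance of $I$, the Casimir property of $c$, and $i\in I^2B$) transport back through the Poisson morphism, since $f\in I'$ gives $\varphi(\{H',f\})=\{\varphi(H'),\varphi(f)\}\in IB$, hence $\{H',f\}\in I'$, and similarly for $\{I',I'\}\subset I'$. Finally $I'$ still reduces to $I$ modulo $t$ precisely because $\varphi$ is the identity modulo $t$. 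So your Leibniz computations are correct and needed, but they must be attached to $\varphi^{-1}(IB)$, not to $IB$.
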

\begin{proof}
Take the ideal $IB$. It has the same generators as $I \subset A$, but now we look at what they generate in $B$. 
Then take $I':=\p^{-1}(IB)$. Clearly $\{I',I'\} \subset I'$ and $\{H',I'\} \subset I'$, so $(H',I')$ is a pair in 
$B$ that lifts $(H,I)$.
\end{proof}

\section{The formal Kolmogorov theorem}

The normal space $N(H,I)$ of a pair can also be understood in terms of 
{\em first order deformations}, where we only look at the first order
in $t$ and ignare all higher order terms. Algebraically, this means that
we work in the ring $B=A[t]/(t^2)$.
Note that given element $Q \in A$ which maps to zero in $N(H,I)$, then 
there exists $h \in A$ and $i \in I^2$ such that:
$$\{H,h\}=Q+i. $$
Then 
$$\p=\Id-t\{-,h\} $$
is an Poisson automorphism of the Poisson algebra
$$B=A[t]/(t^2)$$ 
that maps the first order deformation
$$H'=H+tQ $$
to an element $I^2$-equivalent to $H$. 

This remark can be lifted to obtain the following {\em generalised formal Kolmogorov theorem.}

\begin{theorem}
 Let $(H,I)$ be a pair in $A$ and let $B=A[[t]]$. If the map
 $$H^1(A) \to N(H,I),\ v \mapsto [v(H)] $$
 is surjective, then $(H,I)$ is homotopically stable.
\end{theorem}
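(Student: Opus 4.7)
The plan is a Newton-style induction in the $t$-adic filtration of $B=A[[t]]$, a direct adaptation of the scheme proving Proposition~\ref{P::NF} in which arbitrary Poisson derivations replace Hamiltonian ones. Any Poisson derivation $D$ of $A$ extends $K[[t]]$-linearly to a Poisson derivation $\wt D$ of $B$ that annihilates $t$; exactly as in the proof that formal flows are Poisson automorphisms, $\exp(t^k\wt D)$ is then a central Poisson automorphism of $B$ congruent to the identity modulo $t^k$. The goal is to produce a sequence $D_1, D_2, \ldots$ of Poisson derivations of $A$ so that the partial compositions
$$\varphi_n := \exp(-t^n\wt D_n)\circ\cdots\circ\exp(-t\wt D_1)$$
converge $t$-adically to an automorphism $\varphi$ with $\varphi(H+tQ)\in H+H^0(B)+I^2 B$.

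The induction is driven by the invariant
$$\varphi_n(H+tQ)=H+C_n+J_n+t^{n+1}R_{n+1},\qquad C_n\in tH^0(B),\ J_n\in tI^2 B,\ R_{n+1}\in B.$$
Reducing modulo $t$ gives $R_{n+1}\equiv Q_{n+1}\pmod{tB}$ with $Q_{n+1}\in A$, and the surjectivity hypothesis furnishes a Poisson derivation $v$ of $A$, an $h\in A$, $a\in I^2$ and $c\in H^0(A)$ with
$$Q_{n+1}=v(H)+\{H,h\}+a+c .$$
Setting $D_{n+1}:=v+\{-,h\}$, which is a Poisson derivation of $A$ satisfying $D_{n+1}(H)=Q_{n+1}-a-c$, and applying $\psi_{n+1}:=\exp(-t^{n+1}\wt D_{n+1})$ to the previous decomposition, a short expansion yields
$$\varphi_{n+1}(H+tQ)=H+(C_n+t^{n+1}c)+(J_n+t^{n+1}a)+t^{n+2}R_{n+2}.$$
The key computational point is that $C_n$ and $J_n$ are divisible by $t$, so the corrective terms $t^{n+1}\wt D_{n+1}(C_n)$ and $t^{n+1}\wt D_{n+1}(J_n)$ lie in $t^{n+2}B$ and can be absorbed into $R_{n+2}$; in particular one does not need $\wt D_{n+1}$ to preserve the ideal $I^2 B$. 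That $\wt D_{n+1}$ automatically preserves $H^0(B)$ is the standard fact that Poisson derivations send Casimirs to Casimirs.

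The main conceptual input, and the only non-bookkeeping step, is the surjectivity assumption: it is precisely what guarantees solvability of the cohomological equation at every order. The remaining issue is the passage to the limit. Since $\psi_k\equiv\Id\pmod{t^k}$, the composition $\varphi=\lim_n\varphi_n$ is a well-defined central Poisson automorphism of $B$ tangent to the identity, and because $H^0(B)$ and $I^2 B$ are $t$-adically closed in $B$, the partial sums $C_n$ and $J_n$ converge to elements $C\in H^0(B)$ and $J\in I^2 B$ with $\varphi(H+tQ)=H+C+J$, establishing the desired homotopic stability. The only subtlety to watch is the divisibility $C_n, J_n\in tB$, which is what prevents the Casimir and $I^2 B$ parts from being corrupted when transported by the subsequent automorphisms.
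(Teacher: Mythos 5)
Your proposal is correct and follows essentially the same route as the paper: an induction on the order in $t$, using the surjectivity of $H^1(A)\to N(H,I)$ to write each successive error term as $v(H)+\{H,h\}+a+c$ and killing it with the exponential $\exp(-t^{k}\wt D)$ of the Poisson derivation $D=v+\{-,h\}$, then passing to the $t$-adic limit. Your write-up is in fact more careful than the paper's (whose displayed formulas at this point are garbled), notably in observing that the corrections $t^{k}\wt D(C_n)$ and $t^{k}\wt D(J_n)$ are absorbed at higher order because $C_n,J_n\in tB$.
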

\begin{proof}
We will construct a Poisson automorphism $\p$ of $B=A[[t]]$ that brings
a perturbation $H+tQ$, $Q \in B$ to the normal-form
\[\p(H+tQ)=H+c+i \]
where $c \in H^0(B)=H^0(A)[[t]]$, $i \in I^2B$.
Surjectivity of the above map means that all elements $Q$ of $A$ can be
represented in the form
\[ Q=v(H)+c+i\]
where $v \in Poiss(A)$, $c \in H^0(A)$ and $i \in I^2 \subset A$.
We work modulo $(t^k)$ and use induction on $k$ and construct a sequence of
Poisson-automorphisms $\p_0=Id,\p_1,\p_2,\ldots$ with 
\[ \p_{k+1}=\p_{k} + (t^k).\]
Assume that we found such a sequence of Poisson morphisms such that
\[\p_k(H+tQ)=H +c_k+i_k+ (t^k),\;\;c_k \in H^0(B), i_k \in I^2 \]
If we look one order in $t$ further we have 
\[\p_k(H+tQ)=H +c_k+i_k+ a_kt^k+(t^{k+1})\]
By assumption, we may find 
$v_{k+1} \in {\Pt}oiss(A)$ and 8.
$$ v_{k+1}(H)= $$
Define 
$$a_{k+1}=a_k+t^k\a_{k} \in I^2 +H^0(A) $$ we get that
$$e^{-t^kv_k}\p_k(H_0)=H_0+a_{k+1}\ \mod (t^{k+1}). $$
This proves the theorem.
\end{proof}

\begin{example}
Let us consider the case $n=1$, $A=K[[q,p]]$ with
$$H=pq $$
and let $I=(H)$ be the ideal generated by $H$. The coordinate lines are invariant under the Hamiltonian flow. We know that
$$N(H,I)=K [pq] $$
We have
$$H^0(A)=K,\ H^1(A)=0.  $$
The map
$$H^1(A) \to N(H,I),\ v \mapsto [v(H)] $$
maps a zero-dimensional space to a one-dimensional space, so it cannot be 
surjective. There is indeed a non-trivial deformation of the pair $(H,I)$ 
namely $$H'=(1+t)H,\ I'=pq. $$
The difference with the torus case is that here there is a single special Lagrangian variety consisting of two lines. Along this variety there is a well 
defined frequency equal to $(1+t)$.
\end{example}

\begin{example}
We make the following variation of the previous example:
we let
\[ A=K[[\l,q,p]],\ H=(1+\l)pq, I=pq \]
where $\lambda$ is a central element and let $I=(H)$ be the 
ideal generated by $H$.
We have
$$N(H,I)=K[[\l]] [pq] $$
but now
$$H^0(A)=K[[\l]],\ H^1(A)=K[[\l]].[\d_\l]$$
The map
$$H^1(A) \to N(H,I),\ v \mapsto [v(H)] $$
is now an isomorphism. The pair $(H,I)$ is now homotopically stable.
\end{example}

Like in the example, the map involved in the theorem is in general a morphism of finite type modules over the ring of Casimir operators. Let us now work out the torus case in details. 

\begin{proposition} Let $I$ be the ideal generated by $p_1,\dots,p_n$ in $A:=K[q,q^{-1}][[p]]$.
Let $H=\sum_{i=1}^n \omega_i p_i+\sum_{i,j} \a_{ij} p_ip_j+\dots \in K[[p]]$ be
a power series in the $p$ variables such that
\begin{enumerate}
\item[{\rm (R)}] the vector $\omega=(\omega_1,\dots,\omega_n)$ is non resonant,
\item[{\rm (N)}] the $n \times n$ matrix $(\a_{ij})$ is invertible
\end{enumerate}
then these conditions are respectively equivalent to:
\begin{enumerate}
\item[{\rm (R)}] $N(H,I)$ is an $n$-dimensional vector space generated by the classes of $p_1,\dots,p_n$,
\item[{\rm (N)}] the natural map $H^1(A) \to N(H,I)$ is surjective.
\end{enumerate}
\end{proposition}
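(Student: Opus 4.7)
The plan is to reduce both equivalences to explicit computations using the $K[[p]]$-module decomposition $A = \bigoplus_{J \in \ZM^n} K[[p]]\, q^J$, which is stable under $\{H,-\}$ because $H$ depends only on $p$: one has $\{H,q^J\} = ((J,\omega) + O(p))\, q^J$, so $\{H,-\}$ is $K[[p]]$-linear and diagonal in the monomial basis $\{q^J\}_{J \in \ZM^n}$.

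For the equivalence (R), the forward direction follows at once from Proposition~\ref{P::formal_PBH}: non-resonance makes each scalar $(J,\omega) + O(p)$ a unit of $K[[p]]$ for $J \neq 0$, so $A/\{H,A\} \cong K[[p]]$ (only the $J=0$ summand survives). Combined with $H^0(A) = K$ and with the observation that $I^2$ projects to $(p)^2$ inside this $K[[p]]$-summand, one obtains
\[ N(H,I) \;\cong\; K[[p]]/(K + (p)^2) \;\cong\; (p)/(p)^2 \;\cong\; K^n, \]
with basis $[p_1],\ldots,[p_n]$. For the converse, suppose $J_0 \neq 0$ satisfies $(J_0,\omega)=0$; then $\{H,-\}$ sends $K[[p]]\, q^{J_0}$ into $(p)\,K[[p]]\, q^{J_0}$, so $q^{J_0}$ is not in the image of $\{H,-\}$. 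Since elements of $I^2$ have no $p$-constant part and elements of $H^0(A)=K$ live only in the $J=0$ slot, the class $[q^{J_0}] \in N(H,I)$ is nonzero and linearly independent from $[p_1],\ldots,[p_n]$, contradicting the conclusion.

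For the equivalence (N), I would invoke the earlier example which identifies $H^1(A) = \bigoplus_{i=1}^n K\,[\partial_{p_i}]$. Assuming (R) so that $N(H,I) = \bigoplus_i K\,[p_i]$, the direct expansion
\[ \partial_{p_i}(H) \;=\; \omega_i + \sum_{j=1}^n (\alpha_{ij}+\alpha_{ji})\,p_j + O((p)^2) \]
yields $[\partial_{p_i}(H)] = \sum_j (\alpha_{ij}+\alpha_{ji})\,[p_j]$ in $N(H,I)$. Thus the map $H^1(A) \to N(H,I)$ is represented in these bases by the matrix $(\alpha_{ij}+\alpha_{ji})$; since the quadratic form $\sum \alpha_{ij} p_i p_j$ depends only on the symmetrisation of $(\alpha_{ij})$ and $\mathrm{char}\,K = 0$, surjectivity of this map is equivalent to invertibility of the symmetric matrix $(\alpha_{ij})$.

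The main obstacle I anticipate is the converse direction of (R): one must carefully verify that no combination $\{H,f\} + i + c$ with $f \in A$, $i \in I^2$, $c \in K$ can equal $q^{J_0}$. The bookkeeping reduces to tracking the $q^{J_0}$-homogeneous, $p$-constant component of each summand: the $\{H,f\}$-contribution lies in $(p)\,K[[p]]$, the $i$-contribution has no $p$-constant part, and the $c$-contribution sees only the $J=0$ slot. Once this homogeneity argument is spelled out cleanly, the remaining content of the proposition is essentially linear algebra.
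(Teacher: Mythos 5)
Your proof is correct and follows essentially the same route as the paper: diagonalise $\{H,-\}$ in the monomial basis $q^J$ (Proposition~\ref{P::formal_PBH}), identify $N(H,I)$ with $(p)/(p)^2$ spanned by $[p_1],\dots,[p_n]$, and read off the map $H^1(A)=\bigoplus_i K[\d_{p_i}]\to N(H,I)$ as the symmetrised matrix $(\a_{ij}+\a_{ji})$. You are in fact slightly more complete than the paper, which only argues the forward direction of (R); your converse via the nonvanishing, independent class $[q^{J_0}]$ of a resonant monomial is a correct filling of that gap.
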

\begin{proof}
The first assertion is due to the fact that the absence of resonances implies that $\{H,-\}$ is diagonal in the monomial basis and that its image consists of functions with zero mean value. For the second assertion note that
\begin{align*}
H^0(A)=K[1] \\
H^1(A)=\bigoplus_{i=1}^n K[\d_{p_i}]
\end{align*}

and that the map 
$$K^{n} \to N(H,I),\ (c_1,\dots,c_n) \mapsto [\sum_{i=1}^n c_i \d_{p_i}H_0] $$
is an isomorphism if and only if the matrix $(a_{ij})$ is invertible. This proves the theorem.
\end{proof}

In particular the pair $(H,I)$  of the proposition is homotopically stable 
and we see that the formal version of Kolmogorov's theorem is a special 
case of our general theorem.

We will see that in the analytic case, that is, Kolmogorov's invariant torus theorem, persistence is also shown be showing homotopic stability.

\section{Bibliographical notes}

The Poisson complex was introduced in:\\
{\sc A. Lichnerowicz.} {\em Les vari\'et\'es de Poisson et leurs alg\`ebres de Lie associ\'ees. J. Differential Geom. 12 (1977), no. 2, 253--300.}\\

\chapter{The Lie iteration} \index{Lie iteration}
In the previous chapter, we stated Kolmogorov's invariant torus theorem and its generalisations in terms of infinite 
dimensional group actions. In this sense, it is a particular case of a general theorem on group actions. It is fortunate that all basic geometrical ideas already appear in the finite  
dimensional situation, namely for Lie group actions and in fact even for linear group 
actions. So the natural action of a matrix Lie group $G \subset GL(V)$ on a finite dimensional 
vector space $V$ is a natural starting point for our investigation.

 \section{Local transversals to a group action}
When a Lie group $G$ acts smoothly on a manifold $M$, we are given a map
\[ \sigma: G \times M \to M,\;\;(g,x) \mapsto g \cdot x\, .\]
If $a \in M$ is a point, we obtain the so-called {\em orbit map}\index{orbit map} 
\[ \sigma_a: G \to M,\;\;g \mapsto g \cdot a,\]
whose image is the $G$-orbit through $a$.
The derivative this map at the identity element $e \in G$ gives a linear map
\[ \rho:=d \sigma_a:\alg \to T_a M,\;\;\;\xi \mapsto \xi(a):=d\sigma_a(\xi) \]
where $\alg=T_eG$ denotes the Lie-algebra of $G$. We call this map the {\em infinitesimal action}\index{infinitesimal action} (at $a$). The image
\[ \alg \cdot a:=d\sigma_a(\alg)\]
of this map is the {\em tangent space at $a$ to the orbit through $a$}
and might be called the {\em $\alg$-orbit}.\index{$\alg$-orbit}\\
\newpage
\begin{figure}[htb!]
\includegraphics[width=11cm]{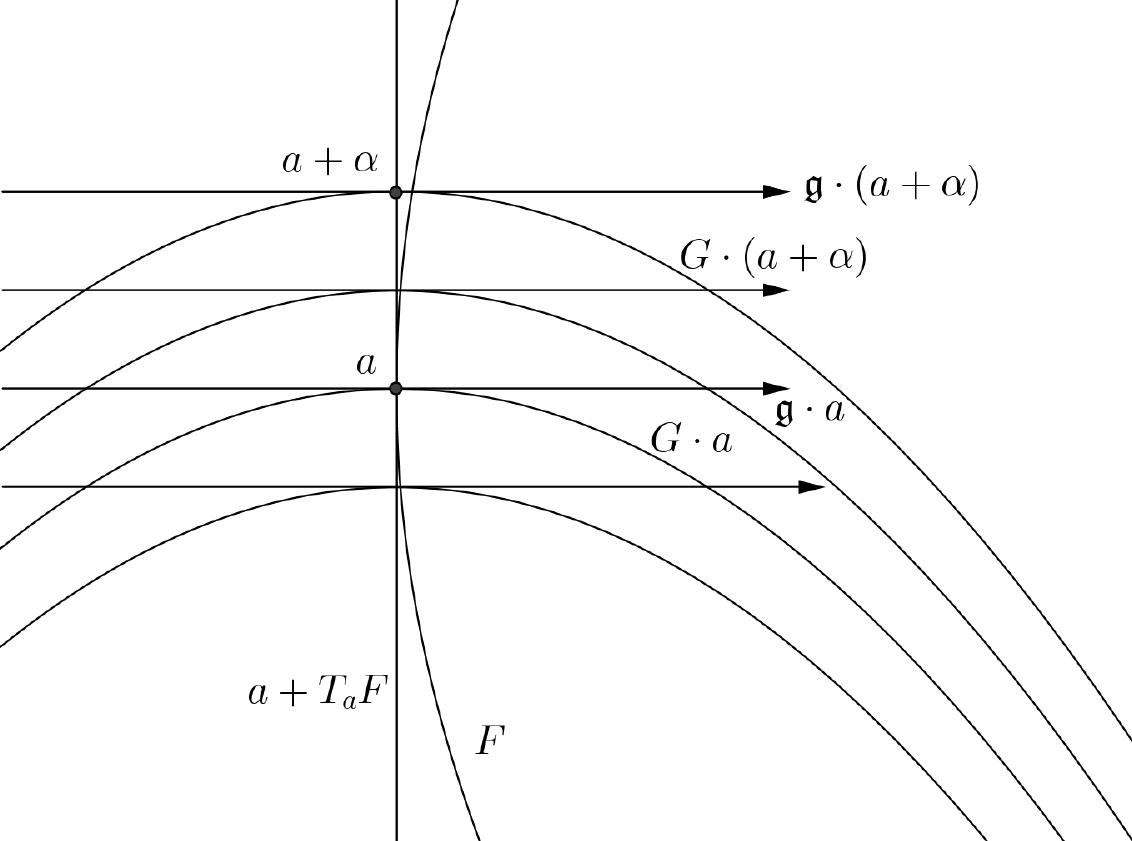}
\end{figure} 

One may ask the following general question:\\

{\em If $F \subset M$ is a submanifold such that $T_aF$ is a transversal to the $\alg$-orbit of $a$, 
is it true that $F$ is a transversal to the $G$-action? \index{transversal}

In other words: does 
$$T_aF+\alg\cdot a=T_aM$$ implies that the map
\[G \times F \to M,\;\;(g,b) \mapsto g \cdot b \]
is locally surjective around $a$?}\\

It follows from the implicit function theorem that the answer is positive. In the next chapter 
we will describe a general iteration schema that provides a constructive solution. 
KAM theory is concerned with the analogous situation in infinite dimensions where no implicit function theorem is available, but the same iteration scheme still can be used.
For example, as we will see, Kolmogorov's invariant torus theorem can be proven using this iteration, if interpreted in an appropriate sense. For that we will introduce the notion of {\em Kolmogorov spaces}\index{Kolmogorov space}, 
which will be developed in the next chapter. The resulting general theory then has many applications.\\
\newpage
We start with some basic examples of Lie group actions.

\begin{example}Consider the action of the Lie group $G=\RM$ on $\RM^2$ by translation along the vector
$u=(1,0)$:
$$\RM \times \RM^n \to \RM^n,\ (t,x,y) \mapsto (x+t,y). $$ 
The orbits are horizontal lines. Any curve which does not have an horizontal tangent at a given point is a transversal to the group action at that point.\\ 

 \vskip0.3cm  \begin{figure}[ht]
\includegraphics[width=11cm]{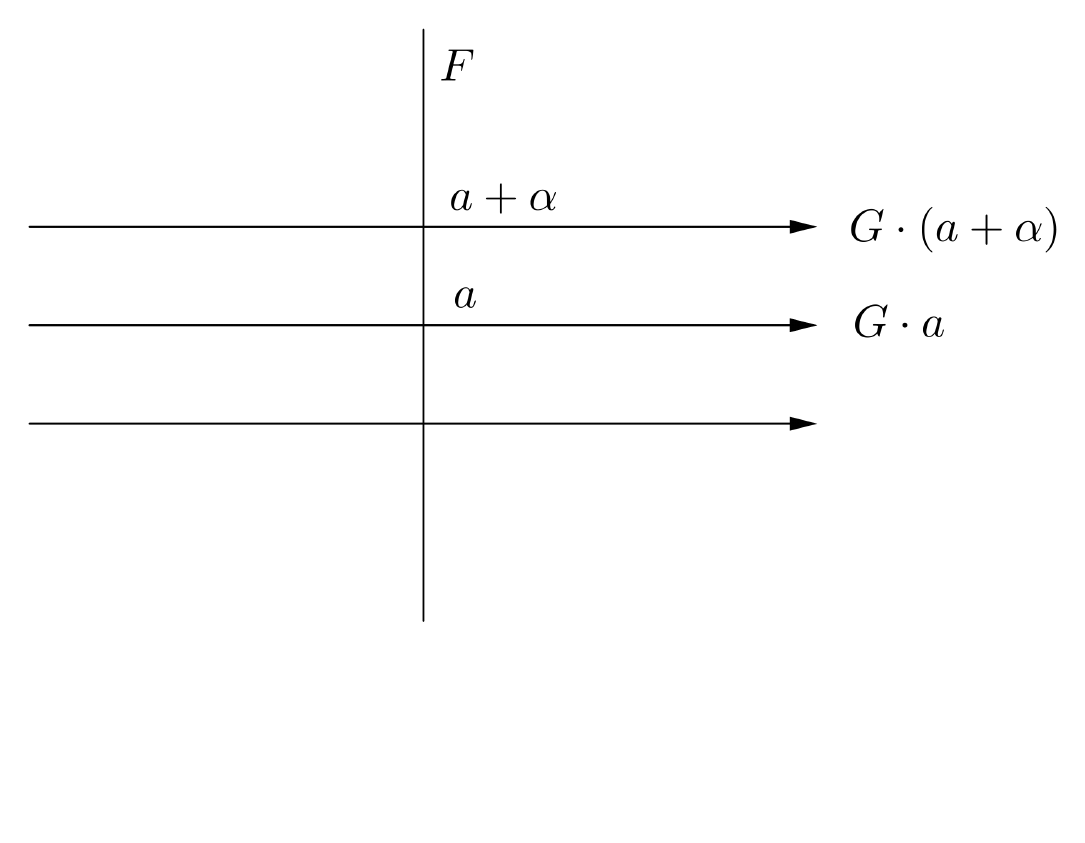}
\end{figure} \vskip0.3cm  
\end{example}

\begin{example}Consider the action of the Lie group $G=\RM$ acting on $\RM^2$ by translation along a parabola:
$$\RM \times \RM^2 \to \RM^2,\ (t,x,y) \mapsto (x+t,y-2xt-t^2). $$ 
The orbit through $(x_0,y_0)$ is the parabola with equation
\[y+x^2=y_0+x_0^2\] 
Any curve which is not tangent at a point to the corresponding parabola is a transversal at the point.

The infinitesimal action at $a=(x_0,y_0)$ is the map
$$\RM \to \RM^2,\ t \mapsto (1,-2x_0) t. $$
 
The $\alg$-orbit is the tangent to the parabola at $(x_0,y_0)$, so any curve which is not tangent at 
a point to the corresponding parabola is a transversal at the point.
In particular, if $x_0=0$ the $\alg$-orbit is an horizontal line.
The situation is summarised by the first picture of this chapter.
\end{example}

\begin{example}Consider the natural action of the Lie group $G=Gl(n,\RM)$ on $\RM^n$:
$$GL(n,\RM) \times \RM^n \to \RM^n,\ (A,x) \mapsto Ax. $$ 
The action is homogeneous\index{homogeneous action} at any point $x \neq 0$ and the 
orbit is $\RM^n \setminus \{ 0 \}$. Therefore there are two orbits: the origin and its complement.
\end{example}
\begin{example}Consider the natural action of the Lie group $G=SO(n,\RM)$ on $\RM^n$:
$$SO(n,\RM) \times \RM^n \to \RM^n,\ (A,x) \mapsto Ax. $$  
The orbit of $a \neq 0$ is the $n-1$ dimensional sphere $S^{n-1}$ of radius $\| a \|$ centred at the origin. Therefore the action is no longer homogeneous.
A transversal to the orbit at $a$ is given by any manifold transversal to the sphere at $a$, for instance the straight line $\RM a$.

The Lie algebra $\alg$ consists of antisymmetric matrices and its orbit at $a$ is the tangent space to our sphere at $a$.\\

 \vskip0.3cm  \begin{figure}[htb!]
\includegraphics[width=12cm]{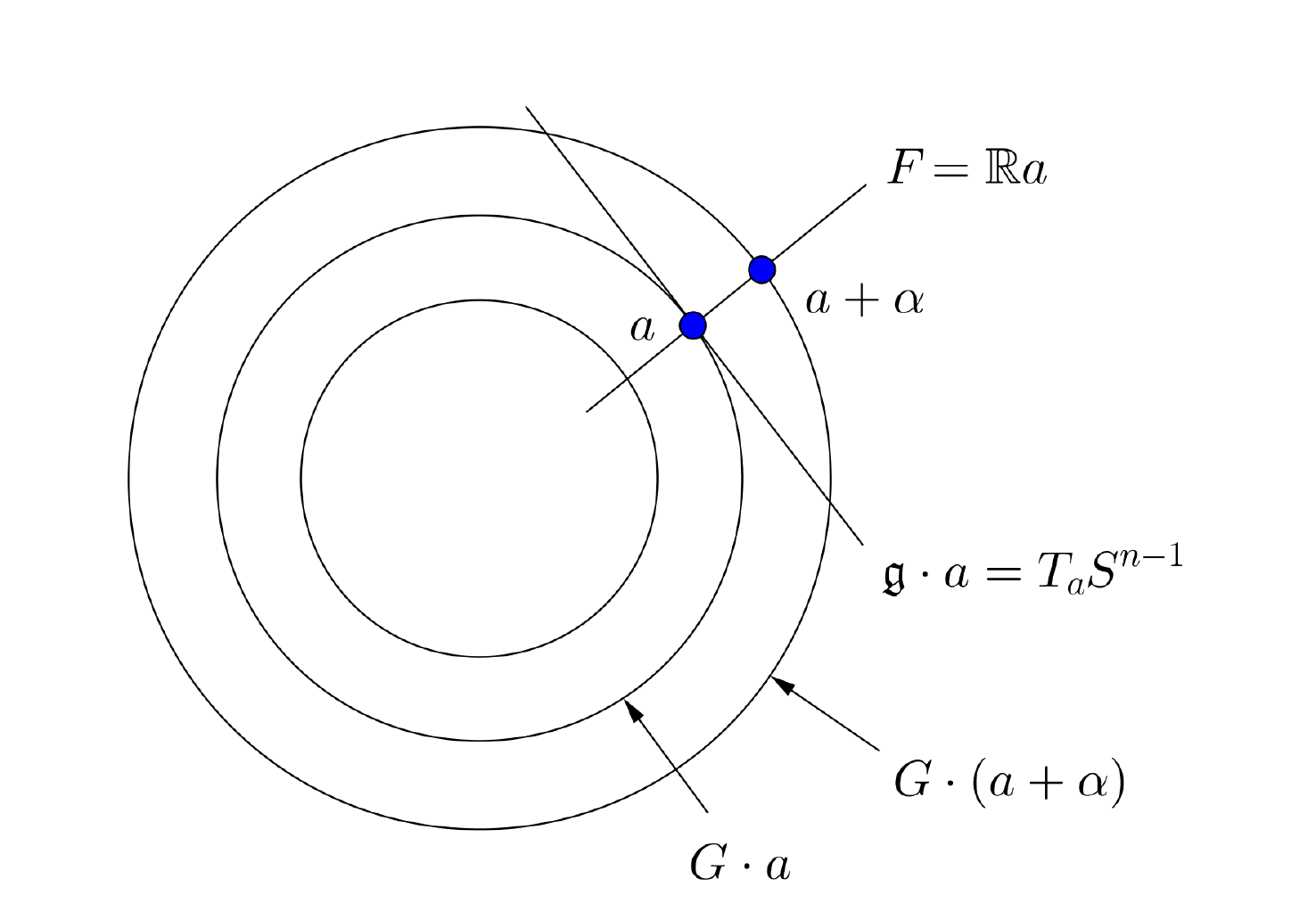}
\end{figure} \vskip0.3cm  
\newpage

\end{example}

\section{Adjoint orbits for the linear group}
If $G$ is a Lie group, it acts on itself via conjugation
\[  G \times G \to G, (g,h) \mapsto g h g^{-1}\]
As this action preserves the unit element $e \in G$, there is an induced linear action
of $G$ on the tangent space $\alg=T_eG$ 
\[ G \times \alg \to \alg\;.\]
This is commonly called the {\em adjoint action}\index{adjoint action} of a group 
on its Lie-algebra.\\

Let us take a closer look at it for the  group $G=GL(n,\RM)$. Its Lie-algebra $\alg$ 
can be identified with the space $M(n,\RM)$ of $n \times n$ matrices and the
adjoint action is given by conjugation
\[ GL(n,\RM) \times M(n,\RM) \to M(n,\RM), (P,A) \to P A P^{-1}\] 
To determine the  corresponding infinitesimal action,  we write down a one-parameter
family of matrices near the identity, with a given $B \in M(n,\RM)$ as tangent
vector:
$$P=\Id+t B+o(t) $$
The inverse then is
 $$P^{-1}=\Id-t B+o(t) $$
 and therefore
 $$ PAP^{-1}=A+t[B,A]+o(t) .$$
 Therefore the infinitesimal action at $A \in M(n,\RM)$ is the map
 $$ M(n,\RM) \to M(n,\RM), B \mapsto [B,A].$$

In order to describe a transversal to the $\alg$-orbit, it is convenient to use the Euclidean scalar product $ \< -,- \> $ on $\alg=M(n,\RM)$ given by the formula
\[ \< A,B \> :=Tr(A B^T) .\]
Here $-^T$ denotes the transposed matrix.

\begin{lemma} The commutant\index{commutant} 
$$C(A):=\{ B \in M(n,\RM) : [B,A]=0 \} $$
of $A$ is mapped by $B \mapsto B^T$ to the orthogonal space of the $\alg$-orbit:
\[ C(A)^T = (\alg\cdot A)^{\perp} .\]
\end{lemma}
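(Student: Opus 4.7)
The plan is to compute the orthogonal complement of $\alg\cdot A=\{[B,A]:B\in M(n,\RM)\}$ with respect to the given Euclidean pairing and check it coincides with $C(A)^T$. Concretely, a matrix $D$ lies in $(\alg\cdot A)^{\perp}$ iff $\langle [B,A],D\rangle =0$ for every $B\in M(n,\RM)$. Writing $D=C^T$ for some $C$, this becomes
\[
 \operatorname{Tr}\bigl([B,A]\,C\bigr)=0\qquad\text{for every }B\in M(n,\RM).
\]
So I would first reformulate the statement as: $[A,C]=0$ iff $\operatorname{Tr}([B,A]C)=0$ for all $B$.

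The key manipulation is cyclicity of the trace. I would rewrite
\[
 \operatorname{Tr}\bigl([B,A]C\bigr)=\operatorname{Tr}(BAC)-\operatorname{Tr}(ABC)=\operatorname{Tr}(BAC)-\operatorname{Tr}(BCA)=\operatorname{Tr}\bigl(B[A,C]\bigr).
\]
Thus the condition $\langle [B,A],C^T\rangle=0$ for all $B$ becomes $\operatorname{Tr}(B[A,C])=0$ for all $B$.

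The next step is to invoke non-degeneracy of the trace pairing. Since $\operatorname{Tr}(BM)=\langle B,M^T\rangle$ and the Euclidean product is non-degenerate, $\operatorname{Tr}(BM)=0$ for every $B$ forces $M=0$. Applied to $M=[A,C]$, this gives $[A,C]=0$, i.e.\ $C\in C(A)$, i.e.\ $C^T\in C(A)^T$. Conversely, if $C\in C(A)$ the same chain of equalities shows $\langle [B,A],C^T\rangle=0$ for all $B$, so $C^T\in(\alg\cdot A)^{\perp}$. This gives the equality $C(A)^T=(\alg\cdot A)^{\perp}$.

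No step looks like a serious obstacle; the only thing worth being careful about is not confusing the Euclidean pairing $\langle X,Y\rangle=\operatorname{Tr}(XY^T)$ with the bilinear trace form $\operatorname{Tr}(XY)$, which is why the transposition in the statement is needed. As a sanity check one may also note the dimension count: the map $B\mapsto[B,A]$ has kernel $C(A)$, so $\dim\,\alg\cdot A+\dim C(A)=n^2$, which matches $\dim(\alg\cdot A)^{\perp}=n^2-\dim\,\alg\cdot A$ and is consistent with $\dim C(A)^T=\dim C(A)$.
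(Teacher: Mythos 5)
Your proof is correct and follows essentially the same route as the paper: both arguments reduce the orthogonality condition to the trace identity $\operatorname{Tr}([B,A]C)=\operatorname{Tr}(B[A,C])$ (the paper phrases it as $\langle [A,X],B^T\rangle=\langle [B,A],X^T\rangle$) via cyclicity of the trace, and then conclude by non-degeneracy of the trace pairing. Your explicit remark distinguishing $\operatorname{Tr}(XY)$ from $\langle X,Y\rangle=\operatorname{Tr}(XY^T)$, and the dimension-count sanity check, are sound additions but not a different method.
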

\begin{proof} The $\alg$-orbit of $A$ consist of all elements of the form $[A,X]$, where $X \in M(n,\RM)$. Now note the identity
\[
\begin{array}{rcl}
\<  [A,X], B^T \> &=&Tr(AXB)-Tr(XAB)\\
&=&Tr(BAX)-Tr(ABX)= \< [B,A],X^T \> 
\end{array}
\]
So we see that $B^T$ belongs to the orthogonal space of the $\alg$-orbit of $A$
precisely when $B$ commutes with $A$.
\end{proof}

 We compute a local transversal at a matrix $A$ in two extreme cases:\\

{\bf \em {\rm 1)  $A$ is diagonal with distinct eigenvalues}}\\
In this case, the characteristic polynomial has distinct roots.  By transversality, the matrix $A$ admits a neighbourhood in which all matrices share the same property and are therefore all diagonalisable. In terms of group actions: the space of diagonal matrices is a local transversal to the adjoint action at $A$, which indeed is the transpose
of the commutant.
As an example, if
 $$A=\begin{pmatrix} 1 & 0 \\ 0 & 2 \end{pmatrix} $$
 then the space of matrices of the form
 $$D_\l=\begin{pmatrix} 1+\l_1 & 0 \\ 0 & 2+\l_2 \end{pmatrix} $$
is a local transversal. In general it means that $A$ has a neighbourhood $U$ such that
for all $B \in U$ there exits $\l_1,\l_2,\ldots, \l_n$ and $P \in GL(n,\RM)$ with the
property that $PBP^{-1}=D_\l $.

{\bf \em {\rm 2)  $A$ is maximally nilpotent Jordan block.}}
So we let 
\[ A=\begin{pmatrix} 0& 1 & \dots & 0 &0 \\
0& 0 &1 & \dots & 0 \\
 \dots&\dots & \dots &\dots&\dots  \\
 0 &0 &\dots &0 &1 \\
 0 & 0 &\dots & 0 &0 \\  \end{pmatrix} \]

If a matrix $B$ commutes with $A$, $[B,A]=0$, then $B$ preserves the filtration defined by the kernels of the powers of $A$:
$$\{ 0 \}\subset \Ker A \subset \Ker A^2 \subset \dots \Ker A^{n-1} \subset \Ker A^n=\RM^n. $$
and  thus $B$ is seen to be of the form:
$$B=\begin{pmatrix} \l_1 & \l_2 & \l_3 & \dots & \l_{n-1}  & \l_n \\
                                     0 & \l_1 & \l_2 &\dots  &\l_{n-2} & \l_{n-1} \\
                                \dots & \dots&\dots  & \dots  &\dots  & \dots \\
                            \dots & \dots&\dots  & \dots  &\dots  & \dots \\
                              0 &0 &0     & \dots     &\l_1 & \l_2 \\
                                0 &0 &0    & \dots   & 0  & \l_1 \end{pmatrix}=\sum_{i=1}^n \l_i A^{i-1}$$
The transpose of this space is a  transversal to the orbit:
 $$F=\left\{  \sum_{i=1}^n \l_i\, \left(A^{i-1}\right)^\mathrm{T}: \l_1,\dots,\l_n \in \CM \right\}.$$
 For instance, we start with
 $$A=\begin{pmatrix} 0 & 1  \\
 0 &0   \end{pmatrix} $$
 The commutant consists of matrices of the form
 $$\begin{pmatrix} \l_1 & \l_2  \\
 0 &\l_1  \end{pmatrix} $$
 and the orthogonal complement to the orbit consists of matrices of the form
 $$\begin{pmatrix} \l_1 & 0   \\
 \l_2&\l_1  \end{pmatrix} $$
This means that there is a neighbourhood of $A$ such that any matrix can be taken back to the normal form
 $$\begin{pmatrix} \l_1 & 1   \\
 \l_2&\l_1  \end{pmatrix} $$

\section{The Lie iteration in the homogeneous case}
The classical Heron iteration for finding surds and Newton's more general root-finding iteration are quadratically convergent.   
The basic technical tool we will introduce now is an iteration scheme similar (but different) to the Newton iteration in the context of a Lie group action. 
To distinguish it from the Newton iteration, we call it the {\em Lie iteration}.\index{Lie iteration} Its efficiency relies on the following two facts:
\begin{enumerate}[{\rm (1)}]
 \item The Lie group and the Lie algebra agree at first order.
 \item The tangent space to the Lie group is isomorphic to the Lie algebra.
\end{enumerate}
As we shall see, (1) implies that our iteration is quadratic, like for the Newton method and (2) implies that we only need
to linearise the map at the identity, unlike the Newton method which requires global construction of inverses.

Let us first assume that $V$ is infinitesimally $G$-homogeneous at a point $a \in V$. This means that the infinitesimal action of the Lie algebra at $a$
$$\rho:=d\sigma_a:\alg \to V,\ \xi \mapsto \xi(a)$$
is surjective. In this case the implicit function theorem tells us that $V$ is locally $G$-homogeneous: for any $b \in V$ small enough, there exists $g \in G$ such that
$$g(a)=a+b. $$
The following iteration produces a sequence of elements in $G$ which converges rapidly to $g$. It is determined by the choice of a linear map 
 \[j:V  \to \alg\]
that is a right inverse to the infinitesimal action, i.e
$$\rho \circ  j=\Id.$$
and the local isomorphism determined by the exponential map
\[ \textup{exp}: \alg \to G,\;\;\;\xi \mapsto e^{\xi}\,. \]

We start our iteration with:
$$b_0:=b $$
and define 
$$\xi_0:=j(b_0). $$
The element
$$e^{\xi_0} a $$
is close to $a+b$ or equivalently
$$e^{-\xi_0} (a+b) $$
is close to $a$. The error of this approximation is
$$b_1= e^{-\xi_0}(a+b)-a $$
We set $\xi_1:= j(b_1)$ and repeat the process. In this way we get sequences $(b_n),(\xi_n)$ which define the Lie iteration
scheme\index{Lie iteration} in the homogeneous case:\\
\begin{center}
\framebox{ 
$ \begin{matrix} \xi_n&=&j(b_n) \\ 
b_{n+1}&=&e^{-\xi_n}( a+b_n)-a \end{matrix} $}
\end{center}
Note that
$$a+b_{n+1}=e^{-\xi_n} (a+b_n) =e^{-\xi_n}e^{-\xi_{n-1}}(a+b_{n-1}) =\dots=\prod_{i \geq 0}^{n}e^{-\xi_i}(a+b_0).$$

and
\[\xi_n(a)=d\rho_a (\xi_n)=d\rho_a(j(b_n))=b_n\]

  \vskip0.3cm  \begin{figure}[ht]
\includegraphics[width=13cm]{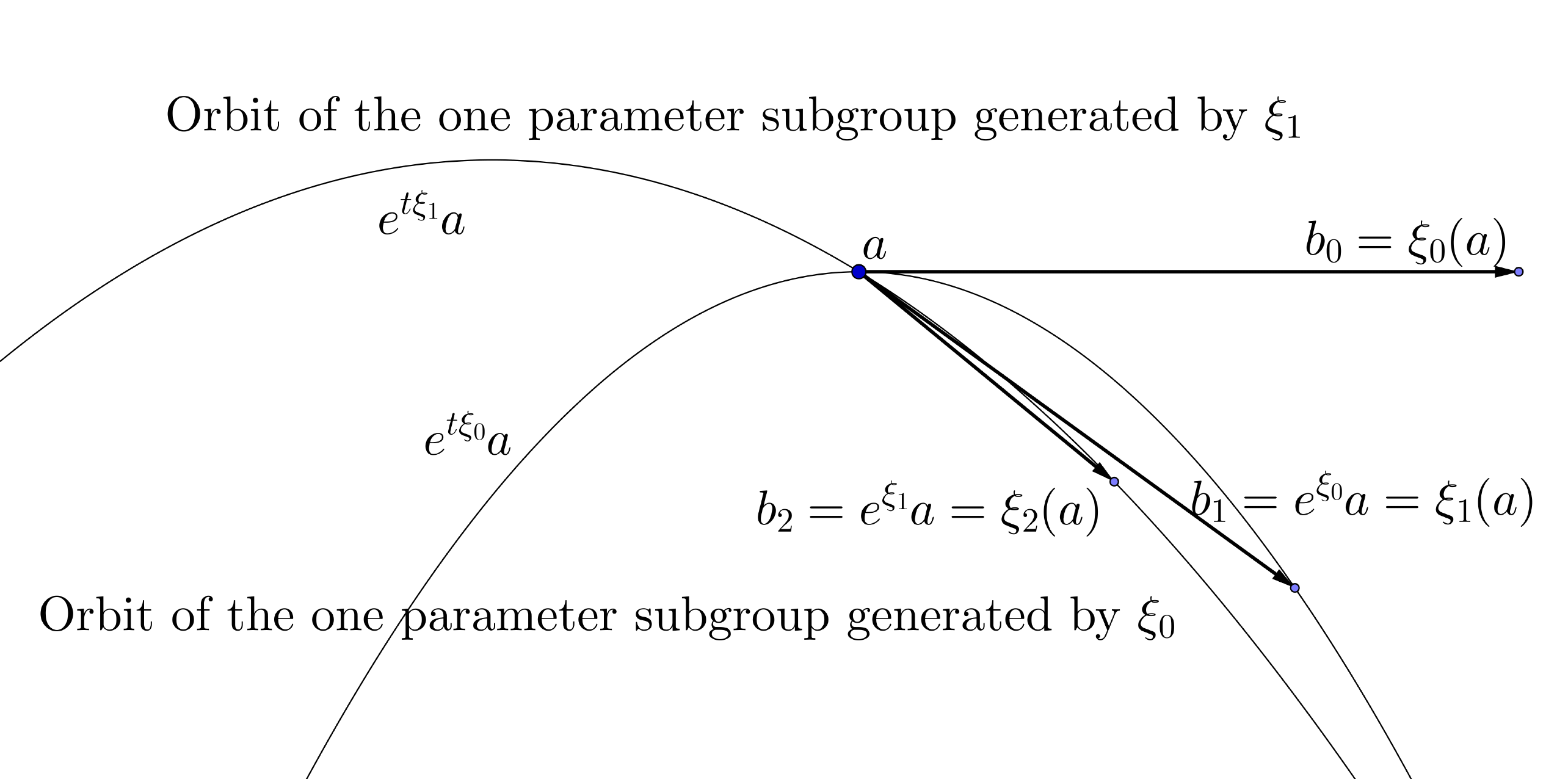}
\end{figure} \vskip0.3cm  

\section{Quadratic convergence of the Lie iteration}

We want to control the rate of convergence of this iteration. 
To do this we use the operator norm:\index{operator norm}
$$\| \xi \|:=\sup_{x \in \RM^n} \frac{\| \xi(x) \|}{\| x \|} .$$
and observe that
 $$\log \| \prod_{i \geq 0}e^{\xi_i} \| \leq \log  \prod_{i \geq 0}e^{\| \xi_i \|}=\sum_{i \geq 0}\| \xi_i \|. $$

 In particular:
 \begin{lemma} Let $(\xi_i)$ be a sequence of operators of a Banach algebra. The infinite product   $\prod_{i \geq 0}e^{\xi_i}$
 is converges in the operator norm provided that the sequence $(\| \xi_i \|)$ is summable.
 \end{lemma}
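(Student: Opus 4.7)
The plan is to show that the partial products $P_N := \prod_{i=0}^{N} e^{\xi_i}$ form a Cauchy sequence in the Banach algebra, which then converges by completeness. The basic estimates I would use throughout are the two standard inequalities valid in any Banach algebra, namely $\|e^{\xi}\| \le e^{\|\xi\|}$ (proved by summing the exponential series term-by-term and using submultiplicativity of the norm) and $\|e^{\xi} - \mathrm{Id}\| \le e^{\|\xi\|} - 1$ (proved the same way, starting the series from $n=1$).

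First I would control the norms of the partial products uniformly. Using submultiplicativity and the first inequality above,
\[
\|P_N\| \;\le\; \prod_{i=0}^{N} \|e^{\xi_i}\| \;\le\; \prod_{i=0}^{N} e^{\|\xi_i\|} \;=\; \exp\!\Bigl(\sum_{i=0}^{N}\|\xi_i\|\Bigr) \;\le\; M,
\]
where $M := \exp(\sum_{i\ge 0}\|\xi_i\|) < \infty$ by the summability hypothesis. Next, for the increments I would write $P_{N+1} - P_N = P_N\,(e^{\xi_{N+1}} - \mathrm{Id})$, which yields
\[
\|P_{N+1} - P_N\| \;\le\; M\,\bigl(e^{\|\xi_{N+1}\|} - 1\bigr).
\]

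Applying the same reasoning to a telescoping sum, for any $N < N'$ one obtains
\[
\|P_{N'} - P_N\| \;\le\; \sum_{k=N}^{N'-1}\|P_{k+1} - P_k\| \;\le\; M\sum_{k=N}^{N'-1}\bigl(e^{\|\xi_{k+1}\|} - 1\bigr).
\]
Since $e^{x} - 1 \le x\,e^{x}$ for $x \ge 0$ and the $\|\xi_i\|$ are summable (so in particular bounded), the tail of the series $\sum_k (e^{\|\xi_{k+1}\|} - 1)$ is dominated by a constant multiple of the tail of $\sum_k \|\xi_{k+1}\|$, which tends to $0$. Hence $(P_N)$ is Cauchy, and by completeness of the Banach algebra it converges in operator norm to some element, which is by definition $\prod_{i \ge 0} e^{\xi_i}$.

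I expect no serious obstacle: the only subtle point is that one must be careful about the order of the factors (the algebra is generally non-commutative), which is why I multiply the telescoping increment $e^{\xi_{N+1}} - \mathrm{Id}$ on the right of $P_N$ rather than rearranging. Everything else is a routine application of the two exponential estimates together with the summability assumption.
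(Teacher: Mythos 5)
Your proof is correct. Note that the paper itself does not really prove the lemma: it only records the preliminary estimate $\log\|\prod_{i\ge 0}e^{\xi_i}\|\le\sum_{i\ge 0}\|\xi_i\|$, which shows the partial products are uniformly bounded but says nothing about their convergence, and then states the lemma. Your argument supplies exactly the missing step: the telescoping identity $P_{N+1}-P_N=P_N(e^{\xi_{N+1}}-\mathrm{Id})$, the bound $\|e^{\xi}-\mathrm{Id}\|\le e^{\|\xi\|}-1\le\|\xi\|e^{\|\xi\|}$, and the uniform bound $\|P_N\|\le M$ combine to show the tails of $\sum_k\|P_{k+1}-P_k\|$ are controlled by the tails of $\sum_k\|\xi_k\|$, so $(P_N)$ is Cauchy and completeness finishes the job. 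This is the standard and intended argument. One cosmetic remark: in the Lie iteration of this chapter the new factor is composed on the \emph{left} (one has $a+b_{n+1}=e^{-\xi_n}(a+b_n)$, so the partial products are $e^{-\xi_n}\cdots e^{-\xi_0}$); with that convention the increment is $(e^{\xi_{N+1}}-\mathrm{Id})P_N$ rather than $P_N(e^{\xi_{N+1}}-\mathrm{Id})$, but by submultiplicativity the resulting bound is identical, so your proof goes through verbatim for either ordering.
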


 The generalisation of this lemma to a more general situation will play a key role in KAM theory. We want to apply the lemma, so we must evaluate
 the rate of convergence of the sequence  $(\xi_n)$. As
 \begin{align*}
 b_n&=\xi_n(a),\\
b_{n+1}&=e^{-\xi_n} (a+b_n)-a,
\end{align*}
we obtain:
$$b_{n+1}=e^{-\xi_n} (a+\xi_n( a))-a.$$
or equivalently:
$$b_{n+1}=(e^{-\xi_n}(\Id+\xi_n)-\Id)( a) .$$
Thus the sequence $(\xi_n)$ is obtained by iteration of the function:
$$F=j \circ f,\ f(x)=e^{-x}(1+x)-1.$$ 
Note that $f$ has a critical point at the origin:
$$f(x)=(1-x)(1+x)-1+o(x^2)=-x^2+o(x^2). $$
In particular, by Taylor's formula, there exists  a neighbourhood of the origin and a constant $C>0$ such that:
$$\| f(\xi) \| \leq C\| \xi \|^2 .$$

We are in the classical situation of a quadratic iteration:
If 
\[\| f(x) \| \leq C \| x \|^2 \]
then 
 \[\| f(f(x)) \| \leq C \| f(x) \|^2 \leq C\cdot C^2 \|x\|^4\]
\[ \| f(f(f(x))) \| \leq C \| f(f(x)) \|^2 \leq C\cdot C^2 \cdot C^4 \|x\|^8\]
so that in general
\[ \| f^{(n)}(x)\| \leq C^{2^{n}-1} \|x\|^{2^n}=\frac{1}{C} (C\|x\|)^{2^n}\]
and where $f^{(n)}$ denotes the $n$-th iterate of $f$.

So we see: 
\begin{theorem} If $\|f(x)\| \leq C \|x\|^2$ and $\|x_0\| < 1/C$, then the sequence of iterates 
\[x_n:=f^{(n)}(x)\]
converges rapidly to zero:\index{rapid convergence}
\[ \|x_n\| \le \frac{1}{C} \rho^{2^n},\;\;\rho:=C \|x_0\| < 1 .\]
\end{theorem}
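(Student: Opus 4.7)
The plan is a straightforward induction on $n$, exploiting the quadratic bound $\|f(x)\| \le C\|x\|^2$ repeatedly. The basic identity to notice is that if one writes $y_n := C\|x_n\|$, then the hypothesis translates into the clean recursion $y_{n+1} \le y_n^2$, so that one iterates squarings of a number $y_0 = \rho < 1$.

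First I would check the base case $n=0$: the desired bound reads $\|x_0\| \le \frac{1}{C}\rho$, which is an equality by the very definition $\rho = C\|x_0\|$. Note that this uses the assumption $\|x_0\| < 1/C$ only to guarantee $\rho < 1$, which is what makes the iteration contractive rather than expansive.

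For the inductive step, suppose $\|x_n\| \le \frac{1}{C}\rho^{2^n}$ has been established. Then applying the hypothesis $\|f(x)\| \le C\|x\|^2$ with $x=x_n$ gives
\[
  \|x_{n+1}\| \;=\; \|f(x_n)\| \;\le\; C\|x_n\|^2 \;\le\; C \cdot \frac{1}{C^2}\,\rho^{2 \cdot 2^{n}} \;=\; \frac{1}{C}\,\rho^{2^{n+1}},
\]
which closes the induction. Since $\rho<1$, the sequence $\rho^{2^n}$ decays doubly exponentially and in particular $\|x_n\|\to 0$.

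There is no real obstacle here: the statement is a mechanical consequence of the quadratic estimate and the hypothesis that the initial datum lies inside the basin $\{\|x\|<1/C\}$ where the scaled quantity $C\|x\|$ is $<1$. The only point worth emphasising after the proof is the sharpness of the rate: each iteration doubles the exponent of $\rho$, which is precisely the hallmark of quadratic (Newton-like) convergence and is what will make the analogous Lie iteration overcome the loss of derivatives in the later KAM arguments, once the elementary Banach-space norm above is replaced by the scale of Kolmogorov norms.
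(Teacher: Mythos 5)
Your proof is correct and is essentially the paper's own argument: the paper unwinds the iteration to get $\|f^{(n)}(x)\|\le C^{2^n-1}\|x\|^{2^n}=\frac{1}{C}(C\|x\|)^{2^n}$, which is exactly your induction written out explicitly. No issues.
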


\begin{theorem}\label{T::groupesfinitedim} Let $V$ be a finite dimensional vector space, 
$G \subset GL(V)$ a group acting linearly on $V$ and  $\alg$ its Lie-algebra,
$a \in V$ a point.
Assume that the map 
\[\rho=d\sigma_a:\alg \to V, \xi \mapsto \xi(a)\] 
admits an inverse $j$.

Then, for any $b \in V$ small enough, the Lie-iteration produces a rapidly convergent sequence $(\xi_n)$ of elements in $\alg$ such that
\[ a =\prod_{i \ge 0} e^{-\xi_i} (a+b) .\]
\end{theorem}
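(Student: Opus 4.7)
The plan is to recognise the Lie iteration as the fixed-point iteration of a map on $\alg$ with a quadratic zero at the origin, and then to combine the quadratic convergence theorem with the operator-product lemma already established in the text.

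First I will rewrite the recursion intrinsically as $\xi_{n+1} = F(\xi_n)$ where
$$F(\xi) := j\bigl((e^{-\xi}(\Id+\xi) - \Id)\,a\bigr).$$
The identification is direct: using $b_n = \rho(\xi_n) = \xi_n(a)$ in the definition $b_{n+1} = e^{-\xi_n}(a + b_n) - a$ gives $b_{n+1} = (e^{-\xi_n}(\Id + \xi_n) - \Id)a$, and applying $j$ yields $\xi_{n+1}$. The crucial analytic input is that $F$ vanishes to order two at $0$. Regarding $\alg \subset \mathrm{End}(V)$ as a subspace of a finite-dimensional Banach algebra equipped with the operator norm, the absolutely convergent exponential series gives $e^{-\xi}(\Id + \xi) - \Id = O(\|\xi\|^2)$ near the origin. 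Since $a$ is fixed and the linear map $j$ is automatically continuous in finite dimension, there exist constants $K, r > 0$ such that
$$\|F(\xi)\| \leq K\|\xi\|^2 \quad \text{for } \|\xi\| \leq r.$$

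Next I will choose $b$ small enough that $\xi_0 = j(b)$ satisfies $\|\xi_0\| \leq r$ and $K\|\xi_0\| < 1$. The quadratic convergence theorem already proved in the text then yields $\|\xi_n\| \leq \frac{1}{K}\rho^{2^n}$ with $\rho := K\|\xi_0\| < 1$, so in particular $\sum_{n\ge 0} \|\xi_n\| < \infty$. By the preceding summability lemma the infinite product $\prod_{i\ge 0} e^{-\xi_i}$ converges in operator norm. Unwinding the recursion telescopically gives
$$\prod_{i=0}^{n} e^{-\xi_i}(a+b) = a + b_{n+1} = a + \xi_{n+1}(a),$$
and the right-hand side tends to $a$ since $\xi_{n+1} \to 0$. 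Passing to the limit yields the claimed identity.

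The only real obstacle is the careful bookkeeping of constants to ensure that the chosen $\xi_0$ lies inside the quadratic neighbourhood and satisfies $K\|\xi_0\| < 1$; after that, everything reduces to the two preparatory results (quadratic convergence and absolute convergence of the operator product) that the text has already established. No infinite-dimensional difficulty enters, which is precisely the point: the finite-dimensional case serves as a clean model for the KAM setting to be developed in the following chapter, where the real work will consist in replacing the operator norm by a scale of norms allowing a similar telescoping to be performed despite the presence of small divisors.
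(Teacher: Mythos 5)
Your proof is correct and follows exactly the route the paper takes: the recursion is recognised as iteration of $F=j\circ f$ with $f(\xi)=(e^{-\xi}(\Id+\xi)-\Id)$ applied to $a$, which is quadratic at the origin, so the quadratic convergence theorem gives summability of $\|\xi_n\|$, the operator-product lemma gives convergence of $\prod e^{-\xi_i}$, and the telescoped identity $a+b_{n+1}=\prod_{i=0}^{n}e^{-\xi_i}(a+b)$ with $b_{n+1}=\xi_{n+1}(a)\to 0$ finishes the argument. Nothing to add.
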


The classical Heron-Newton iteration is also quadratic, but it requires the computation of a inverse to the differential at every new step. In the context of a group action, we need only a {\em single} inverse $j$ to the infinitesimal action. Also note that the quadraticity of the iteration scheme stems from the fact that the Lie-group $G$ and the Lie-algebra $\alg$ 'agree up to first order'.
But it is also important to be aware of the fact that the Lie iteration does neither require the group $G$, nor the fact that $\alg$ is its full Lie-algebra. This is especially important for the applications of the
iteration in infinite dimensional situations, as it is not necessary to construct the complete group as an infinite dimensional
manifold with tangent space $\alg$.  

 \section{The Lie iteration in the general case}
  
We adapt our previous iteration to non-transitive actions. So let $G  \subset GL(V)$ be a Lie group acting naturally on the vector space $V$. Let $F$ be an $\alg$-transversal at some point $a \in V$. By the implicit function theorem, it is also a $G$-transversal. 

Given $b \in V$, we want to find $g \in G$ and $\a \in F$ such that
$$g(a+\a)=a+b$$

To do so, we extend the infinitesimal action
\[ \rho: \alg \to V,\;\;\;\xi \mapsto \xi(a)\] 
to a surjective map
\[\rho_a: F \times \alg \to V, \;\;\;(\a,\xi) \mapsto \xi(a)+\a \]

Let
 $$j_a: V \to F \times \alg$$
be a right inverse to $\rho_a$. 
We also put
\[j: (a+F) \times V \to F \times \alg,\;\;\;(a+\alpha,b) \mapsto j_{a+\alpha}(b) \]

Our problem is now the following: given $b \in V$, we want to find $g \in G$ and $\a \in F$ such that
$$g(a+\a)=a+b$$
To do so, we will modify the original iteration as follows.
First we set $b_0:=b$, $a_0:=a$ and then consider $j(a_0,b_0)=(\a_0,\xi_0)$. Then $\a_0 \in F$ and $\xi_0 \in 
\alg$ are elements such that
\[b_0=\xi_0 (a_0)+\a_0\]
Now the element $e^{\xi_0}(a_0+\a_0)$ will be close to $a_0+b_0$ and we put 

\begin{align*}
a_{1}:=&a_0+\a_0,\\
b_{1}:=&e^{-\xi_0}(a_0+b_0)-a_{1},\\
\end{align*}

 \vskip0.3cm  \begin{figure}[htb!]
  \centering
  \includegraphics[width=13cm]{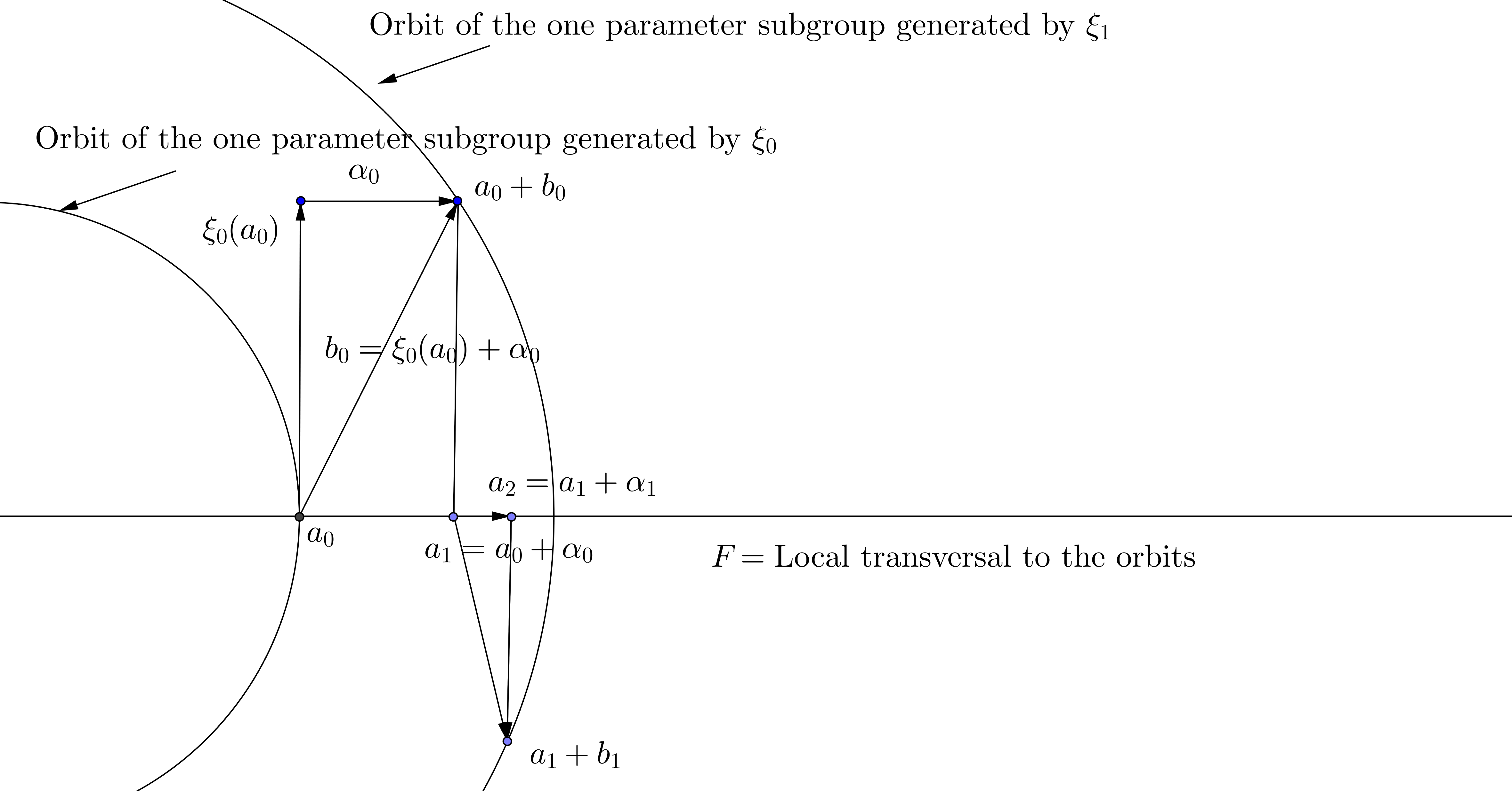}
\end{figure} \vskip0.3cm  

We now can iterate this procedure and obtain sequences $a_n, b_n, \a_n, \xi_n$ defining the Lie iteration\index{Lie iteration} in the general case:

\begin{center}
\framebox{ 
$ \begin{matrix}
a_{n+1}:=&a_n+\a_n,\\
b_{n+1}:=&e^{-\xi_n}(a_n+b_n)-a_{n+1},\\
(\a_{n+1},\xi_{n+1}):=&j(a_{n+1}, b_{n+1})\\ 
\end{matrix}$}
\end{center}

The last equation expresses the fact that at each step we have the decomposition
\[b_{n+1}=\xi_{n+1}(a_{n+1})+ \a_{n+1} \;.\]

Note that, just as in the iteration for the homogeneous case, one has
$$a_{n+1}+b_{n+1}=e^{-\xi_n}(a_n+b_n)=e^{-\xi_{n}}e^{-\xi_{n-1}}(a_{n-1}+b_{n-1})=\dots=\prod_{i \geq 0}^{n}e^{-\xi_i}(a+b)$$

Let us now rewrite this as the iteration of a mapping. If we substitute the relation
\[b_n=\xi_{n}( a_n)+ \a_n \;.\]
in 
$$b_{n+1}=e^{-\xi_n} (a_n+b_n )-a_{n+1},$$
we obtain:
$$b_{n+1}=e^{-\xi_n} (a_n+\xi_n(a_n)+\a_n )-a_{n}-\a_n=\left( e^{-\xi_n} (a_n+\xi_n(a_n))-a_n \right)+\left( e^{-\xi_n}\a_n )-\a_n\right).$$
which can be written as
$$b_{n+1}=A(\xi_n,a_n)+B(\xi_n,\a_n)$$
where
$$A(\xi,a)= (e^{-\xi}(\Id+\xi)-\Id) (a),\ B(\xi,\a)=(e^{-\xi}-\Id) (\a)\;. $$
Note that, for fixed $a$, both $A$ and $B$ have a quadratic singularity at $\xi=0,\ \a=0$.
From this we find
\[(\a_{n+1},\xi_{n+1})=j(a_n+\a_n,A(\xi_n,a_n)+B(\xi_n,\a_n))\]
The right hand side still involves $a_n$, so we write:
\[(a_{n+1},(\a_{n+1},\xi_{n+1}))=(a_n+\a_n,j(a_n+\a_n,A(\xi_n,a_n)+B(\xi_n,\a_n)))\]

The formula is a bit involved, but the only important point is that it is  of the form
$$(x_{n+1},y_{n+1})=(x_n+Ly_n,f(x_n,y_n)) $$
where $L$ is a linear map and $f(x,-)$ is quadratic. 
In our situation, we have 
$x_n=a_n, y_n=(\a_n,\xi_n), Ly_n=\a_n$, 
and, as we observed:
\[y \mapsto f(x,y)=j(x+Ly, A(x,y)+B(y))\] 
is quadratic.

We may adapt the fixed point theorem to this situation:

\begin{theorem} Let $E,F$ be Banach space and let
$$F: B_E \times B_F \to E \times F,\ (x,y) \mapsto (x+Ly,f(x,y))   $$
be such that there exists a constant $C $ with
$$\| f(x,y)\| \leq C\| y\| ^2  $$
for any $x \in B_E$. Let $y_0 \in B_F$ be such that
$$C\|y_0\| \leq 1 \ {\rm\ and\ }\frac{\| L \|}{C} \sum_{n \geq 0} (C\|y_0\|)^{2^n}<1 $$
Then the sequence
 $$(x_n,y_n)=F^n(x,y),\ x_0=0$$
 converges to a limit $(l,0)$  and 
$$\| y_n\| \leq \rho^{2^n},\ \| x_n\| \leq    \frac{\| L \|}{C} \sum_{i=1}^{n-1} \rho^{2^i}$$
with $\rho=C\|y_0\|$.
\end{theorem}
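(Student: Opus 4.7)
The plan is a direct quadratic-iteration estimate, treating the two coordinates separately, with the main work being bookkeeping rather than any genuine analytic difficulty.

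I would first establish by induction the decisive bound on the second coordinate. Setting $\rho := C\|y_0\|$, the recursion $y_{n+1} = f(x_n, y_n)$ combined with the hypothesis $\|f(x,y)\| \le C\|y\|^2$ gives
$$C\|y_{n+1}\| \le (C\|y_n\|)^2,$$
so the induction $C\|y_n\| \le \rho^{2^n}$ is immediate from $C\|y_0\| = \rho$. For the first coordinate, iterating $x_{n+1} = x_n + L y_n$ with $x_0 = 0$ yields the telescoping sum
$$x_n = \sum_{i=0}^{n-1} L y_i, \qquad \|x_n\| \le \frac{\|L\|}{C}\sum_{i=0}^{n-1} \rho^{2^i},$$
which is the second claimed estimate (up to an indexing convention).

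The one subtlety is that the iteration must remain inside $B_E \times B_F$, so that $F$ may legitimately be applied at each step. The two quantitative bounds and the two containments must therefore be proved in a single simultaneous induction: since $\rho \le 1$ one has $\|y_n\| \le \rho^{2^n}/C \le \rho/C = \|y_0\|$, so $y_n$ stays in $B_F$ as $y_0$ does; and the hypothesis
$$\frac{\|L\|}{C}\sum_{n \ge 0} \rho^{2^n} < 1$$
is precisely what forces $\|x_n\| < 1$, ensuring $x_n \in B_E$. This is exactly where the second smallness assumption is used, and it is the only place one appeals to it.

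Convergence then follows from a standard Cauchy argument: the differences $\|x_{n+1} - x_n\| = \|L y_n\| \le (\|L\|/C)\,\rho^{2^n}$ are super-summable, so $(x_n)$ is Cauchy in the Banach space $E$ and converges to some limit $l$, while obviously $\|y_n\| \to 0$. Hence $(x_n, y_n) \to (l, 0)$. The ``hard part'' is essentially notational, namely setting up the joint induction on the three statements in the right order. There is no genuine analytic difficulty, because, unlike in the true KAM situation---where a comparable quadratic bound must be extracted from delicate estimates on a scale of Banach spaces to compensate for loss of regularity---here the quadratic bound $\|f(x,y)\| \le C\|y\|^2$ is taken as a hypothesis on a single pair of spaces.
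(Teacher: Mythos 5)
Your proposal is correct and follows essentially the same route as the paper's proof: the quadratic bound $C\|y_{n+1}\|\le (C\|y_n\|)^2$ by induction, followed by the telescoping estimate $\|x_n\|\le \|L\|\sum_i\|y_i\|$. You are in fact slightly more careful than the paper, which omits both the verification that the iterates remain in $B_E\times B_F$ and the explicit Cauchy argument for the convergence of $(x_n)$; these are worthwhile additions, not deviations.
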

\begin{proof}
The proof is a small variation to the non-parametric case.
As
\[\| f(x,y) \| \leq C \| y \|^2 \]
we get that:
 \[ \| y_n\|=\| f^{(n)}(x_0,y_0)\| \leq C^{2^{n}-1} \|y_0\|^{2^n}=\frac{1}{C} (C\|y_0\|)^{2^n}\]
 and 
 \begin{align*}
 \| x_n\| &\leq \| L \|\, \| y_{n-1} \| +\| x_{n-1}\| \\ 
   & \leq   \| L \|\, \| y_{n-1} \| +\| L \|\, \| y_{n-2} \|+ \| x_{n-2}\| \\
  &\leq \dots \\
  &\leq   \| L \| \sum_{i=1}^{n-1} \| y_{i} \|.
  \end{align*}
\end{proof}

The main point of difference between the homogeneous and the parametric iteration is that 
in the first case we need only a single right inverse to the infinitesimal action at $a$. 
In the parametric situation one needs such a right inverse at the complete sequence of 
points $a=a_0,a_1,\ldots$ lying in the transversal slice. In practise this transversal 
consists of simple modification of our initial problem and is well under control.

\section{Bibliographical Notes}

In his lectures on singularity theory, Arnold used Lie groups actions as a 
preliminary step to the study of hypersurface singularities and their 
deformations. The study of $GL(n,\RM)$ orbits was studied in:\\

\noindent {\sc Arnold V.I.,} {\em On matrices depending on parameters,} Russian Mathematical Surveys {\bf 26}(2), 29-43, (1971).\\ 

Inspired by Kolmogorov's invariant tori paper, the Lie iteration was formulated by Moser in the absolute case in:\\

\noindent  {\sc J. Moser}, {\em A rapidly convergent iteration method and non-linear partial differential equations II}, {Ann. Scuola Norm Sup. Pisa - Classe di Scienze S\'er. 3},{\bf 20}(3), {499-535}, (1966).\\

see also:\\

\noindent {\sc  J. F\'ejoz, J. and M. Garay.}, {\em Un th\'eor\`eme sur les actions de groupes de dimension infinie},
{Comptes Rendus \`a l'Acad\'emie des Sciences}, 348, 427-430, (2010).\\  
{\sc M. Garay}, {\em An Abstract KAM Theorem}, Moscow Math. Journal, Volume 14, Number 4, p.745-772, (2014).\\

Moser does not treat normal forms as an application of his implicit function theorems (now called Nash-Moser theorems). On the contrary, Moser noticed that the requirement of these theorems are in general not fulfilled. When possible application of  implicit function theorems in the presence of small denominators remains a non trivial problem. For instance, following an idea of Herman this has been done by Bost for the case of the Kolmogorov theorem in:\\

\noindent {\sc J.-B. Bost},	{\em Tores invariants des syst\`emes dynamiques hamiltoniens},  {S\'em. Bourbaki 639}, {Ast\'erisque},   {133-134}, p. {113- 157} (1986).


\chapter*{Photo Credits}
{\em J. Moser} by Konrad Jacobs, Erlangen (1969).\\

\end{document}